\newcommand{\RR}{\mathbb{R}}
\newcommand{\SCN}{\mathcal{N}}
\newcommand{\PP}{\mathbb{P}}
\newcommand{\EE}{\mathbb{E}}
\newcommand{\UU}{\mathcal{U}}
\newcommand{\LL}{\text{Law}\,}
\newcommand{\AAA}{\mathbb{A}}
\newcommand{\defeq}{\vcentcolon =}
\newcommand{\from}{\colon}
\newcommand{\frm}{\mathfrak{m}}
\newtheorem{thm}{Theorem}[section]
\newtheorem{lem}[thm]{Lemma}
\newtheorem{prop}[thm]{Proposition}
\theoremstyle{definition}
\newtheorem{ex}[thm]{Example}
\newtheorem{rk}[thm]{Remark}
\newtheorem{asmp}[thm]{Assumption}
\title[Uniform-in-Time Convergence Rates for Mean-Field Interacting Jump Processes]{
Uniform-in-Time Convergence Rates to a Nonlinear Markov Chain for Mean-Field Interacting Jump Processes}\thanks{* This is the final version of the paper. To appear in {\it SIAM Journal on Control and Optimization}}
\author[A. Cohen]{Asaf Cohen }
\address{Department of Mathematics\\
University of Michigan\\
%2854 East Hall, 530 Church Street,\\
Ann Arbor, MI 48109\\
United States
}
\email{shloshim@gmail.com}
\author[E. Huffman]{Ethan Huffman}
\address{Department of Mathematics\\
University of Michigan\\
Ann Arbor, MI 48109\\
United States
}
\email{ehuf@umich.edu}
\begin{document}

\maketitle

\begin{abstract}
	We consider a system of $N$ particles interacting through their empirical distribution on a finite state space in continuous time. In the formal limit as $N\to\infty$, the system takes the form of a nonlinear (McKean--Vlasov) Markov chain. This paper rigorously establishes this limit. Specifically, under the assumption that the mean field system has a unique, exponentially stable stationary distribution, we show that the weak error between the empirical measures of the $N$-particle system and the law of the mean field system is of order $1/N$ uniformly in time.
	Our analysis makes use of a master equation for test functions evaluated along the measure flow of the mean field system, and we demonstrate that the solutions of this master equation are sufficiently regular.
	We then show that exponential stability of the mean field system is implied by exponential stability for solutions of the linearized Kolmogorov equation with a source term.
	Finally, we show that our results can be applied to the study of mean field games and give a new condition for the existence of a unique stationary distribution for a nonlinear Markov chain.

    \vspace{5pt}
    \noindent{\bf Keywords:} Nonlinear Markov chains, exponential ergodicity, mean field games, linearized system

    \noindent{\bf AMS subject classification:} 
    60J27, %: Markov chains with continuous parameter
    4L30, %: Nonlinear ordinary differential operators%60F05, %  	Central limit and other weak theorems
    %60F17, %  	Functional limit theorems; invariance principles
    %60J60, %  	Diffusion processes
    %93E03, %  	Stochastic systems, general60J27  	Continuous-time Markov processes on discrete state spaces
    %60K25, %  	Queueing theory
    %60J65, %	Brownian motion
    %60J70%  	Applications of Brownian motions and diffusion theory (population genetics, absorption problems, etc.)
    %
    %
    % 93E20,  	% Optimal stochastic control
    % 60G10 %\green{(Stationary stochastic processes)}, 
    91A16 %\green{(MFG Game Theory aspect)}, 
    % 49N80 %\green{(Mean field games and control)}
    %
    %60J60, %  	Diffusion processes
    %
    %60F99,  	%Probability theory and stochastic processes , None of the above, but in this section
    %
    %60K25. %  	Queueing theory
    %
    %90B22  	%Queues and service 

\end{abstract}

\section{Introduction}

This paper considers the long-time behavior of a system of $N$ interacting particles on a finite state space $[d] \defeq \{1, \dots d\}$.
The state of each particle $(X^{i,N}_t)_{t\ge0}$, $i\in[N]:=\{1,\ldots,N\}$, evolves by jumping to a new state according to a rate matrix $\alpha(\mu^N_t)=(\alpha_{xy}(\mu^N_t))_{x,y\in[d]}$, where $\mu^N_t$ is the empirical distribution of the system at time $t$.
More precisely, we have
\begin{equation*}
	\PP(X^{i,N}_{t+\varepsilon} = y \mid X^{i,N}_t = x, \mu^N_t = \mu) = \alpha_{xy}(\mu) \varepsilon + o(\varepsilon),\qquad x\ne y.
\end{equation*}
The empirical distribution of the system is a Markov chain which evolves according to
\begin{equation*}
	\PP(\mu^N_{t+\varepsilon} = \mu + \tfrac{1}{N} (\delta_x - \delta_y) \mid \mu^N_t = \mu) = N \mu_x \alpha_{xy}(\mu) \varepsilon + o(\varepsilon),\qquad x\ne y,
\end{equation*}
where $\delta_z$ is the $z$th standard basis vector in $\RR^d$  and $\mu_z$ denotes $\mu(\{ z \})$. In the limit as $N \to \infty$, we expect to have a single representative particle $Y_t$, whose dynamics are given by
\begin{equation*}
	\PP(Y_{t+\varepsilon} = y \mid Y_t = x) = \alpha_{xy}(\LL(Y_t)) \varepsilon + o(\varepsilon),\qquad x\ne y.
\end{equation*}
We refer to this limiting system as the mean field system and notice that $Y_t$ is a \textit{nonlinear Markov chain}, that is, a Markov chain whose transition rates depend on the law of the process.
The law of the mean field system, $(m(t; \mu))_{t \geq 0}$, obeys the (nonlinear forward) Kolmogorov equation
\begin{equation}\label{eq:kolmogorovIntro}
	\frac{d}{dt} m(t; \mu) = m(t; \mu) \alpha(m(t; \mu)), \quad m(0;\mu) = \mu,
\end{equation}
which is posed on the simplex, $S_d$, of probability measures on $[d]$.

The main result of this paper shows that the empirical measures of the $N$-particle system converge to the law of the mean-field system uniformly in time, in a weak sense. This convergence holds when the mean-field system has a unique, exponentially stable stationary distribution, and the derivatives of the mean-field system with respect to initial conditions are uniformly Lipschitz in the measure argument. 
Namely, for any sufficiently regular test function $\Phi \from S_d \to \RR$, we show that
\begin{equation}\label{eq:mainResIntro}
	 \sup_{t \geq 0}\left\lvert \EE \left[ \Phi(\mu^N_t)\right] - \Phi(m(t; \mu))   \right\rvert \leq \frac{C}{N}.
\end{equation}
The prototypical example of $\Phi$ is the squared Euclidean distance, $\lvert \cdot  - \nu_\infty \rvert_2^2$, for $\nu_\infty$ an attracting stationary distribution of \eqref{eq:kolmogorovIntro}  in the case that such a distribution uniquely exists.
Denoting by $\mu^N_\infty$ the stationary distribution of the system $(X^{i,N}_t)_{i \in [N]}$ and letting $t \to \infty$, this $\Phi$ gives us that
\begin{equation*}
    \EE \left[ \lvert \mu^N_\infty - \nu_\infty \rvert_2^2 \right] \leq \frac{C}{N},
\end{equation*}
which recovers Equation (2) in \cite{Ying2018}. Our result is stronger, as it establishes a uniform bound on the distance between the $N$-particle system's distribution and the limiting distribution for all time instances $t$, rather than only in the stationary regime.

Additionally, as it can be difficult to verify the exponential stability of the mean field system, we show that the exponential stability of all solutions to a linear ordinary differential equation, which we call the linearized Kolmogorov equation, implies the exponential stability of the mean field system.
This implication allows us to obtain a new condition for the existence of a unique, exponentially stable stationary distribution for a nonlinear Markov chain which is simple to check and relatively unrestrictive.
In fact, we obtain further that, under these conditions, derivatives of the nonlinear Markov chain with respect to the measure are Lipschitz in their measure argument uniformly in time. 
Finally, we discuss the application of our results to the convergence problem in mean field games, showing that our results apply to an $N$-player game with players using controls derived from the ergodic master equation as defined in \cite{coh-zel2022, coh-zel24}.

\subsection{Background}

    Delarue and Tse recently demonstrated in \cite{del-tse2021} that the empirical measures of a weakly interacting $N$-particle system on the torus converge weakly to the law of a McKean--Vlasov process, with a convergence rate of $1/N$.
    Notably, this convergence is uniform in time.
    In order to show this result, Delarue and Tse use a PDE on the space of probability measures satisfied by the semigroup of the McKean--Vlasov process.
    They call this PDE the \textit{master equation}, which we note is a title applied to many different PDEs in the literature, depending on the context.
    The analysis in \cite{del-tse2021} connects linearized Fokker--Planck type equations to derivatives of the master equation, and applies an ergodicity assumption on the solutions to these linearized equations.
    We follow a similar approach, but in the finite state case it is possible to work with an ergodicity assumption directly on the nonlinear Kolmogorov equation, which we show is weaker than the corresponding assumption on linearized Kolmogorov equations.
    Additionally, we are able to work with less regularity than in the diffusion case, requiring only one Lipschitz continuous derivative for our test functions.

Propagation of chaos is a challenging result to achieve uniformly in time.
Indeed, Malrieu \cite{Malrieu2003} showed by an example of a diffusion on Euclidean space that even when the limiting process has a unique, globally attracting stationary distribution, uniform convergence in time may not hold.
Although there are several results concerning uniform-in-time propagation of chaos for diffusion processes on Euclidean space where the drift term originates from a potential \cite{Malrieu2001, Guillin2023, Rosenzweig2023, Chen2024, Durmus2020, Salem2020}, results beyond this setting remain sparse.
Recently, Lacker and Le Flem \cite{LackerLeFlem2023} extended previous work of Lacker \cite{Lacker2023} and, using the BBGKY hierarchy, established a sharp rate for uniform-in-time convergence in relative entropy for the joint law of the first $k$ of $N$ particles  when the interaction of the particles is pairwise.
More precisely, they showed convergence to the $k$-fold product of the limiting process’s law at rate $(k/N)^2$, which implies convergence in the Wasserstein metric at rate $(k/N)$.

In the finite state case, Ying \cite{Ying2018} used Stein's method to  analyze systems with mean-field particle interactions, showing that the squared distance between the {\it stationary distribution} of the $N$-particle system and that of the mean-field system converges to zero at rate $1/N$.
Budhiraja, Dupuis, Fischer, and Ramanan \cite{Budhiraja2015, Budhiraja2015_2} used the relative entropy of the mean-field system with respect to the $N$-particle system’s stationary distribution as a Lyapunov function, establishing stability for mean-field systems in cases of slow adaptation or locally Gibbs-type interactions.
Going beyond the case of a single, asymptotically stable stationary distribution to models with simple $\omega$-limit sets, Borkar and Sundaresan \cite{bor-son2012} derived a large deviation principle, following the approach of Freidlin and Wentzell \cite{fre-wen2012}, on the simplex for the stationary distributions of the empirical measures of $N$-particle systems.

Nonlinear Markov chains, initially introduced by McKean \cite{McKean1966} in the context of plasma physics, were later studied in general by Kolokoltsov \cite{kolokoltsov2010nonlinear}.
The long-term behavior of nonlinear Markov chains is significantly more complex than that of linear Markov chains, leading to limited results on the uniqueness of stationary distributions and ergodicity.
In discrete time, Butkovsky \cite{Butkovsky2014} showed that an analog of Dobrushin’s condition, under small interactions, ensures a unique, globally attracting stationary distribution.
Saburov \cite{Saburov2016} obtained a similar result using hypermatrices.
Also in discrete time, Light \cite{Light2023} has shown that for nonlinear Markov chains with an aggregator, there are monotonicity conditions that ensure the existence of a unique stationary distribution.
In continuous time, Neumann \cite{Neumann2022} has provided a condition for the existence of a unique stationary distribution that uses the invariant distributions of each $\alpha(\mu)$ separately.
Additionally, Neumann has shown that for $d = 2,3$ there are conditions for a unique stationary distribution to be asymptotically ergodic.

Mean field games (MFGs) were introduced as a limiting model of symmetric $N$-player stochastic differential games by Lasry and Lions \cite{Lasry2007} and independently by Huang, Malham\'e, and Caines \cite{Huang2006}. MFG models approximate Nash equilibria in large-player games, where the concept of a mean field equilibrium serves as the MFG analog of a Nash equilibrium. This equilibrium is defined via a fixed point, which can be characterized either probabilistically or by two differential equations: a forward equation describing the evolution of the measure under equilibrium, and a backward equation for the value function, which is a Hamilton--Jacobi--Bellman equation. The forward-backward system can be encapsulated in a single differential equation, 
sometimes known as the master equation, but called the {\it MFG master equation} in the following to reduce confusion with the master equation of interacting particle systems.

Finite-state MFGs with finite time horizons have been extensively studied, including the existence of the master equation and its application in analyzing the game \cite{bay-coh2017, cec-pel2019, Cohen-Zell-Lauriere}, and adding common noise \cite{Bayraktar2021, Bayraktar2022, Cecchin2021}. Recent work has also focused on the long-term behavior of MFGs. Cardaliaguet and Porretta \cite{car-por2019} studied ergodic MFGs of diffusions on the torus, under the monotonicity condition (interpreted as players seeking to avoid congestion), analyzing the MFG master equation and proving the existence and uniqueness of a weak solution. In the finite-state setting, Gomes, Mohr, and Souza \cite{Gomes2013} introduced the ergodic MFG system and, under contractivity assumptions, demonstrated that it admits a unique solution. Cohen and Zell \cite{coh-zel2022} extended this result by removing the contractivity requirement, assuming monotonicity, and introducing an ergodic MFG master equation for finite-state MFGs, proving both the existence and uniqueness of its solution. In a subsequent work \cite{coh-zel24}, they showed that controls derived from the MFG system or the MFG master equation yield a $(C/\sqrt{N})$-Nash equilibrium for the corresponding $N$-player game.
Recently, H\"ofer and Soner \cite{Hofer2024} considered a two-state game with periodic controls and an anti-monotone cost, demonstrating that the ergodic system can have infinitely many mean field equilibria.

\subsection{Contributions}
In \Cref{sec:mainResult} we introduce our model and main results.
We operate primarily under the ergodicity assumption \textbf{(Erg)}, which states that the mean field system is exponentially stable and has Lipschitz derivatives of its law. 
As this condition is somewhat difficult to verify, we also include the simpler to check assumption \textbf{(Lin-Erg)}, which requires that solutions to the Cauchy problems for the linearized Kolmogorov equations have exponential decay, and later we show that this implies \textbf{(Erg)}.
To illustrate the importance of these assumptions, we provide examples of systems that display counterintuitive behavior in \cref{ex:nonErg} and \cref{ex:slowConv}.

We start by giving rigorous definitions of the dynamics for the $N$-particle system 
\begin{equation*}
  \mathbf{X}^N_t=(X^{1,N}_t,\ldots,X^{N,N}_t),  
\end{equation*}
and the mean field system, $Y_t$, and present our main result, \cref{thm:mainRes}, which for a sufficiently regular test function $\Phi$ on the simplex, gives that \eqref{eq:mainResIntro} holds uniformly in time when the initial distribution of each particle $X^{i,N}_0$ is independent and equal to the initial distribution of the mean field system, which we denote by $\mu$.
In order to prove this result, we introduce in \cref{prop:master} the function $\UU(t, \mu) \defeq \Phi(m(t; \mu))$ and show that it satisfies the \textit{master equation}
\begin{equation*}
	\frac{\partial \UU}{\partial t}(t, \mu) = \sum_{z \in [d]} \frac{\delta \UU}{\delta \frm}(t, \mu, z) (\mu \alpha(\mu))_z, \quad \UU(0, \mu) = \Phi(\mu),
\end{equation*}
 where $\frac{\delta \UU}{\delta \frm}$ is the linear functional derivative we introduce below in \eqref{eq:linFuncDer}.
The proof of \cref{thm:mainRes} consists of two main components. 
Firstly, we show in \cref{prop:firstTermBound} that the error introduced by evaluating $\Phi$ at $m(t; \mu^N_0)$ instead of $\mu^N_t$ is of order $1/N$ uniformly in time.
Intuitively, this is handling the difference between propagating the test function along the measure flow of the mean field system and that of the $N$-particle system.
Secondly, in \cref{prop:secondTermBound}, we show that the error coming from the difference between $m(t; \mu^N_0)$ and $m(t; \mu)$ is also of order $1/N$ uniformly in time.
This step shows that the approximation of the initial distribution of the mean field system by the initial empirical distribution of the $N$-particle system, which is $N$ independent samples from $\mu$, also results in an error of order $1/N$.

\Cref{sec:mainResProof} contains the proof of \cref{thm:mainRes} proceeding by the subresults outlined above.
First, we show the necessary regularity of the solution to the master equation in \cref{prop:master}.
By applying the master equation to a form of It\^o's formula for the $N$-particle system we are able to obtain a tractable expression for the time propagation error in \cref{prop:firstTermExp}.
This expression then allows us to complete the proof of \cref{prop:firstTermBound}.
Finally, to prove \cref{prop:secondTermBound}, we directly express the sampling error in terms of derivatives of the solution of the master equation, and apply the regularity results from \cref{prop:master}.

% \Cref{sec:mainResProof} contains the proof of \cref{thm:mainRes} proceeding by the subresults outlined above.
% First, we show the necessary regularity of the master equation in \cref{prop:master} by applying the chain rule we prove in \Cref{sec:chainRule} to the test function and using our ergodicity assumption.
% We prove a form of It\^o's formula for the $N$-particle system in \cref{lem:ito} and, by expressing the difference terms therein in terms of the linear functional derivative, we are able to apply the master equation to obtain a tractable expression for the time propagation error, see \cref{prop:firstTermExp}.
% Using the exponential decay of solutions to the Kolmogorov equation \eqref{eq:kolmogorov} and their linear functional derivatives, we then complete the proof of \cref{prop:firstTermBound}.
% For \cref{prop:secondTermBound}, the properties of the linear functional derivative are used to express the sampling error in terms of derivatives of the solution of the master equation, the necessary regularity of which is established in \cref{prop:master}.

In \Cref{sec:linErgToErg} we show that the assumption \textbf{(Erg)} used for our main result can be replaced by the assumption \textbf{(Lin-Erg)}.
Specifically, in \cref{prop:mConv}, we show that the exponential stability of all solutions to the linearized Kolmogorov equation, potentially with a source term, implies the exponential stability of the mean field system.
This result leverages the connection between particular solutions of the linearized Kolmogorov equation and derivatives of the nonlinear Kolmogorov equation as well as the properties of the linear functional derivative.

In \Cref{sec:applications} we turn our results towards applications.
\cref{prop:mConv} allows us in \Cref{subsec:nonlinearMarkov} to demonstrate two new and particularly simple conditions for the existence of unique and exponentially stable stationary distributions for nonlinear Markov chains, as well as further regularity of the solution trajectories. 
Then, in \Cref{subsec:MFG}, we show that our results are also applicable to the convergence of finite state large population games with mean field interactions and long-term average (ergodic) costs.
Indeed, the exact conditions which are common in the mean field game literature to ensure the existence of unique mean field equilibria allow us to apply our main result.
Our result extends that of Cohen and Zell \cite{coh-zel24} by showing that propagation of chaos applies to the $N$-player game with controls derived from the master equation uniformly in time, rather than just with the stationary distribution of the mean field system.

We finish the present section with an overview of our notation and some basic definitions and properties that will be used throughout the paper.

\subsection{Notation}

We fix positive integers $d$ and $N$ and use the notation $[d] \defeq \{1, \dots, d\}$ and $[N] \defeq \{1, \dots, N\}$.
Indices $x, y, z$, etc. will take values in $[d]$, while indices $i,j,k$, etc. will generally take values in $[N]$.
 For a vector $v \in \RR^d$, we will typically denote the $x$th component of $v$ by $v_x$, while for a matrix $A \in \RR^{d \times d}$, the notation $A_x$ will be used to denote the $x$th row of $A$.

We denote by $\RR^d_0$ the set of all $v \in \RR^d$ such that $\sum_z v_z = 0$.
The $(d-1)$-simplex considered as a closed subset of $\RR^d$ is denoted by $S_d$, and we identify the space of probability measures on $[d]$ with $S_d$.
For any vector $v \in \RR^d$, we denote by $|v|$ the $L^1$ norm $\sum_{x \in [d]} |v_x|$.

The vector $\delta_z$ is the $z$th standard basis vector in $\RR^d$, which we will often think of as the Dirac measure at $z$, and we let $\delta_{yz} \defeq \delta_z - \delta_y$.

For $\overline{x} = (x_1, \dots, x_N) \in [d]^N$, we denote the empirical measure
\begin{equation*}
	\mu^N_{\overline{x}} \defeq \frac{1}{N} \sum_{i=1}^N \delta_{x_i}.
\end{equation*}
For $z \in [d]$ and $q \in \RR^d$, we denote
\begin{equation*}
	\Delta_z q \defeq (q_y - q_z)_{y \in [d]}.
\end{equation*}
Additionally, for a function $F \from [d]^N \to \RR$, we denote by $\Delta^i F(\mathbf{x})$, the vector of differences of $F$ in the $i$th coordinate, i.e.
\begin{equation*}
	\Delta^i F(\mathbf{x}) \defeq \big( F(x_1, \dots, x_{i-1}, 1, x_{i+1}, \dots, x_N) - F(\mathbf{x}), \dots, F(x_1, \dots, x_{i-1}, d, x_{i+1}, \dots, x_N) - F(\mathbf{x}) \big).
\end{equation*}
Throughout, we will use $C$ to denote a generic constant, which may change from line to line.
Wherever $C$ depends on a parameter we will mention this explicitly.

The space
\[
\tilde{S}_d\defeq\Big\{m \in [0,\infty)^{d-1}: \sum_{z \in [d-1]}m_z \leq 1\Big\}.
\]
is naturally isomorphic to $S^{d}$.
The isomorphism is 
\begin{equation*}
	\pi \from S_d \to \Tilde{S}^d, \quad \pi(m_1, \dots, m_d) := (m_1, \dots, m_{d-1}),
\end{equation*}
which has inverse $\pi^{-1}(m_1, \dots, m_{d-1}) = (m_1, \dots, m_{d-1}, 1 - \sum_{x = 1}^{d-1} m_x)$.
For a function $F \from S_d \to U$, where $U$ is an arbitrary metric space, we define the function $\tilde{F} \from \tilde{S}_d \to U$ by
\begin{equation*}
	\tilde{F}(\tilde{m}) \defeq F(\pi^{-1}(\tilde{m}))
\end{equation*}

In our analysis, it will frequently be necessary to take derivatives of functions defined on $S_d$, for which there are several different, yet related, notions in the literature.
We define two types of derivatives for a function $F \colon S_d \to \RR$.
First, for $y, z \in [d]$ and $\mu \in S_d$, we define:
\begin{equation*}
	D^\frm_{yz}F(\mu) \defeq \lim_{\varepsilon \to 0^+} \frac{F(\mu + \varepsilon \delta_{yz}) - F(\mu)}{\varepsilon},
\end{equation*}
whenever the expression within the limit is valid and the limit exists. 
This derivative is simply the restriction of the directional derivative from $\RR^d$ in the direction of $\delta_{yz}$.
We also define $D^\frm_y \defeq (D^\frm_{yz})_{z \in [d]}$, which we think of as a row vector.
Second, for $z \in [d]$ and $\mu \in S_d$, we define:
\begin{equation}\label{eq:linFuncDer}
	\frac{\delta F}{\delta \frm}(\mu, z) \defeq \lim_{\varepsilon \to 0^+} \frac{F((1-\varepsilon) \mu + \varepsilon \delta_z) - F(\mu)}{\varepsilon},
\end{equation}
also whenever the limit exists.
This second notion of derivative aligns with the linear {\it functional derivative} used in much of the literature dealing with diffusion processes.
A useful feature of the linear functional derivative is that it respects the geometry of the simplex, in the sense that it naturally restricts to the tangent space of the simplex at $\mu$.
By passing to $\Tilde{S}_d$ it is straightforward to see that
\begin{equation}\label{eq:derRel0}
	\frac{\delta F}{\delta \frm}(\mu, z) = D^\frm_1 F(\mu) \cdot (\delta_z - \mu).
\end{equation}
Using \eqref{eq:derRel0}, we can easily see that the linear functional derivative satisfies the normalization condition
\begin{equation}\label{eq:funcDerNorm}
	\sum_{z \in [d]} \frac{\delta F}{\delta \frm}(\mu, z) \mu_z = 0.
\end{equation}

To pass between these concepts of derivatives we use that, by \eqref{eq:derRel0}, we have
\begin{equation}\label{eq:derRel1}
	D^\frm_{yz}F(\mu) = \frac{\delta F}{\delta \frm}(\mu, z) - \frac{\delta F}{\delta \frm}(\mu, y)
\end{equation}
and by \eqref{eq:derRel0} and \eqref{eq:derRel1} we have
\begin{equation}\label{eq:derRel2}
	\sum_{y \in [d]} D^\frm_{yz}F(\mu) \mu_y = \frac{\delta F}{\delta \frm}(\mu, z).
\end{equation}

Taking $\mu_0, \mu_1 \in S_d$, we have the fundamental theorem of calculus:
\begin{equation}\label{eq:differenceIntegral}
	F(\mu_1) - F(\mu_0) = \int_0^1 \sum_{z \in [d]} \frac{\delta F}{\delta \frm}\big((1-\zeta) \mu_0 + \zeta \mu_1, z\big) (\mu_1 - \mu_0)_z \; d\zeta.
\end{equation}

In the case that we have a function $F \from S_d \to \RR^n$, we define the derivative $\frac{\delta F}{\delta \frm}(\mu, z) \in \RR^n$, to be the vector of derivatives of the components of $F$, i.e.
\begin{equation*}
	\left( \frac{\delta F}{\delta \frm}(\mu, z) \right)_i \defeq \frac{\delta F_i}{\delta \frm}(\mu, z).
\end{equation*}
Functions $F \from S_d \to \RR^{n \times n}$ are handled likewise.
In the case that $F$ is a function of the time $t$ as well, the linear functional derivative is defined as above with $t$ fixed, and denoted by $\frac{\delta F}{\delta \frm}(t, \mu, z)$.

\section{The Model and the Main Result}\label{sec:mainResult}

In \Cref{subsec:model}, we introduce the model under consideration, describing the dynamics of the $N$ particles and the dynamics of a nonlinear Markov chain $X_t$ as well as the evolution of its distribution $m(t; \mu)$ according to the nonlinear Kolmogorov equation. 
We also define the linearized Kolmogorov operator $L_m$ and describe the auxiliary Cauchy problems which we use throughout our analysis. 
The assumptions under which we will work are presented in \Cref{subsec:assumptions}.
Finally, in \Cref{subsec:mainResult}, we present our main result, Theorem \ref{thm:mainRes}, which establishes the bound \eqref{eq:mainResIntro}, and give a brief overview of the proof, including introducing the master equation for $\UU(t, \mu)$.

\subsection{Finite State System Setup}\label{subsec:model}

We consider processes with state space $[d]$ for $d > 1$ an integer.
The transition rates are of the form $\alpha \from S_d \to \RR^{d \times d}$, where, for all $\mu \in S_d$, $\alpha_{xy}(\mu) \in [0,\infty)$ for $x \neq y \in [d]$ and $\sum_y \alpha_{xy}(\mu) = 0$.
The notation $\alpha_x(\mu)$ will be used to denote the row of $\alpha(\mu)$ corresponding to $x$.
Occasionally, where notationally convenient, we will write $\alpha_{xy}(\mu)$ as $\alpha_y(x, \mu)$.
We will always assume that $\alpha$ is a continuous function, and denote
\begin{equation*}
    M \defeq \max_{\mu \in S_d} \,  \max_{x \neq y \in [d]} \, \lvert \alpha_{xy}(\mu) \rvert,
\end{equation*}
which is guaranteed to be finite by the compactness of $S_d$.

We rigorously adopt the representation of dynamics introduced by Cecchin and Fischer \cite{Cecchin2017}, in which the dynamics of particle $i$ are described by the following stochastic differential equation:
\begin{equation}
	\label{eq:markov}
	X^{i,N}_t = \xi^i + \int_0^t \int_{\AAA^d} \sum_{y \in [d]} \left( y - X^{i,N}_{s^-} \right) \mathbf{1}_{ \{ u_y \in (0, \alpha_y(X^{i,N}_{s^-}, \mu^N_{s^-})) \} } \SCN^{i, N}(ds, du),
\end{equation}
where the $\xi^i$ are $[d]$-valued random variables, and we abuse notation by abbreviating $\mu^N_s = \mu^N_{\mathbf{X}^N_s}$ for the empirical distribution of $\mathbf{X}^N_s \defeq (X^{1,N}_s, \dots, X^{N,N}_s)$.
The independent Poisson random measures $(\SCN^{i,N})_{i \in [N]}$ have intensity measure $\nu$, defined by
\begin{equation*}
	\nu(E) \defeq \sum_{y \in [d]} \operatorname{Leb}(E \cap \AAA^d_y),
\end{equation*}
where $\AAA^d_y \defeq \{ u \in [0, M]^d \;|\; u_x = 0 \text{ for all } x \neq y \}$, and $\operatorname{Leb}$ is the one-dimensional Lebesgue measure on the set $\{u \in \RR^d \;|\; u_x = 0 \text{ for all } x \neq y \}$.
The element $\alpha_{xy}(\mu)$ gives the transition rate of the process from state $x$ to state $y$ given the distribution $\mu$.

As the number of particles goes to infinity with the $\xi^i$ i.i.d, we expect \eqref{eq:markov} to converge to a McKean--Vlasov process described by
\begin{equation*}
	Y_t = \xi + \int_0^t \int_{\AAA^d} \sum_{z \in [d]} \left( z - Y_s \right) \mathbf{1}_{ \{ u_z \in (0, \alpha_z(Y_s, \LL(Y_s)) \} } \SCN(ds, du).
\end{equation*}

We denote the distribution of $Y_t$ starting from an initial distribution $\mu$ by $m(t; \mu)$, which evolves according to the {\it Kolmogorov equation}:
\begin{equation}\label{eq:kolmogorov}
	\frac{d}{dt} m(t; \mu) = m(t; \mu) \alpha(m(t; \mu)), \quad m(0;\mu) = \mu,
\end{equation}
where here and in the following, all $d$-dimensional vectors are considered as row vectors.

To follow \cite{del-tse2021}, given $\eta \in S_d$ we define the {\it linearized Kolmogorov operator} $L_\eta: \RR^d_0 \to \RR^d_0$
\begin{equation*}
	L_\eta q \defeq \eta \left( \sum_{z \in [d]} \frac{\delta \alpha}{\delta \frm}(\eta, z) q_z \right) + q \alpha(\eta)
\end{equation*}

We will consider Cauchy problems of the form
\begin{equation*}
	\frac{d}{dt} q(t) - L_{m(t; \mu)}q(t) - r(t) = 0,
\end{equation*}
with $q(0) = q_0$ for a \textit{source term} $r \from \RR^+ \to \RR^d_0$.
We denote this problem by \textbf{(Linear-[$\mu, q_0, r$])} and call it the \textit{linearized Kolmogorov equation}.
Note that this has the same form as the original Kolmogorov equation but with the nonlinear term replaced by the linear operator $L_{m(t, \mu)}$ and an additional source term $r$.

\subsection{Assumptions}\label{subsec:assumptions}

Our analysis relies on propagating regularity results from $\alpha$ and test functions to the solutions of the master equation, which we will introduce below.
% In particular, we will assume that several functions are Lipschitz continuous and differentiable with Lipschitz continuous derivative.
Frequently, the regularity that we require is for a function to be Lipschitz continuous with Lipschitz continuous linear functional derivatives.
Thus, we combine these assumptions and make the definition that, for $F \from S_d \to E$ with $E \subset \RR^d$, $F$ lies in \textbf{(D-Lip)} when all components of $F$ are Lipschitz continuous and differentiable with Lipschitz continuous derivatives.
% Throughout the following, we will apply this assumption to the transition rates $\alpha$ in particular.

We introduce two ergodicity assumptions, which will be central to our analysis.
The first of these is quite standard, and in particular, allows for the existence of continuous functions on $S_d$ defined by the solutions of the Kolmogorov equation.

\begin{asmp}
	\textbf{(Erg)}	
	We say that the transition rates function $\alpha$ satisfy \textbf{(Erg)} if there exist constants $\lambda, c_1 > 0$ such that for all $\mu, \hat{\mu} \in S_d$ and $t \geq 0$, we have
	\begin{equation}\label{eq:convergence}
		\lvert m(t; \mu) - m(t; \hat{\mu}) \rvert \leq c_1 e^{-\lambda t} \lvert \mu - \hat{\mu} \rvert,
	\end{equation}
	which we call the \textit{exponential stability} of the Kolmogorov equation, and further we have that $m(t; \mu)$ is differentiable with respect to $\mu$ for all $t \geq 0$ with derivative satisfying
	\begin{equation}\label{eq:derLip}
		\left\lvert \frac{\delta m}{\delta \frm}(t, \mu, z) - \frac{\delta m}{\delta \frm}(t, \hat{\mu}, z) \right\rvert \leq k_1 \lvert \mu - \hat{\mu} \rvert,
	\end{equation}
    for a positive constant $k_1$.
\end{asmp}

\begin{rk}\label{rk:ergNote}
	This assumption implies that if $\alpha$ is in \textbf{(D-Lip)}, there exists a unique stationary solution $\nu_\infty$ of \eqref{eq:kolmogorov} and it is exponentially stable, i.e. for the constants $\lambda > 0$ and $c_1 > 0$ as above, for all $\mu \in S_d$ and $t \geq 0$, we have
	\begin{equation*}
		\lvert m(t; \mu) - \nu_\infty \rvert \leq c_1 e^{-\lambda t} \lvert \mu - \nu_\infty \rvert.
	\end{equation*}

	To show the uniqueness of the stationary measure, we suppose that $\mu_1$ and $\mu_2$ are both stationary measures. 
	We then have that $m(t; \mu_1) = \mu_1$, and $m(t; \mu_2) = \mu_2$ for all $t \geq 0$.
	Then, by \eqref{eq:convergence}, we must have 
	\begin{equation*}
		\lvert \mu_1 - \mu_2 \rvert \leq 2c_1 e^{-\lambda t},
	\end{equation*} for all $t \geq 0$, which clearly implies that $\mu_1 = \mu_2$, so any stationary measure is unique.
	To prove existence, we fix $\mu \in S_d$ and set $\mu_1 = m(s; \mu)$ and $\mu_2 = \mu$ in \eqref{eq:convergence}, which yields
	\begin{equation*}
		\lvert m(t+s; \mu) - m(t; \mu) \rvert \leq 2c_1 e^{-\lambda t},
	\end{equation*}
    since $m(t+s; \mu) = m(s; m(t; \mu))$. So $m(t; \mu)$ must converge to some limit as $t \to \infty$, which we denote by $\nu_\infty$.
	As, by lying in \textbf{(D-Lip)}, $\alpha$ is Lipschitz as a function of $m$, we have by \cite[XIV.3.2]{lang2012real}, that the map $\mu \mapsto m(t; \mu)$ is continuous.
	Thus, rewriting $m(t+s; \mu) = m(s; m(t; \mu))$ and taking $t\to\infty$ we see $m(s; \nu_\infty) = \nu_\infty$, and so $\nu_\infty$ is a stationary measure.
	Finally, taking $\mu_1 = \nu_\infty$ in \eqref{eq:convergence}, which by the above implies $m(t; \mu_1) = \nu_\infty$, we obtain that all $m(t, \mu)$ converge to $\nu_\infty$ exponentially fast.
\end{rk}

The nonlinearity of the Kolmogorov equation makes it difficult to directly verify \textbf{(Erg)}, so we also introduce an assumption that is easier to verify.

\begin{asmp}\textbf{(Lin-Erg)}.
	We say that $\alpha$ satisfies \textbf{(Lin-Erg)} if there exist $\lambda > 0$ and $c_2 \geq 0$, such that for any $\mu \in S_d$, $q_0 \in \RR^d_0$, and $r \from [0, \infty) \to \RR^d_0$ bounded and measurable, the solution $q(t)$ of the Cauchy problem \textbf{(Linear-$[\mu, q_0, r]$)} satisfies
	\begin{equation*}
		\lvert q(t) \rvert \leq c_2 \left[ e^{-\lambda t} \lvert q_0 \rvert + \int_0^t e^{-\lambda(t-s)} \lvert r(s) \rvert ds \right].
	\end{equation*}

\end{asmp}

We will show in \cref{prop:mConv} that \textbf{(Lin-Erg)} implies \textbf{(Erg)}.
Effectively, the relationship between \textbf{(Erg)} and \textbf{(Lin-Erg)} consists of replacing the Kolmogorov equation with all possible tangent equations.
Indeed, we will see that certain solutions of the linearized Kolmogorov equation represent functional derivatives of solutions to the Kolmogorov equation, and we will then integrate these solutions with respect to the initial condition to pass from \textbf{(Lin-Erg)} to \textbf{(Erg)}.

In contrast to linear Markov chains, even when the transition rates of the processes $X^{i,N}_t$ are bounded away from zero, this is insufficient to guarantee \textbf{(Erg)}.
We illustrate this with the following example, similar to that of Neumann \cite{Neumann2022}:

\begin{ex}\label{ex:nonErg}
	Taking $d = 2$, let
	\begin{equation*}
		\alpha(\mu) = 
		\begin{pmatrix}
			-(\mu_1^2+\mu_1+1) & \mu_1^2+\mu_1+1 \\
			31\mu_1^2-18\mu_1+3 & -(31\mu_1^2-18\mu_1+3)
		\end{pmatrix}.	
	\end{equation*}
	Notice that the transition rates are bounded in the interval $[1, 16]$.
	The system defined by this $\alpha$ does not satisfy \textbf{(Erg)}.
    This can be seen by the fact that $\Bar{\mu}_0 = (0.25, 0.75)$, $\Bar{\mu}_1 = (0.5, 0.5)$, and $\Bar{\mu}_2 = (0.75, 0.25)$ are all invariant under the Kolmogorov equation.
	By \cref{prop:mConv} this also implies that $\alpha$ does not satisfy \textbf{(Lin-Erg)}.
\end{ex}

Additionally, even when there is a unique, asymptotically stable stationary distribution, the convergence to this distribution may not be exponential, as in the following example:

\begin{ex}\label{ex:slowConv}
	Let $d = 2$ and let
	\begin{equation*}
		\alpha(\mu) = 
			\begin{pmatrix}
				-(2\mu_1^2+\mu_1+1) & 2\mu_1^2 + \mu_1 + 1 \\
				30\mu_1^2 - 19\mu_1 + 4 & -(30\mu_1^2 - 19\mu_1 + 4)
			\end{pmatrix}.
	\end{equation*}
	Here, there is a unique stationary distribution $\nu_\infty = (0.5, 0.5)$, and the solution to the Kolmogorov equation has the form
	\begin{equation*}
	m_1(t; \mu) = \frac{1}{2} + \frac{\operatorname{sgn}(\mu_1-\tfrac{1}{2})}{2 \sqrt{\tfrac{1}{(1-2\mu_1)^2} + 16t}},
	\end{equation*}
	for $\mu_1 \neq \tfrac{1}{2}$.
	It is clear that all solutions converge to $\nu_\infty$, but the convergence occurs at a rate of $1/\sqrt{t}$, rather than exponentially.
\end{ex}

Indeed, the behavior of a nonlinear Markov chain can be arbitrarily complex, and systems with $d \geq 4$ can even exhibit chaotic behavior, as shown in the example below:

\begin{ex}\label{ex:chaos}
	Letting $d = 4$, the mean field system defined by
	\begin{align*}
		&\alpha(\mu) = \\
		&\resizebox{.99\textwidth}{!}{
		$\begin{pmatrix}
			-(a+\rho+\sigma) - b \mu_2 - \frac{a(1+\beta+a)}{b \mu_1} & a + \rho + \frac{a}{b \mu_1} & b \mu_2 + \frac{a(\beta+a)}{b \mu_1} & \sigma \\
			\sigma & -(1 + \ell + \sigma) - \frac{a(\rho+a)+b^2 \mu_1 \mu_3}{b \mu_2} & \ell & 1 + \frac{a(\rho+a)+b^2 \mu_1 \mu_2}{b \mu_2} \\
			\ell & a & -(\beta + a + \ell) - \frac{a(\mu_1+\mu_2)}{\mu_3} & \beta + \frac{a(\mu_1 + \mu_2)}{\mu_3} \\
			\frac{1}{\mu_4}\left( \mu_1(a + \rho + b \mu_2) + \frac{a(1 + \beta + a)}{b} \right) & \frac{\sigma \mu_2}{\mu_4} & \frac{a \mu_3}{\mu_4} & -\frac{1}{\mu_4}\left( \mu_1(a + \rho + b \mu_2) + \frac{a(1 + \beta + a)}{b} + \sigma \mu_2 + c \mu_3 \right)
		\end{pmatrix}$}
	\end{align*}
    with $\sigma=10$, $\beta=8/3$, $\rho=28$, $a=35$, $b=200$, and $\ell=0.1$  has $\alpha$ in \textbf{(D-Lip)} with transition rates strictly bounded away from zero.
	The first three components of this system are a shifted and scaled version of the Lorenz system:
	\begin{align*}
		\frac{dx}{dt} &= -\sigma x + \sigma y, \\
		\frac{dy}{dt} &= \rho x - y - xz, \\
		\frac{dz}{dt} &= -\beta z + xy,
	\end{align*}
	which is known to exhibit chaotic behavior with the given parameters.

\end{ex}

\subsection{Main Result and the Master Equation}\label{subsec:mainResult}

We are interested in bounding quantities of the form 
\begin{equation*}
	\Big\lvert \EE\left[ \Phi(\mu_t^N) \right] - \Phi(\LL(Y_t)) \Big\rvert,
\end{equation*}
where $\Phi \from S_d \to \RR$ is in \textbf{(D-Lip)} acting as a test function, and $\mu_t^N$ is as in \eqref{eq:markov}.
More precisely, our main result is the following:
\begin{thm}\label{thm:mainRes}
    Assume that $\alpha$ is in \textbf{(D-Lip)} and satisfies \textbf{(Erg)}, and that for all $N \geq 1$, we have that the $\xi^i$ in \eqref{eq:markov} are independent with distribution $\mu_0: = \LL(Y_0)$. 
	Then, there exists a constant $C$, independent of $\mu_0$, such that for any $N \geq 1$ and $\Phi$ in \textbf{(D-Lip)}, we have
	\begin{equation*}
		\sup_{t \geq 0} \Big\lvert \EE\left[ \Phi(\mu_t^N) \right] - \Phi(\LL(Y_t)) \Big\rvert \leq \frac{C}{N}.
	\end{equation*}
\end{thm}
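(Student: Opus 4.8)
The plan is to split the error at time $t$ into a \emph{propagation} part and a \emph{sampling} part via the master equation function $\UU(t,\mu)\defeq\Phi(m(t;\mu))$. Writing
\begin{equation*}
    \EE[\Phi(\mu^N_t)] - \Phi(m(t;\mu_0)) = \big(\EE[\Phi(\mu^N_t)] - \EE[\UU(t,\mu^N_0)]\big) + \big(\EE[\UU(t,\mu^N_0)] - \UU(t,\mu_0)\big),
\end{equation*}
we see that the first bracket measures the difference between running the true $N$-particle dynamics up to time $t$ and instead propagating the initial empirical measure $\mu^N_0$ along the mean-field flow for time $t$, while the second bracket is purely the Monte-Carlo error of approximating $\mu_0$ by $N$ i.i.d.\ samples $\mu^N_0$. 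I would prove these are each $O(1/N)$ uniformly in $t$ as separate propositions (these are \cref{prop:firstTermBound} and \cref{prop:secondTermBound} in the excerpt), and the theorem follows by the triangle inequality.

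For the propagation term, the key is that $\UU(t,\cdot)$ solves the master equation $\partial_t\UU(t,\mu) = \sum_z \tfrac{\delta\UU}{\delta\frm}(t,\mu,z)(\mu\alpha(\mu))_z$ with $\UU(0,\cdot)=\Phi$, and that $\UU$ inherits enough regularity from $\alpha$ and $\Phi$ under \textbf{(D-Lip)} and \textbf{(Erg)} — in particular, Lipschitz-in-$\mu$ first and second functional derivatives with constants \emph{uniform in $t$}, thanks to the exponential decay \eqref{eq:convergence} and the derivative bound \eqref{eq:derLip}; this regularity is exactly \cref{prop:master}. One then applies an It\^o/Dynkin formula for the pure-jump Markov chain $\mu^N_t$ to $\UU(T-t,\mu^N_t)$ (or, more cleanly, writes $\EE[\Phi(\mu^N_T)] - \UU(T,\mu^N_0) = \int_0^T \tfrac{d}{dt}\EE[\UU(T-t,\mu^N_t)]\,dt$). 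The master equation cancels the leading-order generator term, and what remains is the discrepancy between the finite-$N$ generator $\mathcal L^N$ acting on $\mu\mapsto\UU(T-t,\mu)$ and the first-order term $\sum_z \tfrac{\delta\UU}{\delta\frm}(T-t,\mu,z)(\mu\alpha(\mu))_z$; a second-order Taylor expansion in the jump increments $\tfrac{1}{N}\delta_{yz}$, each of size $O(1/N)$ and occurring at rate $O(N)$, produces a net contribution of size $\tfrac{1}{N}\times\|D^2\UU\|_\infty$ per unit time, multiplied by the total rate. The crucial point is that integrating this in $t$ over $[0,\infty)$ still gives $O(1/N)$: this needs the time-decay of the \emph{derivatives} of $\UU(s,\cdot)$ as $s\to\infty$ (the stationary distribution is attracting, so $m(s;\mu)$ and its sensitivities contract), so $\int_0^\infty \|D^2\UU(s,\cdot)\|_\infty\,ds < \infty$. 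I expect this decay-of-derivatives estimate — packaged in \cref{prop:master} and exploited in \cref{prop:firstTermExp} — to be the main obstacle, since it is where \textbf{(Erg)} does its real work.

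For the sampling term, I would use the fundamental theorem of calculus \eqref{eq:differenceIntegral} twice. Since $\mu^N_0 = \tfrac1N\sum_{i=1}^N\delta_{\xi^i}$ with $\xi^i$ i.i.d.\ $\sim\mu_0$, we have $\EE[\mu^N_0] = \mu_0$, so a first-order Taylor expansion of $\UU(t,\cdot)$ around $\mu_0$ has vanishing expectation of the linear term, leaving
\begin{equation*}
    \EE[\UU(t,\mu^N_0)] - \UU(t,\mu_0) = \tfrac12\,\EE\Big[\textstyle\sum_{z,w}\tfrac{\delta^2\UU}{\delta\frm^2}(t,\tilde\mu,z,w)\,(\mu^N_0-\mu_0)_z(\mu^N_0-\mu_0)_w\Big] + (\text{lower order}),
\end{equation*}
and since $\EE|\mu^N_0-\mu_0|^2 = O(1/N)$ by independence, the bound follows provided $\sup_t\|D^2\UU(t,\cdot)\|_\infty < \infty$ — again from \cref{prop:master}. (One must be a little careful with the remainder: either assume/derive a second functional derivative of $\UU$ that is bounded uniformly in $t$, or run a one-step expansion with a Lipschitz first derivative and handle the remainder via $\EE|\mu^N_0-\mu_0|^2$ directly; both routes are available under \textbf{(D-Lip)} plus \textbf{(Erg)}.) Combining the two $O(1/N)$ bounds, taking the supremum over $t\geq 0$, and noting all constants depend only on $d$, $M$, the Lipschitz/regularity constants of $\alpha$ and $\Phi$, and the ergodicity constants $\lambda, c_1, k_1$ — but not on $\mu_0$ or $N$ — yields \cref{thm:mainRes}.
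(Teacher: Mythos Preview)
Your decomposition, use of the master equation for $\UU$, and the It\^o/Dynkin cancellation are exactly the paper's route. Two implementation points are worth flagging, though. First, you repeatedly invoke $\|D^2\UU\|_\infty$, but \textbf{(D-Lip)} on $\Phi$ and $\alpha$ only yields a \emph{Lipschitz} first functional derivative of $\UU$ (this is precisely the content of \cref{prop:master}(1)), not a bounded second derivative; the paper works entirely at that level, writing the post-cancellation remainder in \cref{prop:firstTermExp} as an integral of $D^\frm\UU(t-s,\mu+\tfrac{\zeta}{N}\delta)-D^\frm\UU(t-s,\mu)$ and bounding it via the Lipschitz constant of $\tfrac{\delta\UU}{\delta\frm}$ together with the exponential decay of $D^\frm m$ coming from \textbf{(Erg)}. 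Second, for the sampling term the paper does \emph{not} run a second-order Taylor expansion: instead it writes $\EE[\UU(t,\mu^N_0)]-\UU(t,\mu_0)=\int_0^1\EE\big[\tfrac{\delta\UU}{\delta\frm}(t,\mu^N_\zeta,\tilde\xi_1)-\tfrac{\delta\UU}{\delta\frm}(t,\mu^N_\zeta,\xi_1)\big]d\zeta$ and then swaps $\xi_1$ with an independent copy $\tilde\xi_1$ inside $\mu^N_0$, so that the two terms differ only through a $1/N$ perturbation of the measure argument --- again requiring only the uniform-in-$t$ Lipschitz bound on $\tfrac{\delta\UU}{\delta\frm}$. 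Your ``Lipschitz first derivative'' alternative is essentially this, so your plan goes through; but the $D^2\UU$ route you lead with would need more regularity than the hypotheses supply.
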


We now briefly describe the proof of the of this result, including the key intermediate results.
To approach the problem, we investigate the function $\UU \from [0, \infty) \times S_d \to \RR$ defined by
\begin{equation}\label{eq:Udef}
	\UU(t, \mu) \defeq \Phi(m(t; \mu)).
\end{equation}

We show that $\UU$ has the following properties:
\begin{prop}\label{prop:master}
\leavevmode
	\begin{enumerate}
		\item 
			Assume that $\alpha$ is in \textbf{(D-Lip)} and satisfies \textbf{(Erg)}.
            For $\Phi$ in \textbf{(D-Lip)}, we have that for any $t \geq 0$, $\mu \in S_d$, and $z \in [d]$, $\frac{\delta \UU}{\delta \frm}(t, \mu, z)$ exists and is Lipschitz continuous in $\mu$, uniformly in time.
			
		\item
			Assuming that $\Phi$ is differentiable, $\UU$ satisfies the following PDE, which we call the {\rm master equation}:
			\begin{equation}\label{eq:master}
				\frac{\partial \UU}{\partial t}(t, \mu) = \sum_{z \in [d]} \frac{\delta \UU}{\delta \frm}(t, \mu, z) (\mu \alpha(\mu))_z = D^\frm_1 \UU(t, \mu) (\mu \alpha(\mu))^T,
			\end{equation}
			with the initial condition $\UU(0, \mu) = \Phi(\mu)$.
	\end{enumerate}
\end{prop}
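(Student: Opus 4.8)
The plan is to prove part (1) first; part (2) then follows readily. The structural tool I would isolate at the outset is a \emph{chain rule for linear functional derivatives of compositions}: since $\Phi$ is differentiable and, by \textbf{(Erg)}, $m(t;\cdot)$ admits the linear functional derivative $\frac{\delta m}{\delta\frm}(t,\cdot,z)$, expanding $m(t;(1-\varepsilon)\mu+\varepsilon\delta_z)=m(t;\mu)+\varepsilon\big(\tfrac{\delta m}{\delta\frm}(t,\mu,z)+o(1)\big)$ and invoking differentiability of $\Phi$ gives
\[
	\frac{\delta\UU}{\delta\frm}(t,\mu,z)=\sum_{y\in[d]}\frac{\delta\Phi}{\delta\frm}\big(m(t;\mu),y\big)\Big(\tfrac{\delta m}{\delta\frm}(t,\mu,z)\Big)_y,
\]
which is legitimate because $\frac{\delta m}{\delta\frm}(t,\mu,z)\in\RR^d_0$, being the $\varepsilon$-derivative of a curve in $S_d$. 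This already yields the existence claimed in part (1).

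For the uniform-in-time Lipschitz bound I would add and subtract, writing
\[
	\frac{\delta\UU}{\delta\frm}(t,\mu,z)-\frac{\delta\UU}{\delta\frm}(t,\hat\mu,z)
	=\Big[\tfrac{\delta\Phi}{\delta\frm}(m(t;\mu),\cdot)-\tfrac{\delta\Phi}{\delta\frm}(m(t;\hat\mu),\cdot)\Big]\cdot\tfrac{\delta m}{\delta\frm}(t,\mu,z)
	+\tfrac{\delta\Phi}{\delta\frm}(m(t;\hat\mu),\cdot)\cdot\Big[\tfrac{\delta m}{\delta\frm}(t,\mu,z)-\tfrac{\delta m}{\delta\frm}(t,\hat\mu,z)\Big].
\]
For the first summand: $\frac{\delta\Phi}{\delta\frm}$ is Lipschitz since $\Phi\in\textbf{(D-Lip)}$, the contraction \eqref{eq:convergence} gives $|m(t;\mu)-m(t;\hat\mu)|\le c_1 e^{-\lambda t}|\mu-\hat\mu|$, and $|\tfrac{\delta m}{\delta\frm}(t,\mu,z)|\le c_1 e^{-\lambda t}|\delta_z-\mu|\le 2c_1 e^{-\lambda t}$ — the last bound being immediate from the limit definition of $\frac{\delta m}{\delta\frm}$ together with \eqref{eq:convergence}. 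For the second summand: $\frac{\delta\Phi}{\delta\frm}$ is bounded on the compact set $S_d$, and \eqref{eq:derLip} from \textbf{(Erg)} gives $|\tfrac{\delta m}{\delta\frm}(t,\mu,z)-\tfrac{\delta m}{\delta\frm}(t,\hat\mu,z)|\le k_1|\mu-\hat\mu|$. Both estimates are uniform in $t\ge0$, so the whole expression is bounded by $C|\mu-\hat\mu|$ with $C$ independent of $t$. I expect this to be essentially the only real point: the uniformity comes precisely from the exponential stability in \textbf{(Erg)} (and the decay of $\frac{\delta m}{\delta\frm}$ it forces), whereas a bare Gr\"onwall bound on $m(t;\cdot)$ would only give a Lipschitz constant growing like $e^{Ct}$.

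For part (2), the initial condition is trivial: $\UU(0,\mu)=\Phi(m(0;\mu))=\Phi(\mu)$. For the PDE, $t\mapsto m(t;\mu)$ is $C^1$ since it solves \eqref{eq:kolmogorov}, so $\UU(t,\mu)=\Phi(m(t;\mu))$ is differentiable in $t$ with $\partial_t\UU(t,\mu)=D\Phi(m(t;\mu))\big[\partial_t m(t;\mu)\big]=D\Phi(m(t;\mu))\big[m(t;\mu)\alpha(m(t;\mu))\big]$, where $D\Phi(\nu)[\cdot]$ denotes the (one-sided, at $\partial S_d$) directional derivative, which is well defined here since $\nu\alpha(\nu)$ points into $S_d$. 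To identify the right-hand side with $\sum_z\frac{\delta\UU}{\delta\frm}(t,\mu,z)(\mu\alpha(\mu))_z$, I would use the chain-rule identity above together with linearity of $D\Phi(m(t;\mu))[\cdot]$ to obtain $\sum_z\frac{\delta\UU}{\delta\frm}(t,\mu,z)(\mu\alpha(\mu))_z=D\Phi(m(t;\mu))\big[\sum_z\tfrac{\delta m}{\delta\frm}(t,\mu,z)(\mu\alpha(\mu))_z\big]$, and then close the argument by showing $\sum_z\tfrac{\delta m}{\delta\frm}(t,\mu,z)(\mu\alpha(\mu))_z=\partial_t m(t;\mu)$. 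Since $\mu\alpha(\mu)\in\RR^d_0$, the left-hand side is the directional derivative of $m(t;\cdot)$ at $\mu$ in direction $\mu\alpha(\mu)$ — this is the general fact $\sum_z\frac{\delta F}{\delta\frm}(\mu,z)v_z=D^\frm_1 F(\mu)\cdot v$ for $v\in\RR^d_0$, immediate from \eqref{eq:derRel0} — and, using $m(\varepsilon;\mu)=\mu+\varepsilon\mu\alpha(\mu)+o(\varepsilon)$, the semigroup identity $m(t;m(\varepsilon;\mu))=m(t+\varepsilon;\mu)$, and Lipschitz continuity of $m(t;\cdot)$, this directional derivative equals $\frac{d}{d\varepsilon}\big|_{0}m(t+\varepsilon;\mu)=\partial_t m(t;\mu)$. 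Finally, the second equality in \eqref{eq:master} is just \eqref{eq:derRel0} applied to $\UU$, using $\sum_z(\mu\alpha(\mu))_z=0$.
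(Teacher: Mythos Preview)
Your proof is correct and, for part (1), essentially identical to the paper's: the same chain-rule formula for $\frac{\delta\UU}{\delta\frm}$, the same add-and-subtract decomposition, and the same two applications of \textbf{(Erg)}. (You are in fact slightly more careful than the paper in making explicit the bound $\big|\tfrac{\delta m}{\delta\frm}(t,\mu,z)\big|\le 2c_1 e^{-\lambda t}$ when estimating the first summand.)

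For part (2) the ingredients are the same --- chain rule plus the semigroup property --- but the organization differs. The paper observes that $s\mapsto \UU(t-s,m(s;\mu))$ is constant, differentiates in $s$, expands via the chain rule, and sets $s=0$; the master equation then falls out after rearranging. You instead compute $\partial_t\UU$ and $\sum_z\tfrac{\delta\UU}{\delta\frm}(t,\mu,z)(\mu\alpha(\mu))_z$ separately and match them through the identity $\sum_z\tfrac{\delta m}{\delta\frm}(t,\mu,z)(\mu\alpha(\mu))_z=\partial_t m(t;\mu)$, which you prove directly from the semigroup relation $m(t;m(\varepsilon;\mu))=m(t+\varepsilon;\mu)$ and Lipschitz continuity of $m(t;\cdot)$. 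Your route isolates the geometric content --- that the $\mu$-derivative of the flow in the direction of the vector field equals the $t$-derivative --- as a standalone statement about $m$, whereas the paper's route keeps everything at the level of $\Phi\circ m$ and never states this identity explicitly. Both are equally short; yours is arguably more transparent about what is really being used.
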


To proceed in our proof of the main result, we follow \cite{del-tse2021} and make the following decomposition:
\begin{equation}\label{eq:mainResDecomp}
	\Phi(\mu_t^N) - \Phi(\LL(Y_t)) = \Big( \UU(0, \mu_t^N) - \UU(t, \mu_0^N) \Big) + \Big( \UU(t, \mu_0^N) - \UU(t, \mu_0) \Big).
\end{equation}
We will then bound the expectation of each of the two terms on the right-hand side separately.

For the first term on the right-hand side of \ref{eq:mainResDecomp}, we define the function $U_t \from [0,t] \times [d]^N \to \RR$ for any $t \geq 0$ by
\begin{equation}\label{eq:finDimUdef}
	U_t(s, x_1, \dots, x_N) \defeq \UU\Big(t-s, \frac{1}{N} \sum_{i \in [N]} \delta_{x_i}\Big).
\end{equation}
This allows us to rewrite the first term in \eqref{eq:mainResDecomp} as
\begin{equation*}
	U_t(t, X_t^1, \dots, X_t^N) - U_t(0, X_0^1, \dots, X_0^N),
\end{equation*}
and we have the following result:
\begin{prop}\label{prop:firstTermBound}
	 Assuming that $\alpha$ is in \textbf{(D-Lip)} and satisfies \textbf{(Erg)}, and that $\Phi$ is in \textbf{(D-Lip)}, for all $t \geq 0$, we have that
	\begin{equation*}
		\Big\lvert \EE\left[ U_t(t, \mathbf{X}^N_t) - U_t(0, \mathbf{X}^N_0) \right] \Big\rvert \leq \frac{C}{N},
	\end{equation*}
	where $C$ is independent of $t$, $N$ and the distribution of $\mathbf{X}^N_0$.
\end{prop}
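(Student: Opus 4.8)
The plan is to apply an appropriate Itô/Dynkin formula for the pure-jump Markov chain $\mathbf{X}^N$ to the function $s \mapsto U_t(s, \mathbf{X}^N_s)$ on $[0,t]$, and then exploit the master equation \eqref{eq:master} satisfied by $\UU$ together with the regularity from \cref{prop:master}(1) to show that the resulting integrand is $O(1/N)$ pointwise. Since $U_t(s,\mathbf{x}) = \UU(t-s, \mu^N_{\mathbf{x}})$, the time-derivative term produced by Dynkin's formula is $-\frac{\partial \UU}{\partial t}(t-s, \mu^N_s)$, which by the master equation equals $-\sum_{z}\frac{\delta\UU}{\delta\frm}(t-s,\mu^N_s,z)(\mu^N_s\alpha(\mu^N_s))_z$. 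The generator term, on the other hand, comes from the jump dynamics of the empirical measure: when particle $i$ jumps from $x_i$ to $y$ the empirical measure changes by $\frac{1}{N}\delta_{x_i y}$, and the rate of such a jump is $\mu^N_{x_i}\alpha_{x_i y}(\mu^N)$ summed over $i$, i.e. $\sum_{x}N\mu^N_x\alpha_{xy}(\mu^N)$. So the generator applied to $U_t(s,\cdot)$ at $\mathbf{X}^N_s$ is
\begin{equation*}
\sum_{x\in[d]}\sum_{y\in[d]} N\mu^N_x\alpha_{xy}(\mu^N_s)\Big[\UU\big(t-s,\mu^N_s+\tfrac1N\delta_{xy}\big) - \UU(t-s,\mu^N_s)\Big].
\end{equation*}

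The key step is then a second-order Taylor expansion of $\UU(t-s,\cdot)$ around $\mu^N_s$ in the direction $\frac1N\delta_{xy}$. The first-order term is $\frac1N D^\frm_1\UU(t-s,\mu^N_s)\cdot \delta_{xy}$, which by \eqref{eq:derRel1} equals $\frac1N\big(\frac{\delta\UU}{\delta\frm}(t-s,\mu^N_s,y)-\frac{\delta\UU}{\delta\frm}(t-s,\mu^N_s,x)\big)$; multiplying by the rate $N\mu^N_x\alpha_{xy}(\mu^N_s)$ and summing over $x,y$, using $\sum_x\mu^N_x\alpha_{xy}(\mu^N_s) = (\mu^N_s\alpha(\mu^N_s))_y$ and $\sum_y\alpha_{xy}(\mu^N_s)=0$, recovers exactly $\sum_y\frac{\delta\UU}{\delta\frm}(t-s,\mu^N_s,y)(\mu^N_s\alpha(\mu^N_s))_y$, which cancels the time-derivative term coming from the master equation. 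What remains is the second-order remainder: each term is $O(1/N^2)$ (the displacement $\frac1N\delta_{xy}$ has size $O(1/N)$ in $L^1$, squared gives $O(1/N^2)$), and multiplying by the rate $O(N)$ and summing over the $d^2$ pairs $(x,y)$ leaves a quantity of size $O(1/N)$. The Lipschitz continuity of $\frac{\delta\UU}{\delta\frm}$ in $\mu$ \emph{uniformly in time} (\cref{prop:master}(1)) is exactly what controls this remainder by a constant independent of $t$ and of the time shift $t-s$; this is where uniformity in time enters. One must be slightly careful that $\mu^N_s+\frac1N\delta_{xy}$ stays in $S_d$ — it does, since it is again an empirical measure of $N$ points (or the term has zero rate when $\mu^N_x=0$) — so the one-sided derivatives and the FTC \eqref{eq:differenceIntegral} are legitimately applicable.

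Putting this together: after taking expectations, the martingale part of Dynkin's formula vanishes, and we are left with
\begin{equation*}
\EE\big[U_t(t,\mathbf{X}^N_t) - U_t(0,\mathbf{X}^N_0)\big] = \EE\int_0^t \mathcal{R}^N_s\, ds,
\end{equation*}
where $|\mathcal{R}^N_s|\le C/N$ is \emph{not} quite enough on its own — integrating over $[0,t]$ would give $Ct/N$, which is not uniform in $t$. The resolution, following \cite{del-tse2021}, is that the remainder $\mathcal{R}^N_s$ actually carries the decay of the derivatives of $\UU$: by \cref{prop:master} the second functional derivative bounds should in fact decay like $e^{-\lambda(t-s)}$ (inherited from \textbf{(Erg)} via the chain rule $\UU(t,\mu)=\Phi(m(t;\mu))$ and the exponential contraction \eqref{eq:convergence}), so $|\mathcal{R}^N_s|\le \frac{C}{N}e^{-\lambda(t-s)}$ and $\int_0^t|\mathcal{R}^N_s|\,ds \le \frac{C}{\lambda N}$, uniformly in $t$. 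I expect this last point — extracting the exponential-in-time decay of the relevant second-order quantity from \cref{prop:master}, rather than merely its boundedness — to be the main obstacle, and it is presumably handled in the detailed regularity analysis of \cref{prop:master} (proved via \cref{prop:firstTermExp} in the paper, which should give the explicit tractable expression for $\mathcal{R}^N_s$). The rest is routine: verifying the Dynkin formula applies (bounded rates, finite state space, so no integrability issues), checking the one-sided-derivative Taylor expansion is valid on the simplex, and bookkeeping the constants to confirm $C$ depends only on $d$, $M$, and the Lipschitz constants of $\alpha$ and $\Phi$, not on $t$, $N$, or $\LL(\mathbf{X}^N_0)$.
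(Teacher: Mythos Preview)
Your proposal is correct and follows essentially the same route as the paper: apply It\^o/Dynkin to $U_t(s,\mathbf{X}^N_s)$, use the master equation to cancel the first-order terms, and bound the remaining second-order error (the paper's $\tau$, your $\mathcal{R}^N_s$) by combining the $O(1/N)$ Lipschitz estimate with exponential decay in $t-s$ inherited from \textbf{(Erg)} via the chain rule. You have also correctly identified the main subtle point --- that the remainder must carry the factor $e^{-\lambda(t-s)}$, not merely be uniformly bounded --- and the paper handles this exactly as you anticipate, by expanding $D^\frm\UU$ through the chain rule for $\UU(t,\mu)=\Phi(m(t;\mu))$ and invoking \textbf{(Erg)} directly rather than only the uniform Lipschitz statement of \cref{prop:master}(1).
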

We prove this result by applying the master equation to a form of It\^o's formula, as well as using the regularity results proved for the solutions of the Kolmogorov equation.

Likewise, for the second term on the right-hand side of \eqref{eq:mainResDecomp}, we obtain that
\begin{prop}\label{prop:secondTermBound}
	Under the assumptions of \cref{thm:mainRes}, we have that
	\begin{equation*}
		\Big\lvert \EE\left[ \UU(t, \mu_0^N)\right] - \UU(t, \mu_0)  \Big\rvert \leq \frac{C}{N},
	\end{equation*}
	where $C$ is again independent of $t$, $N$ and $\mu_0:=\text{Law}(Y_0)$.
\end{prop}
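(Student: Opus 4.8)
The plan is to exploit two facts: that $\mu_0^N$ is the empirical measure of $N$ i.i.d.\ draws from $\mu_0$, hence an unbiased estimator of $\mu_0$ with $O(1/N)$ variance; and that $\mu \mapsto \UU(t,\mu)$ has a linear functional derivative which is Lipschitz in $\mu$ with a constant that does not depend on $t$, which is precisely \cref{prop:master}(1). The strategy is a second-order Taylor expansion of $\UU(t,\cdot)$ about $\mu_0$ in which the first-order term vanishes in expectation, and the remainder is controlled by the Lipschitz modulus of $\tfrac{\delta\UU}{\delta\frm}$ times $\EE|\mu_0^N-\mu_0|^2$.

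Concretely, I would first apply the fundamental theorem of calculus \eqref{eq:differenceIntegral} to write
\begin{equation*}
	\UU(t,\mu_0^N) - \UU(t,\mu_0) = \int_0^1 \sum_{z\in[d]} \frac{\delta\UU}{\delta\frm}\big(t, (1-\zeta)\mu_0 + \zeta\mu_0^N, z\big)\,(\mu_0^N-\mu_0)_z\,d\zeta,
\end{equation*}
which is legitimate since $(1-\zeta)\mu_0+\zeta\mu_0^N \in S_d$, and then add and subtract $\tfrac{\delta\UU}{\delta\frm}(t,\mu_0,z)$ in the integrand, splitting the right-hand side into a linear part $\sum_z \tfrac{\delta\UU}{\delta\frm}(t,\mu_0,z)(\mu_0^N-\mu_0)_z$ and a remainder. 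Taking expectations, the linear part vanishes: $\tfrac{\delta\UU}{\delta\frm}(t,\mu_0,z)$ is deterministic, and $\EE[(\mu_0^N)_z] = (\mu_0)_z$ because the $\xi^i$ are i.i.d.\ with law $\mu_0$.

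For the remainder, I would use \cref{prop:master}(1) to obtain a constant $L$, independent of $t$, with $\big|\tfrac{\delta\UU}{\delta\frm}(t,\mu,z) - \tfrac{\delta\UU}{\delta\frm}(t,\hat\mu,z)\big| \le L\,|\mu-\hat\mu|$ for all $\mu,\hat\mu,z$. Since $|(1-\zeta)\mu_0+\zeta\mu_0^N - \mu_0| = \zeta\,|\mu_0^N-\mu_0|$ and $\sum_z|(\mu_0^N-\mu_0)_z| = |\mu_0^N-\mu_0|$ (our norm is the $\ell^1$ norm), the remainder is bounded pointwise by $\int_0^1 L\zeta\,|\mu_0^N-\mu_0|^2\,d\zeta = \tfrac{L}{2}|\mu_0^N-\mu_0|^2$. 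It then remains to estimate $\EE|\mu_0^N-\mu_0|^2$: by Cauchy--Schwarz $|\mu_0^N-\mu_0|^2 \le d\sum_x |(\mu_0^N)_x-(\mu_0)_x|^2$, and for each $x$ the quantity $(\mu_0^N)_x$ is an average of $N$ i.i.d.\ Bernoulli$((\mu_0)_x)$ variables, so $\EE|(\mu_0^N)_x-(\mu_0)_x|^2 = \tfrac{1}{N}(\mu_0)_x\big(1-(\mu_0)_x\big) \le \tfrac{1}{4N}$. Hence $\EE|\mu_0^N-\mu_0|^2 \le \tfrac{d^2}{4N}$, which yields $\big|\EE[\UU(t,\mu_0^N)] - \UU(t,\mu_0)\big| \le \tfrac{Ld^2}{8N}$, with a constant independent of $t$ and of $\mu_0$.

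The only substantive ingredient is the uniform-in-time Lipschitz regularity of $\tfrac{\delta\UU}{\delta\frm}$ coming from \cref{prop:master}(1); once that is available there is no real obstacle, as the argument reduces to an unbiasedness-plus-variance computation for the empirical measure of i.i.d.\ samples. Were one to attempt the bound without \cref{prop:master}(1), the difficulty would lie exactly in controlling the second-order behaviour of $\UU$ uniformly in $t$, which is the reason that regularity statement is established beforehand.
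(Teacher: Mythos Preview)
Your argument is correct. Both your proof and the paper's rely on the same key ingredient, namely the uniform-in-time Lipschitz continuity of $\tfrac{\delta\UU}{\delta\frm}$ from \cref{prop:master}(1), and both begin with the fundamental theorem of calculus \eqref{eq:differenceIntegral}. The difference is in how the first-order contribution is eliminated. You freeze the derivative at the deterministic base point $\mu_0$, so that the linear term has a deterministic coefficient and its expectation vanishes by unbiasedness of $\mu_0^N$; the remainder is then controlled by the Lipschitz constant times the explicit variance bound $\EE|\mu_0^N-\mu_0|^2 \le d^2/(4N)$. The paper instead uses a leave-one-out coupling: it introduces an independent copy $\tilde\xi_1$ of $\xi_1$, observes by exchangeability that $\EE\big[\tfrac{\delta\UU}{\delta\frm}(t,\mu_\zeta^N,\xi_1)\big] = \EE\big[\tfrac{\delta\UU}{\delta\frm}(t,\tilde\mu_\zeta^N,\tilde\xi_1)\big]$ where $\tilde\mu_\zeta^N$ differs from $\mu_\zeta^N$ only through swapping $\xi_1$ for $\tilde\xi_1$, and then bounds the resulting difference directly by the Lipschitz constant times $|\tilde\mu_\zeta^N-\mu_\zeta^N|\le 2/N$. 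Your approach is the more classical second-order Taylor argument and makes the dependence on $d$ explicit; the paper's replacement trick avoids the variance computation altogether and yields the $1/N$ rate almost by inspection. Either route is perfectly adequate here.
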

This result is more straightforward and follows from \cref{prop:master} Part (1), and the general properties of the functional derivative.

Clearly, \cref{prop:firstTermBound} and \cref{prop:secondTermBound} together imply \cref{thm:mainRes}, and so we will work towards proving these two results in \Cref{sec:mainResProof}.

\section{Proof of \texorpdfstring{\cref{thm:mainRes}}{the Main Result}}\label{sec:mainResProof}

We begin by showing our main result under the assumption \textbf{(Erg)}.
First, we prove \cref{prop:master}, showing that $\UU$ is in \textbf{(D-Lip)}, and that $\UU$ solves the master equation.
We will use the regularity of $\UU$ several times in the proofs of \cref{prop:firstTermBound} and \cref{prop:secondTermBound}.
We then prove \cref{prop:firstTermBound} through a form of It\^o's formula, to which we can apply the master equation.
Finally, we complete the proof of our main result by directly proving \cref{prop:secondTermBound}.

\subsection{Proof of \texorpdfstring{\cref{prop:master}}{the properties of the master equation}}

\begin{proof}[Proof of \cref{prop:master}]
\leavevmode

{\bf Proof of part (1):} 
		To show that $\UU$ is differentiable if $\Phi$ is, we apply the chain rule for functional derivatives (\cref{prop:chainRule}), and see that for $\mu \in S_d$ and $z \in [d]$, the derivative of $\UU$ is given by
		\begin{equation}\label{eq:UDer}
			\frac{\delta \UU}{\delta \frm}(t, \mu, z) = \frac{\delta \Phi}{\delta \frm}(m(t; \mu), \cdot) \left( \frac{\delta m}{\delta \frm}(t, \mu, z) \right)^T = D^\frm_1 \Phi(m(t; \mu)) \left( \frac{\delta m}{\delta \frm}(t, \mu, z) \right)^T.
		\end{equation}
		The second equality follows from \eqref{eq:derRel1} and the fact that $\frac{\delta m}{\delta \frm}(t, \mu, z) \in \RR_0^d$ for all $t, \mu, z$.

		We next show that $\frac{\delta \UU}{\delta \frm}$ is Lipschitz continuous in $\mu$ uniformly in time.
		Using the above expression for derivatives of $\UU$, we have that
		\begin{align*}
			\left\lvert \frac{\delta \UU}{\delta \frm}(t, \mu, z) - \frac{\delta \UU}{\delta \frm}(t, \nu, z) \right\rvert &= \left\lvert \frac{\delta \Phi}{\delta \frm}(m(t; \mu), \cdot) \left( \frac{\delta m}{\delta \frm}(t, \mu, z) \right)^T \right. \\
			& \qquad \left. - \frac{\delta \Phi}{\delta \frm}(m(t; \nu), \cdot) \left( \frac{\delta m}{\delta \frm}(t, \nu, z) \right)^T \right\rvert \\
			&= \left\lvert \left( \frac{\delta \Phi}{\delta \frm}(m(t; \mu), \cdot) - \frac{\delta \Phi}{\delta \frm}(m(t; \nu), \cdot) \right) \left( \frac{\delta m}{\delta \frm}(t, \mu, z) \right)^T \right. \\
			& \qquad \left. + \frac{\delta \Phi}{\delta \frm}(t, \nu, \cdot) \left( \frac{\delta m}{\delta \frm}(t, \mu, z) - \frac{\delta m}{\delta \frm}(t, \nu, z) \right)^T \right\rvert.
		\end{align*}
        We recall that we require $\Phi$ and $\frac{\delta \Phi}{\delta \frm}$ to be Lipschitz continuous.
		Then, denoting the Lipschitz constant of $\frac{\delta \Phi}{\delta \frm}$ by $K$, we have that 
		\begin{equation*}
			\left\lvert \left( \frac{\delta \Phi}{\delta \frm}(m(t; \mu), \cdot) - \frac{\delta \Phi}{\delta \frm}(m(t; \nu), \cdot) \right) \left( \frac{\delta m}{\delta \frm}(t, \mu, z) \right)^T \right\rvert \leq K \lvert m(t; \mu) - m(t; \nu) \rvert \leq c_1 K \lvert \mu - \nu \rvert,
		\end{equation*}
		where the last inequality follows from the uniform Lipschitz continuity of $m(t; \mu)$ included in \textbf{(Erg)}.
		Then, denoting the Lipschitz constant of $\Phi$ by $C$, we also have that 
		\begin{equation*}
			\left\lvert \frac{\delta \Phi}{\delta \frm}(t, \nu, \cdot) \left( \frac{\delta m}{\delta \frm}(t, \mu, z) - \frac{\delta m}{\delta \frm}(t, \nu, z) \right)^T \right\rvert \leq C k_1 \lvert \mu - \nu \rvert,
		\end{equation*}
        where we have used the uniform in time Lipschitz continuity of $\frac{\delta m}{\delta \frm}$ assumed in \textbf{(Erg)}.
		By the triangle inequality, this yields the uniform Lipschitz continuity of $\frac{\delta \UU}{\delta \frm}$.

{\bf Proof of part (2):} 	
	The second equality in \eqref{eq:master} follows from \eqref{eq:derRel1} and the fact that $\mu \alpha(\mu) \in \RR^d_0$ for all $\mu \in S_d$. Hence, the rest of the proof is dedicated to the first equality.
	 
		From the semi-group property of $m(t; \mu)$, we have that 
		 \begin{align}\label{eq:semi-group}
		 m(t-s; m(s; \mu)) = m(t; \mu),\qquad t\ge s\ge0.
		 \end{align}
		As $\UU(t, \mu) = \Phi(m(t; \mu))$, we thus obtain that $\UU(t-s, m(s; \mu)) = \UU(t, \mu)$, and so is independent of $s$.
		Expanding $0=\frac{d}{ds}\UU(t-s, m(s; \mu)) $, making use of the chain rule in \Cref{prop:chainRule}, and using \eqref{eq:semi-group}, we get
	\begin{equation}\label{eq:phiexp}
			0 = \frac{d}{ds} \Phi(m(t-s; m(s; \mu))) = \sum_{z \in [d]} \frac{\delta \Phi}{\delta \frm} (m(t; \mu), z) \frac{d}{ds} m_z(t-s; m(s; \mu)).
		\end{equation}
		Then, by the chain rule and \eqref{eq:kolmogorov},
        we have
		\begin{align*}
			\frac{dm_z}{ds}(t-s; m(s; \mu)) &= - \frac{\partial m_z(t-s; m(s;\mu))}{\partial t} + \sum_{y \in [d]} \frac{\delta m_z}{\delta \frm} (t-s, m(s; \mu), y) \frac{dm_y}{ds} (s; \mu) \\
										   &= \sum_{y \in [d]} \left[ -m_y(t; \mu)\alpha_{yz}(m(t; \mu)) + \frac{\delta m_z}{\delta \frm} (t-s, m(s; \mu), y) (m(s; \mu) \alpha_y(m(s; \mu))) \right].
		\end{align*}
		As this must hold for all $s$, we can take $s = 0$ and combine the above with \eqref{eq:phiexp} to get
		\begin{equation*}
			0 = \sum_{z \in [d]} \frac{\delta \Phi}{\delta \frm} (m(t; \mu), z) \left( - m(t; \mu) \alpha(m(t; \mu)) + \frac{\delta m}{\delta \frm} (t, \mu, \cdot) (\mu \alpha(\mu))^T \right)_z.
		\end{equation*}
		Rearranging, we have
		\begin{equation}\label{eq:phifinal}
			\frac{\delta \Phi}{\delta \frm} (m(t; \mu), \cdot) \cdot \left( m(t; \mu) \alpha(m(t; \mu)) \right) = \left( \frac{\delta \Phi}{\delta \frm} (m(t; \mu), \cdot) \frac{\delta m}{\delta \frm} (t, \mu, \cdot) \right) (\mu \alpha(\mu))^T.
		\end{equation}
		Expressing $\UU$ using \eqref{eq:Udef}, and applying the chain rule from \cref{prop:chainRule}, we have
		\begin{equation}
			\frac{\partial \UU}{\partial t} (t, \mu) = \frac{\delta \Phi}{\delta \frm} (m(t; \mu), \cdot) \cdot \left( m(t; \mu) \alpha(m(t; \mu)) \right),
		\end{equation}
		and
		\begin{equation*}
			\frac{\delta \UU}{\delta \frm} (t, \mu, z) = \frac{\delta \Phi}{\delta \frm} (m(t; \mu), \cdot) \cdot \frac{\delta m}{\delta \frm} (t, \mu, z).
		\end{equation*}
		Substituting these into \eqref{eq:phifinal} yields the master equation as desired.
\end{proof}

\subsection{Proof of \texorpdfstring{\cref{prop:firstTermBound}}{the bound on the first term}}

We begin by showing a form of It\^o's formula for $U_t$.

\begin{lem}\label{lem:ito}
	Assuming that $\UU$ is differentiable, such as is the case if $\Phi$ is differentiable, and letting $\mathbf{X}^N_s \defeq \Big( X^{i,N}_s \Big)_{i \in [N]}$ be a vector of processes as in \eqref{eq:markov}, we have
	\begin{equation*}
		\EE\left[ U_T(t, \mathbf{X}^N_t) - U_T(0, \mathbf{X}^N_0) \right] = \EE\left[ \int_0^t \partial_s U_T(s, \mathbf{X}^N_s) ds + \sum_{i \in [N]} \int_0^t \Delta^i U_T(s, \mathbf{X}^N_{s^-}) \alpha(X^{i,N}_{s^-}, \mu^N_{s^-}) ds \right],
	\end{equation*}
     where we recall that we denote the $x$th row of $\alpha(\mu)$ by $\alpha(x, \mu)$.
\end{lem}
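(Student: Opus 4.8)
The plan is to exploit that $\mathbf{X}^N$ is a pure-jump Markov process on the \emph{finite} set $[d]^N$ whose number of jumps on any compact time interval is almost surely finite: the driving measures $\SCN^{i,N}$ have finite total intensity, $\nu(\AAA^d)=\sum_{y}\operatorname{Leb}(\AAA^d_y)=dM$, so the jump times of the whole system are dominated by those of a rate-$NdM$ Poisson process. For such a process one needs no genuine stochastic-calculus machinery: writing the jump times on $[0,t]$ as $0<\tau_1<\dots<\tau_K\le t$ and applying the classical fundamental theorem of calculus to $s\mapsto U_T(s,\mathbf{X}^N_s)$ on each interval between consecutive jumps (where $\mathbf{X}^N_s$ is constant), then adding the jump corrections, gives the pathwise identity
\begin{equation*}
	U_T(t,\mathbf{X}^N_t)-U_T(0,\mathbf{X}^N_0)=\int_0^t \partial_s U_T(s,\mathbf{X}^N_s)\,ds+\sum_{0<s\le t}\big(U_T(s,\mathbf{X}^N_s)-U_T(s,\mathbf{X}^N_{s^-})\big).
\end{equation*}
This uses only that $s\mapsto U_T(s,\mathbf{x})=\UU(T-s,\mu^N_{\mathbf{x}})$ is $C^1$ for each fixed $\mathbf{x}$, which follows from differentiability of $\UU$ in its time variable (as provided by \cref{prop:master}(2)) together with the discreteness of $[d]^N$.

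Next I would express the jump sum through the Poisson random measures. Each jump at time $s$ is produced by a single atom $(s,u)$ of a single $\SCN^{i,N}$ with $u\in\AAA^d_y$ and $u_y\in(0,\alpha_y(X^{i,N}_{s^-},\mu^N_{s^-}))$ for some $y\in[d]$; when this occurs, particle $i$ moves from $X^{i,N}_{s^-}$ to $y$, so $\mathbf{X}^N_s$ agrees with $\mathbf{X}^N_{s^-}$ except in coordinate $i$, which becomes $y$, and the resulting increment of $U_T$ is exactly the $y$-th entry $\big(\Delta^i U_T(s,\mathbf{X}^N_{s^-})\big)_y$. (When $y=X^{i,N}_{s^-}$ the interval $(0,\alpha_y)$ is empty since $\alpha_{xx}(\mu)\le0$, which matches the fact that the $x$-th entry of $\Delta^i U_T$ vanishes when coordinate $i$ equals $x$.) Hence
\begin{equation*}
	\sum_{0<s\le t}\big(U_T(s,\mathbf{X}^N_s)-U_T(s,\mathbf{X}^N_{s^-})\big)=\sum_{i\in[N]}\int_0^t\!\!\int_{\AAA^d}\sum_{y\in[d]}\big(\Delta^i U_T(s,\mathbf{X}^N_{s^-})\big)_y\,\mathbf{1}_{\{u_y\in(0,\alpha_y(X^{i,N}_{s^-},\mu^N_{s^-}))\}}\,\SCN^{i,N}(ds,du).
\end{equation*}

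Finally I would compensate and take expectations. Decomposing $\SCN^{i,N}=\widetilde{\SCN}^{i,N}+ds\,\nu(du)$ with $\widetilde{\SCN}^{i,N}$ the compensated measure, the integrand above is predictable and bounded, because $U_T$ is continuous on the compact set $[0,T]\times[d]^N$ and hence $|\Delta^i U_T|\le2\|U_T\|_\infty$, while $\nu$ is a finite measure; thus each $\widetilde{\SCN}^{i,N}$-integral is a true mean-zero martingale and drops out under $\EE$, and Tonelli lets us interchange $\EE$ with $\int_0^t$. For the compensator part, since $\nu$ restricted to $\AAA^d_y$ is one-dimensional Lebesgue measure in the coordinate $u_y$ and $0\le\alpha_y(x,\mu)\le M$ for $y\ne x$, one gets $\int_{\AAA^d}\mathbf{1}_{\{u_y\in(0,\alpha_y(X^{i,N}_{s^-},\mu^N_{s^-}))\}}\,\nu(du)=\alpha_y(X^{i,N}_{s^-},\mu^N_{s^-})$ for $y\ne X^{i,N}_{s^-}$ and $0$ otherwise; summing over $y$ collapses $\sum_y\big(\Delta^i U_T(s,\mathbf{X}^N_{s^-})\big)_y\,\alpha_y(X^{i,N}_{s^-},\mu^N_{s^-})$ into $\Delta^i U_T(s,\mathbf{X}^N_{s^-})\,\alpha(X^{i,N}_{s^-},\mu^N_{s^-})$, which is the integrand claimed in the lemma.

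The only delicate point is the passage from the pathwise identity to its expectation, namely verifying that the compensated integrals are genuine martingales so they have zero mean; this is painless here since $[d]^N$ is finite, $U_T$ (and hence $\Delta^i U_T$) is bounded, and the jump intensity is finite. Everything else is routine: the between-jumps fundamental theorem of calculus, the identification of increments with entries of $\Delta^i U_T$, and the elementary computation of the compensator. Well-posedness of the SDE \eqref{eq:markov} defining $\mathbf{X}^N$, and the representation of its jumps through the $\SCN^{i,N}$, may be quoted from Cecchin and Fischer \cite{Cecchin2017}.
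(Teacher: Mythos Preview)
Your proof is correct and follows essentially the same route as the paper: a pathwise decomposition into the time integral plus the sum over jumps, rewriting the jump contributions through the Poisson random measures $\SCN^{i,N}$, and then taking expectations so that the compensator produces the rates $\alpha_y(X^{i,N}_{s^-},\mu^N_{s^-})$. The only difference is expository --- you spell out the between-jumps fundamental theorem of calculus and the martingale compensation explicitly, whereas the paper simply invokes the standard It\^o formula for jump processes and then integrates against $\nu$.
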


\begin{proof}
	We first note by the standard form of It\^o's formula for jump processes that
	\begin{equation*}
		U_T(t, \mathbf{X}^N_t) - U_T(0, \mathbf{X}^N_0) = \int_0^t \partial_s U_T(s, \mathbf{X}^N_s) ds + \sum_{s \leq t, \Delta \mathbf{X}^N_s \neq 0} U_T(s, \mathbf{X}^N_s) - U_T(s, \mathbf{X}^N_{s^-}).
	\end{equation*}
	As the jump times of the $X^{i,N}$ are almost surely distinct, this is almost surely equivalent to
	\begin{equation*}
		U_T(t, \mathbf{X}^N_t) - U_T(0, \mathbf{X}^N_0) = \int_0^t \partial_s U_T(s, \mathbf{X}^N_s) ds + \sum_{i \in [N]} \sum_{s \leq t, \Delta X^{i,N}_s \neq 0} U_T(s, [\mathbf{X}^{-i, N}_{s^-}; X^{i,N}_s]) - U_T(s, \mathbf{X}^N_{s^-}).
	\end{equation*}
	Taking expectations and applying \eqref{eq:markov}, we obtain
	\begin{multline*}
		\EE\left[ U_T(t, \mathbf{X}^N_t) - U_T(0, \mathbf{X}^N_0) \right] = \EE\bigg[ \int_0^t \partial_s U_T(s, \mathbf{X}^N_s) ds + \\ 
		+ \sum_{i \in [N]} \int_0^t \int_{\AAA^d} U_T(s, [\mathbf{X}^{-i, N}_{s^-}; X^{i,N}_{s^-} + \sum_{y \in [d]} (y - X_{s^-}) \mathbf{1}_{0 < \xi_y < \alpha_y(X^{i,N}_{s^-}, \mu^N_{s^-})}] ) - U_T(s, \mathbf{X}^N_{s^-}) \nu(d\xi) ds \bigg].
	\end{multline*}
	We can rewrite the inside most integral as
	\begin{equation*}
		\int_{\AAA^d} \sum_{z \in [d]} \left( U_T(s, [\mathbf{X}^{-i, N}_{s^-}; z]) - U_T(s, \mathbf{X}^N_{s^-}) \right) \mathbf{1}_{0 < \xi_z < \alpha_z(X^{i,N}_{s^-}, \mu^N_{s^-})} \nu(d\xi),
	\end{equation*}
	and as the terms involving $U_T$ are independent of $\xi$, this is equal to
	\begin{equation*}
		\sum_{z \in [d]} \left( U_T(s, [\mathbf{X}^{-i, N}_{s^-}; z]) - U_T(s, \mathbf{X}^N_{s^-}) \right) \alpha_z(X^{i,N}_{s^-}, \mu^N_{s^-}).
	\end{equation*}
	Substituting this back into the expectation, we obtain the desired result.
\end{proof}

Applying the master equation to the above form of It\^o's formula will allow us to progress, but in order to do so, we need to express the $\Delta^i U_t$ terms as expressions in the derivatives of $\UU$.

\begin{lem}
	For $U_t$ defined as in \eqref{eq:finDimUdef}, under the assumption of \cref{lem:ito}, we have that
	\begin{equation*}
		(\Delta^i U_t(s, \mathbf{X}^N_s))_z = \frac{1}{N} D^\frm_{X^{i,N}_s z}\UU(t-s, \mu^N_s) + \frac{1}{N} \tau_z(t-s, \mathbf{X_s}, i),
	\end{equation*}
	where
	\begin{equation*}
		\tau_z(t-s, \mathbf{X_s}, i) \defeq \int_0^1 \Big[D^\frm_{X^{i,N}_s z} \UU\Big(t-s, \mu^N_s + \tfrac{\zeta}{N}(\delta_z - \delta_{X^{i,N}_s}) \Big) - D^\frm_{X^{i,N}_s z} \UU(t-s, \mu^N_s)\Big] d\zeta.
	\end{equation*}
\end{lem}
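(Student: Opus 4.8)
The plan is a one-step Taylor expansion of $\UU$ in its measure argument, using the fundamental theorem of calculus \eqref{eq:differenceIntegral}. First I would unwind the definitions. By \eqref{eq:finDimUdef}, replacing the $i$th coordinate of $\mathbf{X}^N_s$ by $z$ changes the empirical measure from $\mu^N_s$ to $\mu^N_s+\tfrac1N(\delta_z-\delta_{X^{i,N}_s})$, so that
\[
(\Delta^i U_t(s,\mathbf{X}^N_s))_z=\UU\Big(t-s,\,\mu^N_s+\tfrac1N(\delta_z-\delta_{X^{i,N}_s})\Big)-\UU(t-s,\mu^N_s).
\]
Since particle $i$ sits at $X^{i,N}_s$, the measure $\mu^N_s$ puts mass at least $1/N$ on $X^{i,N}_s$, hence $\mu^N_s+\tfrac{\zeta}{N}(\delta_z-\delta_{X^{i,N}_s})\in S_d$ for every $\zeta\in[0,1]$; moreover the identity is trivial when $z=X^{i,N}_s$ (both sides vanish), so I may assume $z\neq X^{i,N}_s$.

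Next I would apply \eqref{eq:differenceIntegral} to $F=\UU(t-s,\cdot)$ along this segment, with $\mu_0=\mu^N_s$ and $\mu_1=\mu^N_s+\tfrac1N(\delta_z-\delta_{X^{i,N}_s})$; this is legitimate because $\UU(t-s,\cdot)$ admits a linear functional derivative, which is in fact Lipschitz by Part (1) of \cref{prop:master}. Since $\mu_1-\mu_0=\tfrac1N(\delta_z-\delta_{X^{i,N}_s})$ and $(1-\zeta)\mu_0+\zeta\mu_1=\mu^N_s+\tfrac{\zeta}{N}(\delta_z-\delta_{X^{i,N}_s})$, this yields
\[
(\Delta^i U_t(s,\mathbf{X}^N_s))_z=\frac1N\int_0^1\sum_{w\in[d]}\frac{\delta\UU}{\delta\frm}\Big(t-s,\,\mu^N_s+\tfrac{\zeta}{N}(\delta_z-\delta_{X^{i,N}_s}),\,w\Big)\,(\delta_z-\delta_{X^{i,N}_s})_w\,d\zeta.
\]
The inner sum equals $\frac{\delta\UU}{\delta\frm}(\cdot,z)-\frac{\delta\UU}{\delta\frm}(\cdot,X^{i,N}_s)$, which by the relation \eqref{eq:derRel1} between the two notions of derivative is precisely $D^\frm_{X^{i,N}_s z}\UU(\cdot)$, so
\[
(\Delta^i U_t(s,\mathbf{X}^N_s))_z=\frac1N\int_0^1 D^\frm_{X^{i,N}_s z}\UU\Big(t-s,\,\mu^N_s+\tfrac{\zeta}{N}(\delta_z-\delta_{X^{i,N}_s})\Big)\,d\zeta.
\]

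Finally I would add and subtract $D^\frm_{X^{i,N}_s z}\UU(t-s,\mu^N_s)$ inside the integral: since $\int_0^1 d\zeta=1$, the constant part contributes $\tfrac1N D^\frm_{X^{i,N}_s z}\UU(t-s,\mu^N_s)$ and the remainder is exactly $\tfrac1N\tau_z(t-s,\mathbf{X}_s,i)$ by the definition of $\tau_z$, which is the asserted identity. There is no genuine obstacle here — the statement is an elementary interpolation — and the only points requiring care are checking that the interpolating segment stays inside $S_d$ so that \eqref{eq:differenceIntegral} may be invoked, and collapsing the sum over $w$ correctly via \eqref{eq:derRel1}, i.e. passing between the functional derivative $\frac{\delta\UU}{\delta\frm}$ and the directional derivative $D^\frm$.
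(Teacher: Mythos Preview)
Your proposal is correct and follows exactly the paper's proof: apply \eqref{eq:differenceIntegral} to $\UU(t-s,\cdot)$ along the segment from $\mu^N_s$ to $\mu^N_s+\tfrac1N(\delta_z-\delta_{X^{i,N}_s})$, collapse the sum via \eqref{eq:derRel1}, and add and subtract the constant term. You include a couple of extra sanity checks (the segment stays in $S_d$, the $z=X^{i,N}_s$ case is trivial) that the paper omits, but the argument is identical.
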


\begin{proof}
	By applying \eqref{eq:differenceIntegral}, we obtain that
	\begin{equation*}
		(\Delta^i U_t(s, \mathbf{X}^N_s))_z = \frac{1}{N} \int_0^1 \Big[\sum_{y \in [d]} \frac{\delta \UU}{\delta \frm}\Big(t-s, \mu^N_s + \tfrac{\zeta}{N}(\delta_z - \delta_{X^{i,N}_s}), y\Big) (\delta_z - \delta_{X^{i,N}_s})_y \Big]d\zeta.
	\end{equation*}
	Simplifying, this is equal to
	\begin{equation*}
		\frac{1}{N} \int_0^1 \Big[\frac{\delta \UU}{\delta \frm}\Big(t-s, \mu^N_s + \tfrac{\zeta}{N}(\delta_z - \delta_{X^{i,N}_s}), z\Big) - \frac{\delta \UU}{\delta \frm}\Big(t-s, \mu^N_s + \tfrac{\zeta}{N}(\delta_z - \delta_{X^{i,N}_s}), X^{i,N}_s\Big) \Big]d\zeta,
	\end{equation*}
	which, by \eqref{eq:derRel1}, is equal to
	\begin{equation*}
		\frac{1}{N} \int_0^1 \Big[D^\frm_{X^{i,N}_s z} \UU\Big(t-s, \mu^N_s + \tfrac{\zeta}{N}(\delta_z - \delta_{X^{i,N}_s}) \Big) \Big]d\zeta.
	\end{equation*}
	We add and subtract $D^\frm_{X^{i,N}_s z} \UU(t-s, \mu^N_s)$ to get the desired result.
\end{proof}

Now, we may combine the two preceding lemmas with the master equation, obtaining:

\begin{prop}\label{prop:firstTermExp}
	Again assuming that $\UU$ is differentiable, we have the expression
	\begin{align}\label{eq:mainResP1}\begin{split}
		\EE\left[ U_T(t, \mathbf{X}^N_t) - U_T(0, \mathbf{X}^N_0) \right] &= \EE\bigg[ \int_0^t \frac{1}{N} \sum_{i \in [N]} \tau(T-s, \mathbf{X}^N_{s^-}, i) \alpha(X^{i, N}_{s^-}, \mu^N_{s^-})^T ds \bigg].
    \end{split}
	\end{align}

\end{prop}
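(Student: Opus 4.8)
The plan is to combine the three preceding results—the It\^o-type formula of \cref{lem:ito}, the expansion of the finite-difference operators $\Delta^i U_T$ in terms of $D^\frm U$ plus the remainder $\tau$, and the master equation \eqref{eq:master}—so that the "main term" cancels exactly, leaving only the remainder term on the right-hand side. Concretely, I would start from the conclusion of \cref{lem:ito}, which expresses $\EE[U_T(t,\mathbf{X}^N_t)-U_T(0,\mathbf{X}^N_0)]$ as the expectation of $\int_0^t \partial_s U_T(s,\mathbf{X}^N_s)\,ds$ plus $\sum_{i\in[N]}\int_0^t \Delta^i U_T(s,\mathbf{X}^N_{s^-})\cdot \alpha(X^{i,N}_{s^-},\mu^N_{s^-})\,ds$.

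Next I would substitute the second lemma, writing $(\Delta^i U_T(s,\mathbf{X}^N_s))_z = \tfrac1N D^\frm_{X^{i,N}_s z}\UU(T-s,\mu^N_s) + \tfrac1N \tau_z(T-s,\mathbf{X}^N_s,i)$, into the sum. The $\tau$ part immediately gives the term on the right-hand side of \eqref{eq:mainResP1}, namely $\EE\big[\int_0^t \tfrac1N\sum_i \tau(T-s,\mathbf{X}^N_{s^-},i)\,\alpha(X^{i,N}_{s^-},\mu^N_{s^-})^T\,ds\big]$. So it remains to show that the $D^\frm$ part exactly cancels the $\partial_s U_T$ term. For this I observe that by definition \eqref{eq:finDimUdef}, $\partial_s U_T(s,\mathbf{x}) = -\tfrac{\partial \UU}{\partial t}(T-s,\mu^N_{\mathbf{x}})$, so by the master equation \eqref{eq:master}, $\partial_s U_T(s,\mathbf{X}^N_s) = -\sum_{z}\frac{\delta\UU}{\delta\frm}(T-s,\mu^N_s,z)(\mu^N_s\alpha(\mu^N_s))_z$. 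Meanwhile, using $\mu^N_s = \tfrac1N\sum_i \delta_{X^{i,N}_s}$ and hence $\mu^N_s\alpha(\mu^N_s) = \tfrac1N\sum_i \alpha(X^{i,N}_s,\mu^N_s)$ (the $X^{i,N}_s$-th row), together with \eqref{eq:derRel2} to rewrite $D^\frm_1$ against these rows, I would show $\tfrac1N\sum_i D^\frm_{X^{i,N}_s}\UU(T-s,\mu^N_s)\cdot\alpha(X^{i,N}_s,\mu^N_s)^T = \sum_z \frac{\delta\UU}{\delta\frm}(T-s,\mu^N_s,z)(\mu^N_s\alpha(\mu^N_s))_z$, which is exactly $-\partial_s U_T(s,\mathbf{X}^N_s)$.

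To make the algebraic identity in the last step clean, the key is the relation $\sum_{y} D^\frm_{yz}F(\mu)\mu_y = \tfrac{\delta F}{\delta\frm}(\mu,z)$ from \eqref{eq:derRel2}, or equivalently the second form of the master equation $\partial_t\UU = D^\frm_1\UU\,(\mu\alpha(\mu))^T$. Writing $\mu^N_s\alpha(\mu^N_s)$ as an empirical average over the particles, the row $\alpha(X^{i,N}_s,\mu^N_s)$ lies in $\RR^d_0$, so pairing it with $D^\frm_{X^{i,N}_s}\UU$ is the same as pairing it with $D^\frm_1\UU$ (since $D^\frm_{yz}F - D^\frm_{1z}F$ is independent of $z$ by \eqref{eq:derRel1}, and a vector in $\RR^d_0$ annihilates constants). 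Summing over $i$ and dividing by $N$ reproduces $D^\frm_1\UU(T-s,\mu^N_s)(\mu^N_s\alpha(\mu^N_s))^T$, matching $\tfrac{\partial\UU}{\partial t}$ by the master equation.

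I do not expect a genuine obstacle here: the only subtlety is bookkeeping with the two notions of derivative and the fact that differentiability of $\UU$ (guaranteed by \cref{prop:master} when $\Phi$ is differentiable, which is all this proposition assumes) is exactly what is needed to invoke both the master equation and the finite-difference expansion. The mildly delicate point is justifying that one may pass from $s^-$ to $s$ inside the $ds$-integrals and apply the continuous-time master equation pointwise in $s$; this is handled by noting the integrands differ only on a Lebesgue-null set of jump times, as already used in the proof of \cref{lem:ito}. Assembling these pieces yields \eqref{eq:mainResP1}.
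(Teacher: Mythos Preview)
Your proposal is correct and follows essentially the same approach as the paper: both combine \cref{lem:ito} with the expansion of $\Delta^i U_T$, apply the master equation to rewrite $\partial_s U_T$, use that $D^\frm_{X^{i,N}_s}\UU$ and $D^\frm_1\UU$ agree when paired with the row $\alpha(X^{i,N}_s,\mu^N_s)\in\RR^d_0$, and then recover $\mu^N_s\alpha(\mu^N_s)$ as the empirical average $\tfrac1N\sum_i \alpha(X^{i,N}_s,\mu^N_s)$ to obtain the cancellation. The paper also explicitly notes the $s$ versus $s^-$ issue you flag, disposing of it in the same way.
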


\begin{proof}
	We apply the preceding lemma to It\^o's formula to get
	\begin{align*}
		&\EE\left[ U_T(t, \mathbf{X}^N_t) - U_T(0, \mathbf{X}^N_0) \right] = \\ 
		&\qquad = \EE\bigg[ \int_0^t \partial_s U_T(s, \mathbf{X}^N_s) ds \\
		&\qquad \qquad + \sum_{i \in [N]} \int_0^t \frac{1}{N} \Big( D^\frm_{X^{i,N}_{s^-}}\UU(T-s, \mu^N_{s^-}) + \tau(T-s, \mathbf{X}^N_{s^-}, i) \Big) \alpha(X^{i,N}_{s^-}, \mu^N_{s^-})^T ds \bigg].
	\end{align*}
	We then apply the master equation to the first integral to get that this is equal to
	\begin{align*}
		&\EE\bigg[ \int_0^t -D^\frm_1 \UU(T-s, \mu^N_s)(\mu^N_s \alpha(\mu^N_{s}) )^T \\
		&\qquad + \frac{1}{N} \sum_{i \in [N]}\Big( D^\frm_{X^{i,N}_{s^-}}\UU(T-s, \mu^N_{s^-}) + \tau(T-s, \mathbf{X}^N_{s^-}, i) \Big) \alpha(X^i_{s^-}, \mu^N_{s^-})^T ds \bigg].
	\end{align*}
     As we have that $D^\frm_{X^{i,N}_{s^-} y}\UU(T-s, \mu^N_{s^-}) = D^\frm_{X^{i,N}_{s^-} 1}\UU(T-s, \mu^N_{s^-}) + D^\frm_{1y}\UU(T-s, \mu^N_{s^-})$, we get that
    \begin{align*}
        D^\frm_{X^{i,N}_{s^-}}\UU(T-s, \mu^N_{s^-}) \alpha(X^i_{s^-}, \mu^N_{s^-})^T &= \sum_{y \in [d]} D^\frm_{X^{i,N}_{s^-} 1}\UU(T-s, \mu^N_{s^-})\alpha_y(X^i_{s^-}, \mu^N_{s^-}) \\
        &\qquad + D^\frm_1\UU(T-s, \mu^N_{s^-}) \alpha(X^i_{s^-}, \mu^N_{s^-})^T.
    \end{align*}
	Using that $\alpha_x(\mu) \in \RR^d_0$ for all $x \in [d]$ and $\mu \in S_d$, the first sum vanishes, so 
    \begin{equation*}
        D^\frm_{X^{i,N}_{s^-}}\UU(T-s, \mu^N_{s^-}) \alpha(X^i_{s^-}, \mu^N_{s^-})^T = D^\frm_1\UU(T-s, \mu^N_{s^-}) \alpha(X^i_{s^-}, \mu^N_{s^-})^T.
    \end{equation*}
    
	Then, we note that
	\begin{equation*}
		\sum_{i \in [N]} \frac{1}{N} \alpha(X^{i,N}_{s^-}, \mu^N_{s^-}) = \mu^N_{s^-} \alpha(\mu^N_{s^-}).
	\end{equation*}
	Thus, we have
	\begin{align*}
		\EE\left[ U_T(t, \mathbf{X}^N_t) - U_T(0, \mathbf{X}^N_0) \right] &= \EE\bigg[ \int_0^t\Big\{ D^\frm_1 \UU(T-s, \mu^N_{s^-}) \Big(\mu^N_{s^-} \alpha(\mu^N_{s^-}) \Big)^T \\
		& \qquad\qquad\quad - D^\frm_1 \UU(T-s, \mu^N_s) \Big(\mu^N_s \alpha(\mu^N_s) \Big)^T \\ 
		& \qquad + \frac{1}{N} \sum_{i \in [N]} \tau(T-s, \mathbf{X}^N_{s^-}, i) \alpha(X^{i,N}_{s^-}, \mu^N_{s^-})\Big\} ds \bigg].
	\end{align*}
     Note that the integral of the first two terms is 0. This follows from the fact that the set of jump times of the process $\mu_s^N$ is almost surely discrete. Hence, we obtain \eqref{eq:mainResP1}.
    
\end{proof}

The above representation of the term under consideration for \cref{prop:firstTermBound} allows us to apply the regularity results we have obtained.
Indeed if we can show that the integrand in \eqref{eq:mainResP1} decays sufficiently quickly in $s$ and is of order $1/N$, we will have proven \cref{prop:firstTermBound}.

\begin{proof}[Proof of \cref{prop:firstTermBound}]
    
	We show that the integrand in \eqref{eq:mainResP1} decays sufficiently quickly.
	 Using the chain rule, we can rewrite $\tau_z(t-s, \mathbf{X}^N_{s^-}, i)$ as
	\begin{align*}
		&\int_0^1 \Big[\frac{\delta \Phi}{\delta \frm}\Big(m\big(t-s; \mu^N_{s^-} + \tfrac{\zeta}{N}(\delta_z - \delta_{X^{i,N}_{s^-}})\big), \cdot\Big) \Big( D^\frm_{X^{i,N}_{s^-}z}m\big(t-s; \mu^N_{s^-} + \tfrac{\zeta}{N}(\delta_z - \delta_{X^{i,N}_{s^-}}) \big) \Big)^T \\ 
		&\qquad - \frac{\delta \Phi}{\delta \frm}\big(m(t-s; \mu^N_{s^-}), \cdot\big) \Big( D^\frm_{X^{i,N}_{s^-}z}m(t-s, \mu^N_{s^-}) \Big)^T\Big] d\zeta.
	\end{align*}
	Again using \textbf{(Erg)} and that $\frac{\delta \Phi}{\delta \frm}$ is bounded and Lipschitz yields that there exists $C>0$, independent of $t,N$, and $\text{Law}(\boldsymbol{X}_0)$, such that,
	\begin{equation*}
		\Big\lvert \tau_z(t-s, \mathbf{X}^N_{s^-}, i) \Big\rvert \leq C k_1 e^{-\lambda (t-s)}.
	\end{equation*}
	Combining this with the boundedness of $\alpha$, we obtain the desired decay.

    To conclude, we need that $\tau(t-s, \mathbf{X}^N_{s^-}, i)$ is of order $1/N$, but this follows immediately from the Lipschitz continuity of $\frac{\delta \Phi}{\delta \frm}$ and $D^\frm m$, so we are done.

\end{proof}

\subsection{Proof of \texorpdfstring{\cref{prop:secondTermBound}}{the bound on the second term}}

\begin{proof}[Proof of \cref{prop:secondTermBound}]
	Recall the notation $\mu_0:=\text{Law}(Y_0)$. We first define
	\begin{equation*}
		\mu^N_{\zeta} \defeq \mu^N_0 + \zeta \left( \mu_0 - \mu^N_0 \right),
	\end{equation*}
	then, by \eqref{eq:differenceIntegral}, we have that
	\begin{equation*}
		 \UU(t, \mu_0) - \UU(t, \mu_0^N) = \int_0^1 \sum_{z \in [d]} \frac{\delta \UU}{\delta \frm}(t, \mu_{\zeta}^N, z) (\mu_0 - \mu_0^N)_z d\zeta.
	\end{equation*}
	Defining $\Tilde{\xi}_1$ to be a random variable with law $\mu_0$ independent of $\xi_1, \dots, \xi_N$, the initial conditions of the $N$ processes and taking expectations, we obtain
	\begin{equation*}
		\EE\left[ \UU(t, \mu_0) - \UU(t, \mu_0^N) \right] = \int_0^1 \EE\left[ \frac{\delta \UU}{\delta \frm}(t, \mu_{\zeta}^N, \Tilde{\xi}_1) - \frac{1}{N} \sum_{i \in N} \frac{\delta \UU}{\delta \frm}(t, \mu_{\zeta}^N, \xi_i) \right] d\zeta,
	\end{equation*}
	which, by our assumption for \cref{thm:mainRes} that the $\{\xi_i\}_{i \in [N]}$ are identically distributed according to $\mu_0$, is equal to
	\begin{equation*}
		\int_0^1 \EE\left[ \frac{\delta \UU}{\delta \frm}(t, \mu_{\zeta}^N, \Tilde{\xi}_1) - \frac{\delta \UU}{\delta \frm}(t, \mu_{\zeta}^N, \xi_1) \right] d\zeta.
	\end{equation*}
	Defining $\Tilde{\mu}_0^N \defeq \mu_0^N + \frac{1}{N}(\Tilde{\xi}_1 - \xi_1)$, and $\Tilde{\mu}_{\zeta}^N \defeq \Tilde{\mu}_0^N + \zeta (\mu_0 - \Tilde{\mu}_0^N)$, we see that
	\begin{equation*}
		\EE\left[ \frac{\delta \UU}{\delta \frm}(t, \mu_{\zeta}^N, \xi_1) \right] = \EE\left[ \frac{\delta \UU}{\delta \frm}(t, \Tilde{\mu}_{\zeta}^N, \Tilde{\xi}_1) \right],
	\end{equation*}
	and thus
	\begin{equation*}
		\EE\left[ \UU(t, \mu_0) - \UU(t, \mu_0^N) \right] = \int_0^1 \EE\left[ \frac{\delta \UU}{\delta \frm}(t, \mu_{\zeta}^N, \Tilde{\xi}_1) - \frac{\delta \UU}{\delta \frm}(t, \Tilde{\mu}_{\zeta}^N, \Tilde{\xi}_1) \right] d\zeta.
	\end{equation*}
	Taking norms, and using the uniform in time Lipschitz continuity of $\frac{\delta \UU}{\delta \frm}$, we get, for $C$ a constant independent of $t$, $N$, and $\mu_0$, that 
	\begin{equation*}
		\bigg\lvert \EE\left[ \UU(t, \mu_0) - \UU(t, \mu_0^N) \right] \bigg\rvert \leq \EE\left[ C \int_0^1 \lvert \Tilde{\mu}_\zeta^N - \mu_\zeta^N \rvert d\zeta \right].
	\end{equation*}
	It is straightforward to see that $\lvert \Tilde{\mu}_\zeta^N - \mu_\zeta^N \rvert \leq \frac{2}{N}$, and so, by modifying $C$, we finally obtain
	\begin{equation*}
		\bigg\lvert \EE\left[ \UU(t, \mu_0^N) - \UU(t, \mu_0) \right]  \bigg\rvert \leq \frac{C}{N},
	\end{equation*}
	as desired.
\end{proof}

\section{Exponential Stability from the Linearized System}\label{sec:linErgToErg}

In this section, we show that the exponential stability of the linearized Kolmogorov equation, \textbf{(Lin-Erg)}, implies the exponential stability of the original Kolmogorov equation, \textbf{(Erg)}.
We begin our analysis by proving several auxiliary results on the measure flow $m(t; \mu)$ and the solutions to the linearized Kolmogorov equation.
In particular, in \cref{lem:mDerivative}, we show that the derivative of $m(t; \mu)$ with respect to $\mu$ is realized by the solution to a linearized Cauchy problem.
This is crucial, as it provides the link between the bounds provided by \textbf{(Lin-Erg)} and the bounds we need to prove \textbf{(Erg)}.
Using the fundamental theorem of calculus for functional derivatives to apply this, we obtain \cref{prop:mConv}, which shows that $m(t; \mu)$ is exponentially stable, and in \textbf{(D-Lip)} with uniform Lipschitz constants in time.

Initially, we find it necessary to fix a time horizon and prove two regularity results with constants depending on this horizon.
Later, we will be able to remove this restriction under \textbf{(Lin-Erg)}.
For any $\mu, \nu \in S_d$, we define $m^{(1)}(t, \mu, \nu)$ to be the solution to the Cauchy problem \textbf{(Linear-$[\mu, \nu-\mu, 0]$)}.

\begin{lem}\label{lem:rhoBound}
	Assuming $\alpha$ is in \textbf{(D-Lip)}, then for a fixed $T > 0$, and for all $\mu, \hat{\mu} \in S_d$, and any $t \in [0,T]$, there exist positive constants $C_1(T)$ and $C_2(T)$, potentially depending on $T$ but independent of $t$, $\mu$, and $\hat{\mu}$, such that
	\begin{equation}\label{eq:mLip}
		\lvert m(t; \hat{\mu}) - m(t; \mu) \rvert \leq C_1 \lvert \hat{\mu} - \mu \rvert,
	\end{equation}
	and
	\begin{equation}\label{eq:rhoBound}
		\lvert m(t; \hat{\mu}) - m(t; \mu) - m^{(1)}(t, \mu, \hat{\mu}) \rvert \leq C_2 \lvert \hat{\mu} - \mu \rvert^2.
	\end{equation}
\end{lem}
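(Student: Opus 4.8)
The plan is to set up both bounds via Grönwall's inequality on a finite horizon $[0,T]$, exploiting that $\alpha$ is in \textbf{(D-Lip)} (hence Lipschitz with bounded Lipschitz-continuous derivatives) and that $S_d$ is compact. First I would establish \eqref{eq:mLip}. Writing $\delta(t) \defeq m(t;\hat\mu) - m(t;\mu)$ and subtracting the two Kolmogorov equations \eqref{eq:kolmogorov}, one gets
\begin{equation*}
	\frac{d}{dt}\delta(t) = m(t;\hat\mu)\alpha(m(t;\hat\mu)) - m(t;\mu)\alpha(m(t;\mu)).
\end{equation*}
Adding and subtracting $m(t;\hat\mu)\alpha(m(t;\mu))$ and using that $\alpha$ is bounded (by $M$, up to a dimensional factor) together with the Lipschitz continuity of $\alpha$ in its measure argument, the right-hand side is bounded in $L^1$ norm by $C|\delta(t)|$ for a constant $C$ depending only on $d$, $M$, and $\mathrm{Lip}(\alpha)$. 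Since $|\delta(0)| = |\hat\mu - \mu|$, Grönwall gives $|\delta(t)| \le e^{Ct}|\hat\mu-\mu| \le e^{CT}|\hat\mu-\mu|$ on $[0,T]$, so we may take $C_1(T) = e^{CT}$. (In fact this step needs no horizon restriction, but stating it with $C_1(T)$ is harmless and convenient.)

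Next I would prove the second-order estimate \eqref{eq:rhoBound}. Set $\rho(t) \defeq m(t;\hat\mu) - m(t;\mu) - m^{(1)}(t,\mu,\hat\mu)$, which satisfies $\rho(0) = 0$. Differentiating and using that $m^{(1)}$ solves \textbf{(Linear-$[\mu,\hat\mu-\mu,0]$)}, i.e. $\frac{d}{dt}m^{(1)} = L_{m(t;\mu)}m^{(1)}$ with
\begin{equation*}
	L_{m(t;\mu)} q = m(t;\mu)\Big(\sum_{z\in[d]} \tfrac{\delta\alpha}{\delta\frm}(m(t;\mu),z)\,q_z\Big) + q\,\alpha(m(t;\mu)),
\end{equation*}
I would Taylor-expand the nonlinearity $\mu \mapsto \mu\alpha(\mu)$ around $m(t;\mu)$ evaluated at $m(t;\hat\mu)$. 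The first-order term of this expansion is exactly $L_{m(t;\mu)}\big(m(t;\hat\mu)-m(t;\mu)\big)$, so after cancellation one finds
\begin{equation*}
	\frac{d}{dt}\rho(t) = L_{m(t;\mu)}\rho(t) + \mathcal{R}(t),
\end{equation*}
where the remainder $\mathcal{R}(t)$ collects the second-order Taylor terms and is controlled, using the Lipschitz continuity of $\frac{\delta\alpha}{\delta\frm}$ (the fundamental theorem of calculus \eqref{eq:differenceIntegral} applied to $\frac{\delta\alpha}{\delta\frm}$) and the boundedness of $\alpha$, by $C|m(t;\hat\mu)-m(t;\mu)|^2 \le C C_1(T)^2 |\hat\mu-\mu|^2$ via the already-established \eqref{eq:mLip}. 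Since $L_{m(t;\mu)}$ has operator norm bounded uniformly in $t$ (again by compactness of $S_d$ and boundedness of $\alpha$ and $\frac{\delta\alpha}{\delta\frm}$), Grönwall's inequality applied to $|\rho(t)|$ with zero initial data and forcing $\mathcal{R}$ yields $|\rho(t)| \le C_2(T)|\hat\mu-\mu|^2$ on $[0,T]$, with $C_2(T)$ depending on $T$ through $C_1(T)$ and the exponential factor.

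The main obstacle is the bookkeeping in the Taylor expansion of the nonlinear map $\Psi(\nu) \defeq \nu\alpha(\nu)$: one must verify that the linearization of $\Psi$ at $m(t;\mu)$ is precisely $L_{m(t;\mu)}$ as defined in the paper, including getting the $\frac{\delta\alpha}{\delta\frm}$ term and the $q\,\alpha(\eta)$ term with the right arguments, and then express the second-order remainder in a form amenable to the quadratic bound. This requires care with the linear functional derivative conventions (in particular using \eqref{eq:derRel0} and the normalization \eqref{eq:funcDerNorm} to stay within $\RR^d_0$), but is otherwise a routine integral-remainder estimate once \eqref{eq:mLip} is in hand. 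A secondary point worth noting is that one should first argue $m^{(1)}(t,\mu,\hat\mu)$ exists and is unique on $[0,T]$, which is immediate since \textbf{(Linear-$[\mu,\hat\mu-\mu,0]$)} is a linear ODE with continuous (indeed bounded) coefficients.
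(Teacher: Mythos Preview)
Your proposal is correct and follows essentially the same approach as the paper: both parts are handled by Grönwall on $[0,T]$, with the second part proceeding by defining $\rho$, showing it satisfies $\frac{d}{dt}\rho = L_{m(t;\mu)}\rho + (\text{remainder})$, bounding the remainder quadratically via Taylor's theorem and \eqref{eq:mLip}, and then applying Grönwall with zero initial data. The paper writes out the remainder $B(t)$ explicitly rather than invoking the Taylor expansion of $\nu\mapsto\nu\alpha(\nu)$ abstractly, but the content is identical.
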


\begin{proof}
	To prove \eqref{eq:mLip}, we apply a form of Gronwall's inequality to $\lvert m(t; \hat{\mu}) - m(t; \mu) \rvert$.
	By the fundamental theorem of calculus and the Lipschitz continuity of $\alpha$, then for some $C>0$, independent of $s$, $t$, $\mu$, and $\hat{\mu}$, we have that 
	\begin{equation*}
		\lvert m(t; \hat{\mu}) - m(t; \mu) \rvert \leq \lvert \hat{\mu} - \mu \rvert + \int_0^t C \lvert m(s; \hat{\mu}) - m(s; \mu) \rvert ds.
	\end{equation*}
	Thus, by the standard form of Gronwall's inequality, $\lvert m(t; \hat{\mu}) - m(t; \mu) \rvert \leq C_1(T) \lvert \hat{\mu} - \mu \rvert$, for some positive constant $C_1(T)$, potentially depending on $T$ but independent of $\mu,\hat\mu$, and $t$. 

	We next define $\rho(t) \defeq m(t; \hat{\mu}) - m(t; \mu) - m^{(1)}(t, \mu, \hat{\mu})$.
	Differentiating $\rho$ with respect to $t$, we obtain
	\begin{align*}
		\frac{d}{dt} \rho(t) &= m(t; \hat{\mu}) \alpha(m(t; \hat{\mu})) - m(t; \mu) \alpha(m(t; \mu)) - m^{(1)}(t, \mu, \hat{\mu}) \alpha(m(t; \mu)) -\\
		&\qquad - m(t; \mu) \bigg( \sum_{z \in [d]} \frac{\delta \alpha}{\delta \frm}(m(t; \mu), z) m^{(1)}_z(t, \mu, \hat{\mu}) \bigg).
	\end{align*}
	Rewriting, we see that this is
	\begin{equation*}
		\frac{d}{dt} \rho(t) = L_{m(t; \mu)} \rho(t) + B(t),
	\end{equation*}
	where
	\begin{align*}
		B(t) &\defeq (m(t; \hat{\mu}) - m(t; \mu)) \Big(\alpha(m(t; \hat{\mu})) - \alpha(m(t; \mu))\Big) + \\
			 &\qquad + m(t; \mu) \bigg( \alpha(m(t; \hat{\mu})) - \alpha(m(t; \mu)) - \sum_{z \in [d]} \frac{\delta \alpha}{\delta \frm}(m(t; \mu), z) \big(m_z(t; \hat{\mu}) - m_z(t; \mu)\big) \bigg).
	\end{align*}

    In the following, we use a constant $\bar C$, which is independent of $t$, $\mu$, and $\hat{\mu}$, and may change from line to line.
	By Taylor's theorem and the fact that $\alpha$ is in \textbf{(D-Lip)}, we know that
	\begin{equation*}
		\bigg\lvert \alpha_{xy}(m(t; \hat{\mu})) - \alpha_{xy}(m(t; \mu)) - \sum_{z \in [d]} \frac{\delta \alpha_{xy}}{\delta \frm}(m(t; \mu), z) (m(t; \hat{\mu}) - m(t; \mu)) \bigg\rvert \leq \bar C \lvert m(t; \hat{\mu}) - m(t; \mu) \rvert^2.
	\end{equation*}
	Additionally, as $\alpha$ is Lipschitz in $m$, we have that
	\begin{equation*}
		\lvert \alpha_{xy}(m(t; \hat{\mu})) - \alpha_{xy}(m(t; \mu)) \rvert \leq \bar C \lvert m(t; \hat{\mu}) - m(t; \mu) \rvert.
	\end{equation*}
	Combining these with our expression for $B(t)$, we get
	\begin{equation*}
		\lvert B(t) \rvert \leq \bar C \lvert m(t; \hat{\mu}) - m(t; \mu) \rvert^2.
	\end{equation*}
	Applying \eqref{eq:mLip}, and incorporating a factor of $\bar C$ into $C_1(T)$, we have that
	\begin{equation*}
		\lvert B(t) \rvert \leq C_1(T) \lvert \hat{\mu} - \mu \rvert^2.
	\end{equation*}
	We now use that $\rho$ is the solution to \textbf{(Linear-$[\mu, 0, B]$)} and apply Gronwall's inequality to obtain
	\begin{equation*}
		\lvert \rho(t) \rvert \leq C_1(T) \int_0^t \lvert B(s) \rvert ds \leq C_2(T) \lvert \hat{\mu} - \mu \rvert^2,
	\end{equation*}
	as desired.

\end{proof}

By this result, we may think of $m^{(1)}(t, \mu, \nu)$ as the derivative of $m(t; \mu)$ with respect to $\mu$ in the direction $\nu$.
This is made precise by the following lemma:

\begin{lem}\label{lem:mDerivative}
	For $\alpha$ in \textbf{(D-Lip)}, $t \geq 0$, $\mu \in S_d$, and $z \in [d]$, we have that
	\begin{equation*}
		\frac{\delta m}{\delta \frm}(t, \mu, z) = m^{(1)}(t, \mu, \delta_z).
	\end{equation*}
\end{lem}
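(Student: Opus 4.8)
The plan is to show that the linear functional derivative $\frac{\delta m}{\delta \frm}(t,\mu,z)$, which by definition is the one-sided limit of $\big(m(t;(1-\varepsilon)\mu+\varepsilon\delta_z)-m(t;\mu)\big)/\varepsilon$, coincides with the solution $m^{(1)}(t,\mu,\delta_z)$ of \textbf{(Linear-$[\mu,\delta_z-\mu,0]$)}. The key observation is that \cref{lem:rhoBound} already gives us the quadratic error estimate \eqref{eq:rhoBound}: for any $\hat\mu\in S_d$,
\begin{equation*}
	\big\lvert m(t;\hat\mu) - m(t;\mu) - m^{(1)}(t,\mu,\hat\mu)\big\rvert \leq C_2(T)\,\lvert\hat\mu-\mu\rvert^2 .
\end{equation*}
So the first step is to apply this with $\hat\mu = (1-\varepsilon)\mu+\varepsilon\delta_z = \mu + \varepsilon(\delta_z-\mu)$, which is a legitimate element of $S_d$ for $\varepsilon\in[0,1]$. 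Then $\lvert\hat\mu-\mu\rvert = \varepsilon\lvert\delta_z-\mu\rvert \le 2\varepsilon$, so the right-hand side is $O(\varepsilon^2)$.

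The second step is to use linearity of the Cauchy problem in its initial condition: since $m^{(1)}(t,\mu,\nu)$ solves the linear equation $\frac{d}{dt}q = L_{m(t;\mu)}q$ with $q(0)=\nu-\mu$... wait — I need to be careful about the parametrization. The lemma writes $m^{(1)}(t,\mu,\nu)$ as the solution to \textbf{(Linear-$[\mu,\nu-\mu,0]$)}, i.e. with initial datum $\nu-\mu$, yet in the statement to be proved the second argument of $m^{(1)}$ is $\delta_z$, suggesting we want initial datum $\delta_z-\mu\in\RR^d_0$. Plugging $\hat\mu=\mu+\varepsilon(\delta_z-\mu)$ into \eqref{eq:rhoBound} and using that the map $\nu_0\mapsto(\text{solution of }\textbf{(Linear-}[\mu,\nu_0,0]\textbf{)})$ is linear (the equation is linear homogeneous, so scaling the initial condition by $\varepsilon$ scales the solution by $\varepsilon$), we get $m^{(1)}(t,\mu,\hat\mu)$, whose defining initial condition is $\hat\mu-\mu = \varepsilon(\delta_z-\mu)$, equals $\varepsilon$ times the solution with initial condition $\delta_z-\mu$, which is exactly $m^{(1)}(t,\mu,\delta_z)$ in the notation of the statement. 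Hence
\begin{equation*}
	\left\lvert \frac{m(t;\mu+\varepsilon(\delta_z-\mu)) - m(t;\mu)}{\varepsilon} - m^{(1)}(t,\mu,\delta_z) \right\rvert \leq \frac{C_2(T)}{\varepsilon}\,\lvert\varepsilon(\delta_z-\mu)\rvert^2 = C_2(T)\,\varepsilon\,\lvert\delta_z-\mu\rvert^2 .
\end{equation*}

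The third and final step is to let $\varepsilon\to0^+$: the right-hand side vanishes, so the difference quotient on the left converges to $m^{(1)}(t,\mu,\delta_z)$. But the difference quotient is precisely the expression inside the limit defining $\frac{\delta m}{\delta \frm}(t,\mu,z)$ (applied componentwise, since $m$ is $\RR^d$-valued and the functional derivative of a vector-valued function is taken componentwise as set up in the Notation section). Therefore the limit exists and equals $m^{(1)}(t,\mu,\delta_z)$, which is the claim. Note that $t$ was arbitrary and $C_2(T)$ only needs $t\in[0,T]$ for a fixed but arbitrary horizon, so the identity holds for all $t\ge0$. I do not anticipate a genuine obstacle here — the real work was done in \cref{lem:rhoBound}; the only points requiring mild care are (i) checking that $\mu+\varepsilon(\delta_z-\mu)\in S_d$ so that \eqref{eq:rhoBound} applies, and (ii) correctly tracking the two conventions for the ``$\nu$'' slot of $m^{(1)}$ (direction $\delta_z$ versus initial datum $\delta_z-\mu$) via the linearity/homogeneity of the linearized equation, so that the $\varepsilon$'s cancel cleanly.
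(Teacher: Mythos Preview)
Your proposal is correct and follows essentially the same approach as the paper: apply \cref{lem:rhoBound} with $\hat\mu=(1-\varepsilon)\mu+\varepsilon\delta_z$, use the linearity and homogeneity of \textbf{(Linear-$[\mu,q_0,0]$)} to write $m^{(1)}(t,\mu,(1-\varepsilon)\mu+\varepsilon\delta_z)=\varepsilon\,m^{(1)}(t,\mu,\delta_z)$, divide by $\varepsilon$, and send $\varepsilon\to0^+$. Your explicit checks that $\hat\mu\in S_d$ and that the convention for the second argument of $m^{(1)}$ is handled correctly are exactly the mild care the argument requires.
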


\begin{proof}
	For any fixed $t$, we can take $T > t$ and apply \cref{lem:rhoBound} to get that 
	\begin{equation*}
		\lvert m(t; \hat{\mu}) - m(t; \mu) - m^{(1)}(t, \mu, \hat{\mu}) \rvert \leq C_2(T) \lvert \hat{\mu} - \mu \rvert^2.
	\end{equation*}
	Then, we have that
	\begin{align*}
		\frac{\delta m}{\delta \frm}(t, \mu, z) &= \lim_{\varepsilon \to 0} \frac{m(t; (1-\varepsilon)\mu + \varepsilon \delta_z) - m(t; \mu)}{\varepsilon} \\
		&= \lim_{\varepsilon \to 0} \frac{m^{(1)}(t, \mu, (1-\varepsilon)\mu + \varepsilon \delta_z) + o(\varepsilon^2)}{\varepsilon} \\
		&= m^{(1)}(t, \mu, \delta_z),
	\end{align*}
	where the last equality follows from the fact that, since \textbf{(Linear-$[\mu, q_0, 0]$)} is linear and homogeneous, we have
    \begin{equation*}
        m^{(1)}(t, \mu, (1-\varepsilon)\mu + \varepsilon \hat{\mu}) = \varepsilon m^{(1)}(t, \mu, \hat{\mu}).
    \end{equation*}
\end{proof}

\begin{rk}\label{rk:m1Decay}
	The assumption \textbf{(Lin-Erg)} ensures that the derivatives of $m(t; \mu)$ with respect to $\mu$ are exponentially decaying.
	More precisely, as $m^{(1)}(t, \mu, \nu)$ is the solution to \textbf{(Linear-$[\mu, \nu-\mu, 0]$)}, the statement of \textbf{(Lin-Erg)} immediately shows that
	\begin{equation*}
		\sup_{\mu, \nu \in S_d} \lvert m^{(1)}(t, \mu, \nu) \rvert \leq 2 c_2 e^{-\lambda t},
	\end{equation*}
	where $\lambda$ and $c_2$ are as in \textbf{(Lin-Erg)}.
\end{rk}

The previous results now allow us to justify that \textbf{(Lin-Erg)} is indeed an ergodicity condition, and in fact implies \textbf{(Erg)}.

\begin{prop}\label{prop:mConv}
	Under \textbf{(Lin-Erg)} and the assumption that $\alpha$ is in \textbf{(D-Lip)}, $\alpha$ satisfies \textbf{(Erg)}, i.e. solutions to \eqref{eq:kolmogorov} are exponentially stable as in \eqref{eq:convergence}, differentiable with respect to the initial condition, and the derivative is Lipschitz continuous in the initial condition uniformly in time as in \eqref{eq:derLip}. 
\end{prop}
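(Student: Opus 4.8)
The plan is to derive both halves of \textbf{(Erg)} directly from \textbf{(Lin-Erg)}, using \cref{lem:mDerivative} to identify $\frac{\delta m}{\delta \frm}(t,\mu,z)$ with the solution $m^{(1)}(t,\mu,\delta_z)$ of a homogeneous linearized Cauchy problem, the fundamental theorem of calculus \eqref{eq:differenceIntegral}, and the exponential decay recorded in \cref{rk:m1Decay}. The existence and differentiability of $\frac{\delta m}{\delta \frm}$ for all $t\ge 0$ required by \textbf{(Erg)} is already supplied by \cref{lem:mDerivative}, so only the two quantitative bounds \eqref{eq:convergence} and \eqref{eq:derLip} need to be established, and they will be proven in that order since the second uses the first.

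For \eqref{eq:convergence}, apply \eqref{eq:differenceIntegral} componentwise to $m(t;\cdot)$ to obtain
\[
m(t;\mu) - m(t;\hat\mu) = \int_0^1 \sum_{z\in[d]} \frac{\delta m}{\delta \frm}\big(t,(1-\zeta)\hat\mu+\zeta\mu,z\big)(\mu-\hat\mu)_z\, d\zeta .
\]
By \cref{lem:mDerivative}, $\frac{\delta m}{\delta \frm}(t,\eta,z) = m^{(1)}(t,\eta,\delta_z)$, and since \textbf{(Linear-$[\eta,q_0,0]$)} is linear and homogeneous and $\sum_z (\mu-\hat\mu)_z = 0$, the combination $\sum_z (\mu-\hat\mu)_z\, m^{(1)}(t,\eta,\delta_z)$ is precisely the solution of \textbf{(Linear-$[\eta,\mu-\hat\mu,0]$)}. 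Hence \textbf{(Lin-Erg)} (with $r\equiv 0$) bounds its norm by $c_2 e^{-\lambda t}|\mu-\hat\mu|$, uniformly in the base point $\eta$, and integrating over $\zeta\in[0,1]$ gives \eqref{eq:convergence} with $c_1 = c_2$.

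For \eqref{eq:derLip}, fix $z$, set $q^\mu(t)\defeq m^{(1)}(t,\mu,\delta_z)$, and let $w(t)\defeq q^\mu(t)-q^{\hat\mu}(t)$. Since $q^\mu$ solves $\dot q^\mu = L_{m(t;\mu)}q^\mu$ and similarly for $\hat\mu$, the difference $w$ solves the linearized equation based at $\mu$ with $w(0) = \hat\mu-\mu \in \RR^d_0$ and source $r(t)\defeq (L_{m(t;\mu)} - L_{m(t;\hat\mu)})q^{\hat\mu}(t)$, which lies in $\RR^d_0$ because each $L_\eta$ maps $\RR^d_0$ to $\RR^d_0$ and $q^{\hat\mu}(t)\in\RR^d_0$; thus $w$ solves \textbf{(Linear-$[\mu,\hat\mu-\mu,r]$)}. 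A short computation from the definition of $L_\eta$, using that $\alpha$ and $\frac{\delta\alpha}{\delta \frm}$ are Lipschitz and bounded on the compact set $S_d$ (both consequences of $\alpha\in$\textbf{(D-Lip)}), yields the operator estimate $|(L_\eta - L_{\hat\eta})q| \le C|\eta-\hat\eta|\,|q|$; combining this with \eqref{eq:convergence} and $|q^{\hat\mu}(t)|\le 2c_2 e^{-\lambda t}$ from \cref{rk:m1Decay} gives $|r(t)| \le C e^{-2\lambda t}|\mu-\hat\mu|$, so $r$ is bounded and measurable. Applying \textbf{(Lin-Erg)} to $w$ and evaluating $\int_0^t e^{-\lambda(t-s)}e^{-2\lambda s}\, ds \le \lambda^{-1}e^{-\lambda t}$ produces $|w(t)| \le c_2(1+C\lambda^{-1})e^{-\lambda t}|\mu-\hat\mu|$, which is even stronger than \eqref{eq:derLip}, with constants independent of $t$, $z$, $\mu$, $\hat\mu$; this completes the verification of \textbf{(Erg)}.

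The main obstacle is the second step: one must correctly identify the perturbed Cauchy problem solved by the difference $w$ of two linearized flows based at \emph{different} measures, verify that its initial datum and source term lie in $\RR^d_0$ so that \textbf{(Lin-Erg)} applies, and establish the operator-Lipschitz bound $|(L_\eta - L_{\hat\eta})q|\le C|\eta-\hat\eta|\,|q|$. The decay estimate on $r$ crucially uses the exponential stability \eqref{eq:convergence} already obtained in the first step, so the ordering of the two steps is essential; the remaining manipulations — linearity of the homogeneous problem, the Duhamel estimate, and the convolution bound — are routine.
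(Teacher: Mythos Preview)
Your proof is correct and follows essentially the same approach as the paper's: both halves proceed via \cref{lem:mDerivative}, the fundamental theorem of calculus, and then \textbf{(Lin-Erg)} applied to the difference $w(t)$ recast as a linearized Cauchy problem with source $(L_{m(t;\mu)}-L_{m(t;\hat\mu)})q^{\hat\mu}(t)$. Your bounds are in fact slightly sharper --- you exploit the decay of both $|m(t;\mu)-m(t;\hat\mu)|$ and $|q^{\hat\mu}(t)|$ to get $|r(t)|\le Ce^{-2\lambda t}|\mu-\hat\mu|$ and hence exponential decay of the Lipschitz modulus in \eqref{eq:derLip}, whereas the paper only uses boundedness of $\frac{\delta m}{\delta \frm}$ to obtain a uniform-in-time constant --- but the architecture is identical.
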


\begin{proof}
	 As we know by \cref{lem:mDerivative} that $m^{(1)}(t, \mu, \delta_z) = \frac{\delta m}{\delta \frm}(t, \mu, z)$, we may use the fundamental theorem of calculus to see that, for any $\mu, \hat{\mu} \in S_d$,
	\begin{equation*}
		m(t; \mu) - m(t; \hat{\mu}) = \int_0^1 \sum_{z \in [d]} m^{(1)}(t, \tau \mu + (1-\tau) \hat{\mu}, \delta_z) (\hat{\mu} - \mu)_z \; d\tau.
	\end{equation*}
	Then, by \cref{rk:m1Decay}, we get
	\begin{equation}
		\lvert m(t; \mu) - m(t; \hat{\mu}) \rvert \leq 2 c_2 \lvert \mu - \hat{\mu} \rvert e^{-\lambda t}.
	\end{equation}
    
    By \cref{lem:mDerivative}, we know that $m(t; \mu)$ is differentiable in $\mu$, so it only remains to show that the derivative, $\frac{\delta m}{\delta \frm}(t, \mu, z)$ is Lipschitz continuous in $\mu$ uniformly in $t$.

	We show this by expressing a difference of derivatives as a solution to a linearized Cauchy problem.
	For any $\mu, \hat{\mu} \in S_d$, we have by \cref{lem:mDerivative} that
	\begin{equation*}
		\frac{d}{dt}\left( \frac{\delta m}{\delta \frm}(t, \mu, z) - \frac{\delta m}{\delta \frm}(t, \hat{\mu}, z) \right) = L_{m(t; \mu)}\frac{\delta m}{\delta \frm}(t, \mu, z) - L_{m(t; \hat{\mu})} \frac{\delta m}{\delta \frm}(t, \hat{\mu}, z),
	\end{equation*}
	Letting $\rho(t) \defeq \frac{\delta m}{\delta \frm}(t, \mu, z) - \frac{\delta m}{\delta \frm}(t, \hat{\mu}, z)$, we see that $\rho$ satisfies
	\begin{equation*}
		\frac{d}{dt} \rho(t) = L_{m(t; \mu)} \rho(t) + r(t), \quad \rho(0) = \hat{\mu} - \mu,
	\end{equation*}
	where
	\begin{align*}
		r(t) &= \left( L_{m(t; \mu)} - L_{m(t; \hat{\mu})} \right) \frac{\delta m}{\delta \frm}(t, \hat{\mu}, z) \\
			 &= \frac{\delta m}{\delta \frm}(t, \hat{\mu}, z) \big( \alpha(m(t; \mu)) - \alpha(m(t; \hat{\mu})) \big) \\
			 &\qquad + m(t; \mu) \left( \sum_{y \in [d]} \frac{\delta \alpha}{\delta \frm}(m(t; \mu), y) \left(\frac{\delta m}{\delta \frm}(t, \hat{\mu}, z)\right)_y \right) \\
			 &\qquad - m(t; \hat{\mu}) \left( \sum_{y \in [d]} \frac{\delta \alpha}{\delta \frm}(t, m(t; \hat{\mu}), y) \left(\frac{\delta m}{\delta \frm}(t, \hat{\mu}, z)\right)_y \right).
	\end{align*}
	It follows from the fact that $\alpha$ is in \textbf{(D-Lip)} and the boundedness of $\frac{\delta m}{\delta \frm}(t, \mu, z)$ that
	\begin{equation*}
		\lvert r(t) \rvert \leq C \lvert \mu - \hat{\mu} \rvert,
	\end{equation*}
	for some constant $C>0$ independent of $\mu$, $\hat{\mu}$, and $t$.
	Then by \textbf{(Lin-Erg)}, we have that
	\begin{equation*}
		\lvert \rho(t) \rvert \leq c \left( e^{-\lambda t} \lvert \mu - \hat{\mu} \rvert + \lvert \mu - \hat{\mu} \rvert \right),
	\end{equation*}
	thus showing that $\frac{\delta m}{\delta \frm}(t, \mu, z)$ is Lipschitz continuous in $\mu$ uniformly in $t$.

\end{proof}

\section{Applications to Nonlinear Markov Chains and Mean Field Games}\label{sec:applications}

\subsection{Nonlinear Markov Chains}\label{subsec:nonlinearMarkov}

The question of exponential stability for nonlinear Markov chains is significantly more challenging than for linear Markov chains.
In particular, for $d >2$, it is difficult to obtain an easily verifiable condition that ensures exponential stability.
We apply our results in \Cref{sec:linErgToErg} to obtain two conditions for the exponential ergodicity of a nonlinear Markov chain in the case that $\alpha$ is in \textbf{(D-Lip)}.

\begin{prop}
	For $\alpha$ in \textbf{(D-Lip)}, the nonlinear Markov chain with transition rates given by $\alpha$ satisfies \textbf{(Erg)} and, in particular, is exponentially ergodic if we have either of the following conditions:
	\begin{enumerate}
		\item For all $x \neq y \in [d]$, there is a uniform lower bound, $L$, for $\alpha_{xy}(\mu)$, and the Lipschitz constant of $\alpha$, which we denote by $K$, satisfies $K < L/d$.
		\item For all $x, y \in [d]$ and $\mu \in S_d$, $\alpha$ satisfies
			\begin{equation*}
				\sum_{z, w \in [d]} \mu_z D^\frm_{wx}\alpha_{zy}(\mu) \mu_w > -\alpha_{xy}(\mu),
			\end{equation*}
	\end{enumerate}
\end{prop}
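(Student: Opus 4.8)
By \cref{prop:mConv} it suffices, in either case, to verify \textbf{(Lin-Erg)}; the conclusion (\textbf{(Erg)}, exponential ergodicity, and uniform-in-time Lipschitz derivatives) then follows. The plan begins by rewriting the linearized operator as right multiplication by a $d\times d$ matrix: for $\eta \in S_d$,
\begin{equation*}
	L_\eta q = q\,\widetilde A_\eta, \qquad (\widetilde A_\eta)_{xy} \defeq \alpha_{xy}(\eta) + \sum_{w \in [d]} \eta_w\,\frac{\delta \alpha_{wy}}{\delta \frm}(\eta, x),
\end{equation*}
which is immediate from the definition of $L_\eta$. Since $\sum_y \alpha_{xy}(\eta)=0$ identically in $\eta$, applying the (linear) operator $\frac{\delta}{\delta\frm}(\cdot,z)$ gives $\sum_y \frac{\delta\alpha_{xy}}{\delta\frm}(\eta,z)=0$, so every row of $\widetilde A_\eta$ sums to zero. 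By \eqref{eq:derRel2} the quantity in part (2) is exactly $(\widetilde A_\eta)_{xy}-\alpha_{xy}(\eta)$, so the hypothesis of part (2) says precisely that $(\widetilde A_\eta)_{xy}>0$ for $x\ne y$ (the diagonal entries being forced negative by the zero row-sum property); in other words, $\widetilde A_\eta$ is, for each $\eta$, the generator of a continuous-time Markov chain on $[d]$ with every jump rate strictly positive.

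Under part (2), I would then argue as follows. Because $\alpha\in$ \textbf{(D-Lip)} the map $\eta\mapsto\widetilde A_\eta$ is continuous, so compactness of $S_d$ yields $L'>0$ with $(\widetilde A_\eta)_{xy}\ge L'$ for all $\eta$ and all $x\ne y$. A standard coupling of two copies of the chain---which from any pair of distinct states $i\ne j$ coalesce at total rate at least $dL'$ (rate $\ge L'$ moving both to each of the $d-2$ third states, plus rate $\ge L'$ for either copy to jump onto the other's site)---shows that the propagator of the time-inhomogeneous linear equation $\dot q = q\,\widetilde A_{m(r;\mu)}$ contracts $\lvert\cdot\rvert$ on $\RR^d_0$ at rate $dL'$, uniformly in $\mu$; equivalently one checks directly the Dini-derivative inequality $D^+\lvert q(t)\rvert \le -dL'\lvert q(t)\rvert$ along solutions. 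Writing \textbf{(Linear-$[\mu,q_0,r]$)} as $\dot q = q\,\widetilde A_{m(t;\mu)} + r(t)$ and invoking Duhamel's formula (or Gronwall) gives $\lvert q(t)\rvert \le e^{-dL't}\lvert q_0\rvert + \int_0^t e^{-dL'(t-s)}\lvert r(s)\rvert\,ds$, which is \textbf{(Lin-Erg)} with $\lambda = dL'$ and $c_2 = 1$.

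For part (1) the plan is to reduce to part (2). From the definition of $\frac{\delta}{\delta\frm}$ and the $K$-Lipschitz continuity of $\alpha$ one gets $\bigl\lvert \frac{\delta\alpha_{wy}}{\delta\frm}(\eta,x)\bigr\rvert \le cK$ for a dimensional constant $c$, hence $\lvert(\widetilde A_\eta)_{xy}-\alpha_{xy}(\eta)\rvert\le cK$ for all $x,y$. When $x\ne y$, combining the lower bound $\alpha_{xy}(\eta)\ge L$ with $K<L/d$ gives $(\widetilde A_\eta)_{xy}\ge L-cK>0$ uniformly in $\eta$, so $\alpha$ satisfies the hypothesis of part (2) and we are done. (Alternatively, keep the splitting $L_\eta q = q\alpha(\eta) + b_\eta(q)$ with $\lvert b_\eta(q)\rvert\le cK\lvert q\rvert$, observe that $\dot q = q\alpha(m(t;\mu))$ already contracts at rate $\ge dL$ by the same coupling, and absorb the small perturbation $b_\eta$ through Gronwall.)

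The main obstacle is the content of the second paragraph: extracting a contraction rate that is genuinely uniform in $\eta$ and independent of $t$ for the frozen linear flows $\dot q = q\,\widetilde A_{m(t;\mu)}$. This is where the coupling / Dobrushin-coefficient estimate does the work, together with the continuity-plus-compactness lower bound $L'$ and a Dini-derivative argument to transfer the single-matrix estimate to the time-inhomogeneous flow. Once this uniform exponential decay is in hand, the source term is handled routinely by Duhamel, the reduction of part (1) to part (2) is an elementary Lipschitz estimate, and the appeal to \cref{prop:mConv} is immediate. (As a consistency check, for the matrix in \cref{ex:nonErg} one computes $(\widetilde A_\mu)_{12}<0$ at $\mu_1=\tfrac12$, so part (2) fails there, matching the fact that that system violates \textbf{(Lin-Erg)}.)
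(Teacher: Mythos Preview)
Your proof follows essentially the same route as the paper's: rewrite $L_\eta q = q\,\widetilde A_\eta$, identify condition (2) with strict positivity of the off-diagonal entries of $\widetilde A_\eta$ via \eqref{eq:derRel2}, reduce condition (1) to this by a Lipschitz bound on $\frac{\delta\alpha}{\delta\frm}$, and conclude \textbf{(Lin-Erg)} (hence \textbf{(Erg)} via \cref{prop:mConv}). The only difference is that where the paper invokes an external lemma (\cite[Lemma~3.3]{coh-zel2022}) to pass from uniform off-diagonal positivity of $\widetilde A_\eta$ to \textbf{(Lin-Erg)}, you supply a self-contained coupling/Dini-derivative argument yielding the explicit contraction rate $dL'$.
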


\begin{proof}
	For a given $\alpha$, define
	\begin{equation*}
		A_{xy}(t, \mu) \defeq m(t, \mu) \cdot \frac{\delta \alpha_{\cdot,y}}{\delta \frm}(m(t; \mu), x) + \alpha_{xy}(m(t; \mu)).
	\end{equation*}
	Then the linearized Kolmogorov equation takes the form
	\begin{equation*}
		\frac{d}{dt} q(t) = q(t) A(t, \mu) + r(t).
	\end{equation*}
	As shown in \cite[Lemma 3.3]{coh-zel2022}, if $A(t, \mu)$ has off-diagonal entries that are strictly positive, then \textbf{(Lin-Erg)} holds.
	Thus, by applying \cref{prop:mConv}, we obtain that the nonlinear Markov chain is exponentially ergodic if $A(t, \mu)$ has strictly positive off-diagonal entries.
	We can rephrase this condition as
	\begin{equation}\label{eq:offDiagBound}
		\sum_{z \in [d]} \mu_z \frac{\delta \alpha_{zy}}{\delta \frm}(\mu, x) > -\alpha_{xy}(\mu),
	\end{equation}
	for all $x \neq y \in [d]$.
	As $\alpha(\mu) > L$ for all $x \neq y$ and $\mu \in S_d$, it clearly suffices to have
	\begin{equation*}
		\sum_{z \in [d]} \mu_z \frac{\delta \alpha_{zy}}{\delta \frm}(\mu, x) > -L,
	\end{equation*}
	and so by the definition of $K$, we obtain the first condition.
	For the second condition, we have by \eqref{eq:derRel2} that \eqref{eq:offDiagBound} is equivalent to
	\begin{equation*}
		\sum_{z, w \in [d]} \mu_z D^\frm_{wx}\alpha_{zy}(\mu) \mu_w > -\alpha_{xy}(\mu),
	\end{equation*}
	which is exactly the second condition.
\end{proof}

\subsection{Large Population Games and Mean Field Games}\label{subsec:MFG}

Large population games are challenging to analyze due to the complexity of interactions and the difficulty in finding exact equilibria. In some specific cases, such as finite horizon games, solutions are more tractable when using either open-loop or closed-loop feedback controls. In the open-loop case, each player observes and reacts to their own dynamics, whereas in the closed-loop feedback case, players' actions depend on the current states of all the players \cite{CardaliaguetDelarueLasryLions}. 

For ergodic games, where each player aims to minimize a long-run average cost, results are known only in certain scenarios. For example, the open-loop control case was studied in \cite{bar-pri}, while the closed-loop feedback case remains an open problem. Recent progress has been made in \cite{coh-zel2022}, where asymptotic (in the number of players) Nash equilibria were constructed using policies derived from the limiting model, the ergodic MFG. 

Two approaches to constructing these asymptotic equilibria are presented in \cite{coh-zel2022}:
\begin{itemize}
    \item \textbf{Independent Controls:} Players use controls independent of the others, constructed from an algebraic system of equations describing the mean field equilibrium. The equations correspond to the Hamilton--Jacobi--Bellman (HJB) equation and the stationary distribution of the limiting MFG.
    \item \textbf{Mass-Dependent Controls:} Players respond to the current mass distribution of others. These controls are derived from the master equation of the MFG (different from \eqref{eq:master}), which provides a comprehensive characterization of the MFG. The master equation of the MFG  is a partial differential equation that captures the evolution of the value function and the distribution of players over time, enabling the construction of more concentrated equilibria.
\end{itemize}

In the following, we describe the ergodic $N$-player game, the asymptotic regimes of Nash equilibria, some previously established results, and the contribution of the current paper in extending these findings. 

\subsubsection{The Ergodic \texorpdfstring{$N$}{N}-Player Game - Setup} To make the section more self contained, we briefly describe the game from \cite{coh-zel2022}. Consider an $N$-player game where each player $i$ selects a feedback control (a rate matrix), denoted by $\beta^i$. By symmetry among players, $\beta^i$ can be assumed to depend only on the private state of player $i$ at time $t$, $X_t^{i,N}\in[d]$, and the empirical distribution of all players, $\mu^N_t := \mu^N_{\boldsymbol{X}^N_t}$, where 
$\boldsymbol{X}^N_t = (X_t^{1,N}, \ldots, X_t^{N,N})$ is the state of the players at time $t$.

The cost incurred by player $i$, given the initial state vector $\boldsymbol{X}^N_0 = (X_0^{1,N}, \ldots, X_0^{N,N}) \in [d]^N$ and a strategy profile $\boldsymbol{\beta} = (\beta^{1,N}, \ldots, \beta^{N,N})$, is defined as:
\[
J^{i,N}_0(\boldsymbol{X}_0, \boldsymbol{\beta}) := \limsup_{T \to \infty} \frac{1}{T} \mathbb{E} \Bigg[\int_0^T \Big\{f(X^{i,N}_t, \beta^i(X^{i,N}_t, \mu_t^N)) + F(X^{i,N}_t, \mu_t^N)\Big\} \, dt\Bigg],
\]
where $f$ and $F$ represent the running costs for player $i$ based on their individual and collective strategies.

An {\it $\epsilon$-Nash equilibrium} is a strategy profile in which no player can improve their cost by more than $\epsilon$ by unilaterally deviating from their chosen strategy. In this setting, each player's strategy is nearly optimal, with at most a margin of $\epsilon$ in improvement.

We now state the main assumptions from \cite{coh-zel2022} that are needed in order to obtain the asymptotic optimality of the profiles we would work with.

\begin{enumerate}[label=\textbf{(A\arabic*)}]
    \item For all $x \neq y \in [d]$, the set of allowed transition rates from state $x$ to $y$ is $[L,U]$ for some $0 < L < U$.
    
	\item The value of the individual running cost function $f(x,a)$ is independent of the value of $a_x$. Also, the interaction $F(x, \cdot)$ is in \textbf{(D-Lip)} for all $x \in [d]$, and Lasry--Lions monotone, i.e. for all $\mu, \nu \in S_d$,
		\begin{equation*}
			\sum_{x \in [d]} (F(x, \mu) - F(x, \nu))(\mu_x - \nu_x) \geq 0.
		\end{equation*}

	\item The Hamiltonian $H \from [d] \times \RR^d \to \RR$, defined by 
    \[
    H(x,p)=\min_a\{f(x,a)+a\cdot p\},
    \]
    is twice continuously differentiable with respect to $p$, the gradient $D_p H$ and the Hessian $D^2_{pp} H$ are Lipschitz in $p$ and there exists $C_{2, H} > 0$ such that $D^2_{pp}H(x, p) \leq -C_{2, H}$.
\end{enumerate}
We define $\gamma^*(x,p) \defeq D_p H(x,p)$, which (by properly defining $\gamma^*_x(x,p)=-\sum_{y\ne x}\gamma^*_y(x,p)$) is the vector of transition rates out of the state $x$, shown below in \eqref{eq:ergMFG}.

A mean field equilibrium is characterized as a fixed point of the best response mapping for a representative player minimizing an ergodic cost. We omit the formal definition of the mean field equilibrium, as it is not central to the following discussion. Instead, we focus on the relevant equations of the mean field game from which the asymptotic Nash equilibria are derived.

\subsubsection{Two Asymptotically Nash Equilibria Regimes Driven by MFG Equations}
MFGs are typically described by a forward-backward coupled system of differential equations. In this system, the backward equation is an HJB equation that describes the value function, while the forward equation governs the evolution of the state distribution of the representative player under the mean field equilibrium. Another key equation in this framework is the master equation of the MFG, which encapsulates the forward-backward system into a single differential equation. This formulation has been employed, for instance, in \cite{CardaliaguetDelarueLasryLions}, \cite{bay-coh2017}, and \cite{cec-pel2019} to analyze the convergence of Nash equilibria in $N$-player games to the mean field equilibrium.

In contrast, the ergodic case presents a different scenario. Due to the stationary nature of the problem, we have a stationary (algebraic) MFG system, which consists of a stationary HJB equation and a distribution equation. This system characterizes the stationary state alone. This master equation of the MFG, however, is more general. It incorporates a feedback structure and can describe states beyond the stationary regime, offering a broader perspective. We will now delve into more detailed aspects of this distinction, taking the equations from \cite{coh-zel2022, coh-zel24}.

We start with the \textit{stationary ergodic MFG system} is
\begin{align}\label{eq:statErgMFG}
	\begin{cases}
		- \bar\varrho + H(x, \Delta_x \overline{u}) + F(x, \overline{\mu}) = 0, \\
		\sum_{y \in [d]} \overline{\mu}_y \gamma^*_x(y, \Delta_y \overline{u}) = 0.\\
	\end{cases}
\end{align}
The solution of \eqref{eq:statErgMFG} is a triple, $(\bar\varrho, \overline{u}, \overline{\mu})$, which under the assumptions above is unique. It has the interpretation that the {\it value of the MFG} (the cost under the ergodic mean field equilibrium) is given by $\bar\varrho \in \RR$, the potential vector is $\overline{u} \in \RR^d_0$, and the stationary distribution of a representative player in the mean field game is $\overline{\mu} \in S_d$.
The potential vector $\overline{u}$ represents the ``cost-to-go" from a given state, incorporating both the immediate cost and the long-term behavior of the system. It can be thought of as analogous to the value function in the finite-horizon case, but is more precisely the limit of differences of the value functions for the discounted mean field game as the discount rate goes to zero.
It is unique up to an additive contact vector: $(c,\ldots,c)$ for any $c\in\mathbb{R}$. The transition rate $\gamma^*_z(y, \Delta_y \bar u)$ from state $y$ to $z$, which is independent of $t$ and $\mu$, governs the mean field equilibrium of the representative player in the MFG.

We also consider the \textit{ergodic master equation of the MFG}: 
\begin{equation}\label{eq:ergMaster}
	\bar{\varrho} = H(x, \Delta_x U^0(\cdot, \mu)) + F(x, \mu) + \sum_{y, z \in [d]} \mu_y D^\frm_{yz} U^0(x, \mu) \gamma^*_z(y, \Delta_y U^0(\cdot, \mu)),
\end{equation}
which has solutions of the form $(\bar{\varrho}, U^0)$, where $\bar{\varrho} \in \RR$ is again the value of the MFG, $U^0 \from [d] \times S_d \to \RR$ is the master equation potential function (like the ``cost-to-go" $\bar u$, but in a feedback form), and $\gamma^*_z(y, \Delta_y U^0(\cdot, \mu))$ is the transition rate from state $y$ to $z$, when the current distribution of the state of the representative player is $\mu$, in a mean field equilibrium.

Altogether, we presented two transition rates that govern the mean-field equilibrium of the representative player in the MFG. However, we do not discuss the mean-field equilibria in this paper, as it is not directly relevant to the results we will prove. Instead, we will use these rates in an $N$-player game setup, as described below. Before doing so, we first establish the connection between the two.

By vanishing discount type arguments, Cohen and Zell have shown in \cite{coh-zel2022} that solutions to \eqref{eq:statErgMFG} correspond to mean field equilibria and that by plugging in the stationary distribution in the master equation of the MFG, namely $\mu=\bar\mu$, one restores \eqref{eq:ergMaster}. Specifically, $\gamma^*_z(y, \Delta_y U^0(\cdot, \bar\mu))=\gamma^*_z(y, \Delta_y \bar u)$ and up to a constant additive factor in the potential, solutions to \eqref{eq:ergMaster} correspond to solutions of the stationary ergodic MFG system.

In \cite[Corollary 3.1]{coh-zel2022} it is shown that the {\it independent control profile} or {\it MFG system derived profile}:
\begin{align*}
         \bar\Gamma:=(\bar\gamma)_{i\in [N]} \quad \text{ where } \quad \bar\gamma := [\gamma^*_y(x, \Delta_x \bar u)]_{x,y\in [d]},
\end{align*}
achieves an $\mathcal{O}(N^{-1/2})$-Nash equilibrium and that the costs to the players under this profile are asymptotically $\bar\varrho$:
\begin{align}\notag
    |J^{i,N} (\boldsymbol{X}^N_0, \bar \Gamma) - \bar \varrho| \leq \frac{C}{\sqrt{N}}.
    \end{align} 
Also, if $(X^{i,N}_0)_{i\in [n]}$ are exchangeable, though not necessarily independent, with $X^{i,N}_0 \sim \bar\mu$ for all $i\in [N]$, then:
    \begin{align}\label{eq:barGamma}
        \sup_{t\in[0,\infty)^{d-1}]} \mathbb{E} |\mu^{\bar \Gamma}_t - \bar \mu| \leq \frac{C}{\sqrt{N}},
    \end{align}
where $\mu^{\bar \Gamma}_t$ is the empirical distribution of the players at time $t$ under the profile $\bar\Gamma$. Note that we are comparing the empirical distribution at time $t$ to the stationary distribution in the MFG, given in \eqref{eq:statErgMFG}.

Additionally, \cite{coh-zel2022} considers also the {\it mass-dependent controls} or {\it master equation derived profile}: 
\begin{align*}
         \Gamma^0:=(\Gamma^0)_{i\in [N]} \quad \text{ where } \quad \Gamma^0 := [\gamma^*_y(x, \Delta_x U^0(\cdot,\mu_t^{\Gamma^0}))]_{x,y\in [d]},
\end{align*}
where $(\mu_t^{\Gamma^0})_{t\ge0}$ is the empirical distribution process under this profile.  In \cite[Theorem 2]{coh-zel2022} it is shown that this profile 
achieves an $\mathcal{O}(N^{-1/2})$-Nash equilibrium and that the costs to the players under this profile are asymptotically $\bar\varrho$:
\begin{align}\notag
    |J^{i,N} (\boldsymbol{X}^N_0,  \Gamma^0) - \bar \varrho| \leq \frac{C}{\sqrt{N}}.
    \end{align} 
Also, if $(X^{i,N}_0)_{i\in [n]}$ are exchangeable, though not necessarily independent, with $X^{i,N}_0 \sim \mu$ for all $i\in [N]$, then:
    \begin{align*}
        \lim_{t\to\infty} \mathbb{E} |\mu^{ \Gamma^0}_t - \bar \mu| \leq \frac{C}{\sqrt{N}}.
    \end{align*}
Note that this result is slightly different from \eqref{eq:barGamma}. Here we compare the {\it limit} of the empirical distribution to the stationary one, and we allow for initial distributions other than the stationary distribution. Next, we demonstrate how our main result, \Cref{thm:mainRes}, strengthens these findings in both regimes.

\subsubsection{Uniform in Time Convergence of the Empirical Distributions}

We present the following theorem for the mass-dependent controls. The results for the case with independent controls are simpler to prove, as the players' dynamics are independent.

\begin{thm}
	Assuming \textbf{(A1)}, \textbf{(A2)}, and \textbf{(A3)}, the $N$-player system with players using the master equation of the MFG derived strategies satisfies the assumptions of \cref{thm:mainRes}, and so, taking $m(t; \mu)$ to be the solution to \eqref{eq:kolmogorov} with $\alpha_{xy}(\mu) = \gamma^*_y(x, \Delta_x U^0(\cdot, \mu))$, we have that the empirical distributions of the $N$-player games converge weakly to $m(t; \mu)$ at rate $1/N$.
\end{thm}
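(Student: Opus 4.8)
The plan is to check that \cref{thm:mainRes} applies to the controlled $N$-player dynamics, which under the master-equation profile $\Gamma^0$ are exactly the system \eqref{eq:markov} with transition-rate function
$\alpha_{xy}(\mu) \defeq \gamma^*_y\bigl(x, \Delta_x U^0(\cdot, \mu)\bigr)$.
Since the conclusion of \cref{thm:mainRes} is precisely the weak convergence \eqref{eq:mainResIntro} at rate $1/N$ for every $\Phi$ in \textbf{(D-Lip)}, everything reduces to verifying its two hypotheses for this $\alpha$: that $\alpha$ lies in \textbf{(D-Lip)}, and that $\alpha$ satisfies \textbf{(Erg)} (the initial states being taken i.i.d.\ with common law $\mu$, as \cref{thm:mainRes} requires). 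Note first that by \textbf{(A1)} one has $\alpha_{xy}(\mu)\in[L,U]$ for $x\neq y$, so $\alpha$ is a continuous, bounded, admissible rate function and the standing assumptions of \Cref{subsec:model} hold; in particular the rates are bounded away from zero.

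\emph{Step 1: $\alpha\in$ \textbf{(D-Lip)}.} Write $\alpha_{xy}$ as the composition of the linear map $\Delta_x$, the measure map $\mu\mapsto U^0(\cdot,\mu)$, and $\gamma^* = D_pH$. By \textbf{(A3)}, $\gamma^*$ is Lipschitz in $p$ with Lipschitz gradient (and $D^2_{pp}H\leq -C_{2,H}$). By the regularity theory for the ergodic master equation \eqref{eq:ergMaster} developed in \cite{coh-zel2022}, the potential $U^0(x,\cdot)$ is Lipschitz on $S_d$ with a linear functional derivative $\tfrac{\delta U^0}{\delta\frm}(x,\cdot,\cdot)$ that is itself Lipschitz in the measure argument, uniformly. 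Composing a \textbf{(D-Lip)}-valued map with the $C^{1,1}$ map $\gamma^*$ via the chain rule for functional derivatives (\cref{prop:chainRule}) preserves these properties, so $\alpha\in$ \textbf{(D-Lip)}.

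\emph{Step 2: \textbf{(Erg)} via \textbf{(Lin-Erg)}.} By \cref{prop:mConv} it suffices to establish \textbf{(Lin-Erg)}. Exactly as in the argument of \Cref{subsec:nonlinearMarkov}, the linearized Kolmogorov equation for this $\alpha$ can be written
\begin{equation*}
	\frac{d}{dt}q(t) = q(t)A(t,\mu) + r(t), \qquad A_{xy}(t,\mu) \defeq m(t;\mu)\cdot\frac{\delta\alpha_{\cdot,y}}{\delta\frm}\bigl(m(t;\mu),x\bigr) + \alpha_{xy}\bigl(m(t;\mu)\bigr),
\end{equation*}
and by \cite[Lemma 3.3]{coh-zel2022} it is enough to show that $A(t,\mu)$ has strictly positive off-diagonal entries, uniformly in $t$ and $\mu$ (this is also condition (2) of the proposition in \Cref{subsec:nonlinearMarkov}). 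For $x\neq y$ the term $\alpha_{xy}(m(t;\mu))\geq L>0$ by \textbf{(A1)}, so it suffices to bound the feedback correction $m(t;\mu)\cdot\tfrac{\delta\alpha_{\cdot,y}}{\delta\frm}(m(t;\mu),x)$ below by a quantity strictly greater than $-L$; this is where the Lasry--Lions monotonicity of $F$ in \textbf{(A2)} and the strong concavity of $H$ in \textbf{(A3)} enter, through the monotonicity estimates on $U^0$ and its functional derivative established in \cite{coh-zel2022} in the course of proving uniqueness for \eqref{eq:ergMaster}. Granting this, $A(t,\mu)$ has strictly positive off-diagonal entries, so \textbf{(Lin-Erg)} holds and hence, by \cref{prop:mConv}, so does \textbf{(Erg)}.

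With Steps 1 and 2 in hand, \cref{thm:mainRes} applies verbatim and gives $\sup_{t\geq0}\bigl\lvert\EE[\Phi(\mu^{\Gamma^0}_t)] - \Phi(m(t;\mu))\bigr\rvert\leq C/N$ for every $\Phi$ in \textbf{(D-Lip)}, which is the asserted uniform-in-time weak convergence of the empirical distributions at rate $1/N$. The genuine obstacle is Step 2: one must extract from \cite{coh-zel2022} a \emph{quantitative, global-in-$\mu$} lower bound on the feedback correction $m\cdot\tfrac{\delta\alpha_{\cdot,y}}{\delta\frm}(m,x)$, showing that the sign supplied by monotonicity is strong enough that this correction cannot overwhelm the rate floor $L$ from \textbf{(A1)}, and that the relevant derivative estimates on $U^0$ hold uniformly on the compact simplex. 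The \textbf{(D-Lip)} verification in Step 1 is comparatively routine once the master-equation regularity of \cite{coh-zel2022} is invoked. (The case of the independent-control profile $\bar\Gamma$ is strictly easier, since there the per-particle dynamics are i.i.d.\ linear Markov chains, so the analogous check of \cref{thm:mainRes}'s hypotheses does not involve the feedback term at all.)
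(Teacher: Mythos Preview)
Your Step 1 is fine and mirrors what the paper takes for granted: the regularity of $U^0$ from \cite{coh-zel2022}, together with \textbf{(A3)} on $\gamma^*=D_pH$, does put $\alpha$ in \textbf{(D-Lip)} via the chain rule.

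Step 2, however, contains a real gap, and the paper does \emph{not} go through \textbf{(Lin-Erg)} here. You want the off-diagonal entries of
\[
A_{xy}(t,\mu)=\sum_{z}m_z(t;\mu)\,\nabla_p\gamma^*_y\bigl(z,\Delta_zU^0(\cdot,m(t;\mu))\bigr)\cdot\Delta_z\tfrac{\delta U^0}{\delta\frm}(\cdot,m(t;\mu),x)+\alpha_{xy}(m(t;\mu))
\]
to be strictly positive. Lasry--Lions monotonicity is an \emph{integrated} condition on $F$; it does not, under \textbf{(A1)}--\textbf{(A3)} alone, produce a pointwise lower bound on the feedback correction that beats $-L$ for every $x\neq y$ and every $\mu$. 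You acknowledge this yourself (``Granting this\ldots''), but nothing in \cite{coh-zel2022} supplies such a bound, and the sufficient conditions of \Cref{subsec:nonlinearMarkov} (small Lipschitz constant, or the explicit inequality) are not implied by \textbf{(A1)}--\textbf{(A3)}. So the route via off-diagonal positivity of $A$ is, as stated, unproven.

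The paper instead verifies \textbf{(Erg)} \emph{directly}, bypassing \textbf{(Lin-Erg)} entirely. It first identifies $(\bar\varrho,U^0(\cdot,m(t;\mu)),m(t;\mu))$ as a solution of the time-dependent ergodic MFG system \eqref{eq:ergMFG}. It then works with the \emph{discounted} MFG system \eqref{eq:discMFG}, for which \cite{coh-zel2022} (Prop.~2.2(iii), Prop.~5.1, Cor.~7.1, Lem.~7.1) already provides, for small $r>0$ and uniformly in $r$, both the exponential contraction $\lvert m^r(t;\mu)-m^r(t;\hat\mu)\rvert\leq Ce^{-\lambda t}\lvert\mu-\hat\mu\rvert$ and the analogous exponential bound for $\tfrac{\delta m^r}{\delta\frm}$, obtained by identifying the latter with the $q$-component of the linearized \emph{forward--backward} system \eqref{eq:discLinSystem}. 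Monotonicity enters there, inside the coupled forward--backward estimates of \cite{coh-zel2022}, not as a pointwise sign on a single generator. A vanishing-discount argument ($U^{r_n}\to U^0$, hence $\alpha^{r_n}\to\alpha$ uniformly, then Gronwall) transfers both bounds to $m(t;\mu)$ and $\tfrac{\delta m}{\delta\frm}$, yielding \textbf{(Erg)}.

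In short: your reduction to \cref{thm:mainRes} is right, and Step 1 is right, but the mechanism you propose for \textbf{(Erg)} does not go through; the correct argument pulls the needed exponential stability from the discounted MFG theory and passes to the limit $r\downarrow0$.
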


\begin{proof}

    As the regularity assumptions on $\alpha$ are included in \textbf{(A2)}, all that remains to apply \cref{thm:mainRes} is to show \textbf{(Erg)}, which we do in three steps.
    In the first, we provide the time-dependent ergodic MFG system that includes
    \eqref{eq:kolmogorov} with $\alpha$ as defined in the statement of the theorem.
    We then establish the necessary regularity and exponential decay of the measure flow for a relevant discounted MFG system.
    Finally, we use a vanishing discount argument to transfer these properties to the solution of the ergodic MFG system, establishing \textbf{(Erg)}.
    
    \vspace{5pt}
    \noindent\textbf{Step 1:}
    We introduce the \textit{time-dependent ergodic MFG system}, given in \cite{coh-zel24}, which is given by the following system of equations:
    \begin{align}\label{eq:ergMFG}
    	\begin{cases}
    		-\frac{d}{dt}\check{u}(t, x) + \check{\varrho} = H(x, \Delta_x \check{u}(t, \cdot)) + F(x, \check{\mu}), \\
    		\frac{d}{dt}\check{\mu}(t, x) = \sum_{y \in [d]} \check{\mu}_y \gamma^*_x(y, \Delta_y \check{u}(t, \cdot)),\\
    		\check{\mu}(0) = \mu\in S^d.
    	\end{cases}
    \end{align}
    A solution of \eqref{eq:ergMFG} is a triple $(\check{\rho}, \check{u}, \check{\mu})$ with $\check\varrho\in\mathbb{R}$, $\check{u}:[0,\infty)\times[d]\to\mathbb{R}$, and $\check\mu:[0,\infty)\times[d]\to S_d$.\footnote{Note that \eqref{eq:statErgMFG} is also \eqref{eq:ergMFG} with $\mu = \overline{\mu}$.}
	
    Taking $u(t, x) \defeq U^0(x, m(t; \mu))$, we will first show that $(\bar \varrho, u, m(t; \mu))$ solves \eqref{eq:ergMFG}.
	By \eqref{eq:chain1} and \eqref{eq:derRel1}, we have that 
	\begin{align*}
		\frac{d}{dt} u(t,x) &= \sum_{z \in [d]} D^\frm_{1z}U^0(x, m(t; \mu)) \frac{d}{dt} m_z(t; \mu) \\
							&= \sum_{z \in [d]} D^\frm_{1z}U^0(x, m(t; \mu)) \sum_{y \in [d]} m_y(t; \mu) \gamma^*_z(y, \Delta_y U^0(\cdot, m(t; \mu))) \\
							&= \sum_{y, z \in [d]} m_y(t; \mu) D^\frm_{y 1}U^0(x, m(t; \mu)) \gamma^*_z(y, \Delta_y U^0(\cdot, m(t; \mu))) \\
							&\qquad + \sum_{y, z \in [d]} m_y(t; \mu) D^\frm_{1z}U^0(x, m(t; \mu)) \gamma^*_z(y, \Delta_y U^0(\cdot, m(t; \mu))),
	\end{align*}
	where the last equality follows from the fact that $\gamma^*_z$ takes values in $\RR^d_0$.
	Using that for any differentiable function $F \from S_d \to \RR$ we have $D^\frm_{yz}F = D^\frm_{y 1}F + D^\frm_{1z}F$, we can rewrite this as
	\begin{align*}
		\frac{d}{dt} u(t,x) &= \sum_{y, z \in [d]} m_y(t; \mu) D^\frm_{yz}U^0(x, m(t; \mu)) \gamma^*_z(y, \Delta_y U^0(\cdot, m(t; \mu))) \\
							&= \bar\varrho - H(x, \Delta_x U^0(\cdot, m(t; \mu))) - F(x, m(t; \mu)),
	\end{align*}
	where we have used \eqref{eq:ergMaster} in the last step.
	This is exactly the first equation in \eqref{eq:ergMFG}, and so we have that $(\bar\varrho, u, m(t; \mu))$ solves \eqref{eq:ergMFG}.

    \vspace{5pt}
    \noindent\textbf{Step 2:} 
    We now prove regularity results for solutions of the \textit{time-dependent discounted MFG system}:
    \begin{align}\label{eq:discMFG}
    	\begin{cases}
    		-\frac{d}{dt}u^r(t, x) = -r u^r(t, x) + H(x, \Delta_x u^r(t, \cdot)) + F(x, \mu^r)), \\
    		\frac{d}{dt}\mu^r(t, x) = \sum_{y \in [d]} \mu^r_y \gamma^*_x(y, \Delta_y u^r(t, \cdot)), \\
    		\mu^r(0) = \mu,
    	\end{cases}
    \end{align}
    which additionally requires the \textit{discounted master equation of the MFG}:
    \begin{equation}\label{eq:discMaster}
    	rU^r(x, \mu) = H(x, \Delta_x U^r(\cdot, \mu)) + F(x, \mu) + \sum_{y, z \in [d]} \mu_y D^\frm_{yz} U^r(x, \mu) \gamma^*_z(y, \Delta_y U^r(\cdot, \mu)),
    \end{equation}
    with $r>0$, both of which were studied in \cite{coh-zel2022}. 
    Taking, as in the ergodic case, $\Tilde{u}^r(t, x) \defeq U^r(x, m^r(t; \mu))$, where $m^r(t; \mu)$ is a solution to \eqref{eq:kolmogorov} with $\alpha^r_{xy}(\mu) = \gamma^*_y(x, \Delta_x U^r(\cdot, \mu))$, it follows by the same argument as in \textbf{Step 1} that $(\Tilde{u}^r, m^r)$ is a solution to \eqref{eq:discMFG} with the initial condition $m^r(0; \mu) = \mu$.
    By \cite[Prop. 2.2 (iii)]{coh-zel2022}, there exists $r_0 > 0$ such that for all discount factors $r \in (0, r_0)$ the solutions $(u^r, m^r(t; \mu))$ and $(\hat{u}^r, m^r(t; \hat{\mu}))$ to the discounted MFG system \eqref{eq:discMFG} satisfy
	\begin{equation}\label{eq:discExpStab}
		\lvert m^r(t; \mu) - m^r(t; \hat{\mu}) \rvert \leq C e^{-\lambda t} \lvert \mu - \hat{\mu} \rvert.
	\end{equation}
    We want to establish a similar result to the above for the derivatives of $m^r$ with respect to $\mu$.
    Precisely, we will show that
    \begin{equation}\label{eq:discDerExpStab}
        \left\lvert \frac{\delta m^r}{\delta \frm}(t, \hat{\mu}, z) - \frac{\delta m^r}{\delta \frm}(t, \mu, z) \right\rvert \leq C e^{-\theta t} \lvert \hat{\mu} - \mu \rvert.
    \end{equation}
    Our argument requires a linearized system around $(u^r, m^r(t; \mu))$ given by
    \begin{align}\label{eq:discLinSystem}
    \begin{cases}
        &-\frac{d}{dt}v_x(t) = - rv_x(t) + \gamma^*(x, \Delta_x u^r(t)) \cdot \Delta_x v(t) + D^\frm_1 F(x, m^r(t; \mu)) \cdot q(t), \\
        &\frac{d}{dt} q_x(t) = \sum_{y \in [d]} q_y(t) \gamma^*_x(y, \Delta_y u^r(t)) + \sum_{y \in [d]} m^r_y(t; \mu) \nabla_p \gamma^*_x(y, \Delta_y u^r(t)) \cdot \Delta_y v(t), \\
        &q(0) = q_0 \in \RR^d_0, \qquad v \text{ is bounded}.
    \end{cases}
    \end{align}

    By \cite[Prop. 5.1]{coh-zel2022} the system above has a solution $(v, q)$ for $r$ sufficiently small. Also, by \cite[Cor. 7.1]{coh-zel2022} there exist constants $C, \theta > 0$ independent of $q_0$, $t$, and $r$, such that
    \begin{equation}\label{eq:qDecay}
        \lvert q(t) \rvert + \lvert \Delta v(t) \rvert + \lvert v(t) \rvert \leq C e^{-\theta t} \lvert q_0 \rvert.
    \end{equation}

    If we can show that
    \begin{equation}\label{eq:linComp}
        q(t) = \frac{\delta m^r}{\delta \frm}(t, \mu, z)
    \end{equation}
    for the appropriate choice of $q_0$, then \eqref{eq:qDecay} will give us the desired exponential stability \eqref{eq:discDerExpStab}, since for $(v, q), (\hat{v}, \hat{q})$ solutions of \eqref{eq:discLinSystem} with initial conditions $q_0$, and $\hat{q}_0$ respectively, we have that $(v - \hat{v}, q - \hat{q})$ solves \eqref{eq:discLinSystem} with the initial condition $q_0 - \hat{q}_0$.

    By \cite[Lem. 7.1]{coh-zel2022}, we know that for $(u^r, m^r(t; \mu))$ and $(\hat{u}^r, m^r(t; \hat{\mu}))$ solutions of \eqref{eq:discMFG} and $(v, q)$ the solution of \eqref{eq:discLinSystem} with $q_0 = \mu - \hat{\mu}$, we have
    \begin{equation*}
        \sup_{t \in [0, \infty)} \lvert m^r(t, \hat{\mu}) - m^r(t; \mu) - q(t) \rvert \leq C \lvert \mu - \hat{\mu} \rvert^2.
    \end{equation*}
    We use this to show agreement in \eqref{eq:linComp}. 
    Thus, for each $z \in [d]$, we let $(v^{(z)},q^{(z)})$ be the solution to \eqref{eq:discLinSystem} with $q^{(z)}(0) = \delta_z - \mu$ and $(v^{(z)}_{\varepsilon}, q^{(z)}_{\varepsilon})$ be the solution to \eqref{eq:discLinSystem} with $q^{(z)}_{\varepsilon}(0) = \varepsilon(\delta_z - \mu)$, and obtain
    \begin{equation*}
        \frac{\delta m^r}{\delta \frm}(t, \mu, z) = \lim_{\varepsilon \to 0} \frac{m^r(t; (1-\varepsilon)\mu + \varepsilon \delta_z) - m^r(t; \mu)}{\varepsilon} = \lim_{\varepsilon \to 0} \frac{q^{(z)}_{\varepsilon}(t)}{\varepsilon}.
    \end{equation*}
    It is straightforward to see that in fact $q^{(z)}_{\varepsilon}(t) = \varepsilon q^{(z)}(t)$, so we obtain
    \begin{equation*}
        \frac{\delta m^r}{\delta \frm}(t, \mu, z) = q_z(t),
    \end{equation*}
    showing the agreement between the two forms of linearized system, and by \eqref{eq:qDecay} that we have \eqref{eq:discDerExpStab}.

    \vspace{5pt}
    \noindent\textbf{Step 3:} 
    We now show by a vanishing discount argument that these results transfer to $m(t; \mu)$.

    By the proof of Theorem 2.1 (i) in \cite{coh-zel2022}, we know that there exists a sequence of discount factors $(r_n)_n$ converging to $0$ such that
    \begin{equation*}
        U^{r_n}(x, \cdot) - U^{r_n}(1, \cdot) \to U^0(x, \cdot)
    \end{equation*}
    and
    \begin{equation*}
        D^\frm U^{r_n}(x, \cdot) \to D^\frm U^0(x \cdot)
    \end{equation*}
    both uniformly in the measure argument.
    We claim that this shows convergence of $m^{r_n}(t; \mu)$ to $m(t; \mu)$ pointwise in $t$ and uniformly in $\mu$.
    Defining $\alpha^{r_n}_{xy}(\mu) \defeq \gamma^*_y(x, \Delta_x U^{r_n}(\cdot, \mu))$ and fixing $t \geq 0$, we have that
    \begin{align*}
        \lvert m^{r_n}_x(t; \mu) - m_x(t; \mu) \rvert &\leq \int_0^t \bigg\lvert \sum_{y \in [d]} \Big[ m^{r_n}_y(s; \mu) \alpha^{r_n}_{yx}(m^{r_n}(s; \mu)) - m_y(t; \mu) \alpha_{yx}(m(t; \mu)) \Big] \bigg\rvert ds \\
        & \leq \int_0^t \bigg\{\bigg\lvert \sum_{y \in [d]} m^{r_n}_y(s; \mu) \alpha^{r_n}_{yx}(m^{r_n}(s; \mu)) - m^{r_n}_y(s; \mu) \alpha_{yx}(m^{r_n}(s; \mu)) \bigg\rvert \\
        &\qquad + \bigg\lvert \sum_{y \in [d]} m^{r_n}_y(s; \mu) \alpha_{yx}(m^{r_n}(s; \mu)) - m_y(t; \mu) \alpha_{yx}(m(t; \mu)) \bigg\rvert\bigg\} ds.
    \end{align*}
    The Lipschitz continuity of $\alpha$ gives that the second term in the integral is bounded by $C \lvert m^{r_n}(s; \mu) - m(s; \mu) \rvert$ where $C$ is a constant independent of $\mu$, $s$, $t$ and $r_n$.
    Then the uniform convergence of $\Delta_y U^{r_n}$ to $\Delta_y U^0$, together with the Lipschitz continuity of both $\alpha^{r_n}$ and $\alpha$, gives that there exists $N$ such that for all $n \geq N$, the first term is bounded by $\varepsilon$.
    Thus we obtain that, for $n \geq N$,
    \begin{equation*}
        \lvert m^{r_n}_x(t; \mu) - m_x(t; \mu) \rvert \leq t\varepsilon + \int_0^t C \lvert m^{r_n}_x(s; \mu) - m_x(s; \mu) \rvert ds.
    \end{equation*}
    Applying Gronwall's inequality, we get that
    \begin{equation*}
        \lvert m^{r_n}_x(t; \mu) - m_x(t; \mu) \rvert \leq t\varepsilon + \int_0^t s \varepsilon C e^{C(t-s)} ds,
    \end{equation*}
    which shows that for any fixed $t$, $m^{r_n}(t; \mu)$ converges uniformly to $m(t; \mu)$.
    Thus we have by \eqref{eq:discExpStab} that $m(t; \mu)$ satisfies \eqref{eq:convergence}.

    Applying the same procedure to the linearized equation satisfied by $\frac{\delta m^{r_n}}{\delta \frm}$ shows that it converges pointwise in $t$ and uniformly in $\mu$ to $\frac{\delta m}{\delta \frm}$ and so by \eqref{eq:discDerExpStab}, we get that $\frac{\delta m}{\delta \frm}$ satisfies \eqref{eq:derLip}.
    Thus, we have shown that the mass-dependent controls satisfy \textbf{(Erg)} and so we may apply \cref{thm:mainRes}.

\end{proof}

\appendix
\section{Chain Rule for Functional Derivatives}\label{sec:chainRule}

\begin{prop}\label{prop:chainRule}
	For functions $F \from S_d \to \RR$, $\varphi \from \RR \to S_d$, and $\psi \from S_d \to S_d$, each everywhere functional differentiable, we have
	\begin{equation}\label{eq:chain1}
		\frac{d}{dt} (F\circ\varphi)(t) = \frac{\delta F}{\delta \frm}(\varphi(t), \cdot) \cdot \frac{d}{dt} \varphi(t),\qquad t\in\RR,
	\end{equation}
	and
	\begin{equation}\label{eq:chain2}
		\frac{\delta (F\circ\psi)}{\delta \frm}(\mu, z) = \frac{\delta F}{\delta \frm}(\psi(\mu), \cdot) \cdot \frac{\delta \psi}{\delta \frm}(m, z),\qquad (\mu,t)\in S_d\times\RR.
	\end{equation}
\end{prop}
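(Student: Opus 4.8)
\emph{Proof proposal.} The plan is to reduce both identities to a single one-variable differentiation lemma and then translate between the two notions of derivative using \eqref{eq:derRel0}. The lemma I would isolate is: if $G\from S_d\to\RR$ is functional differentiable at $\nu\in S_d$, and $(v_\varepsilon)_\varepsilon\subset\RR^d_0$ is a family with $v_\varepsilon\to v$ as $\varepsilon\to 0$ and $\nu+\varepsilon v_\varepsilon\in S_d$ for all $\varepsilon$ close to $0$, then
\[
\lim_{\varepsilon\to0}\frac{G(\nu+\varepsilon v_\varepsilon)-G(\nu)}{\varepsilon}
=\sum_{z\in[d]}\frac{\delta G}{\delta\frm}(\nu,z)\,v_z
= D^\frm_1 G(\nu)\cdot v .
\]
To prove this I would apply the fundamental theorem of calculus \eqref{eq:differenceIntegral} with $\mu_0=\nu$ and $\mu_1=\nu+\varepsilon v_\varepsilon$, divide by $\varepsilon$, and pass to the limit: the point $(1-\zeta)\mu_0+\zeta\mu_1=\nu+\zeta\varepsilon v_\varepsilon$ converges to $\nu$ uniformly in $\zeta\in[0,1]$ and $v_\varepsilon\to v$, so the $\zeta$-integrand converges to $\sum_z\frac{\delta G}{\delta\frm}(\nu,z)v_z$. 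The final equality is \eqref{eq:derRel0} together with $\sum_z v_z=0$: writing $\frac{\delta G}{\delta\frm}(\nu,z)=D^\frm_1 G(\nu)\cdot(\delta_z-\nu)$ and summing against $v_z$, the $\nu$-term drops because $\sum_z v_z=0$.

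Granting the lemma, \eqref{eq:chain1} is immediate: take $G=F$, $\nu=\varphi(t)$ and $v_h=h^{-1}(\varphi(t+h)-\varphi(t))$, which lies in $\RR^d_0$, satisfies $\varphi(t)+hv_h=\varphi(t+h)\in S_d$, and converges to $\frac{d}{dt}\varphi(t)$ since $\varphi$ is differentiable at $t$; the lemma gives $\frac{d}{dt}(F\circ\varphi)(t)=\sum_z\frac{\delta F}{\delta\frm}(\varphi(t),z)\frac{d}{dt}\varphi_z(t)$. For \eqref{eq:chain2}, fix $\mu\in S_d$ and $z\in[d]$ and expand $\psi$ componentwise along the segment $\varepsilon\mapsto(1-\varepsilon)\mu+\varepsilon\delta_z$: by definition of the functional derivative, setting $w_\varepsilon=\varepsilon^{-1}\big(\psi((1-\varepsilon)\mu+\varepsilon\delta_z)-\psi(\mu)\big)$ for $\varepsilon>0$ small, one has $\psi((1-\varepsilon)\mu+\varepsilon\delta_z)=\psi(\mu)+\varepsilon w_\varepsilon$ with $w_\varepsilon\to\frac{\delta\psi}{\delta\frm}(\mu,z)$. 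Since $\psi$ is $S_d$-valued, $\sum_i(w_\varepsilon)_i=0$, so $w_\varepsilon\in\RR^d_0$, hence $\frac{\delta\psi}{\delta\frm}(\mu,z)\in\RR^d_0$, and $\psi(\mu)+\varepsilon w_\varepsilon\in S_d$. Applying the lemma with $G=F$, $\nu=\psi(\mu)$, $v_\varepsilon=w_\varepsilon$ yields
\[
\frac{\delta(F\circ\psi)}{\delta\frm}(\mu,z)=\lim_{\varepsilon\to0^+}\frac{F\big(\psi(\mu)+\varepsilon w_\varepsilon\big)-F(\psi(\mu))}{\varepsilon}=\sum_{y\in[d]}\frac{\delta F}{\delta\frm}(\psi(\mu),y)\Big(\frac{\delta\psi}{\delta\frm}(\mu,z)\Big)_y,
\]
which is exactly \eqref{eq:chain2}.

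The main obstacle is the lemma, and within it the step passing the $\zeta$-integrand to the limit: this needs $\mu\mapsto\frac{\delta F}{\delta\frm}(\mu,z)$ to be continuous (equivalently, that \eqref{eq:differenceIntegral} is genuinely available, not merely the bare existence of the radial limit \eqref{eq:linFuncDer}). Put differently, \eqref{eq:linFuncDer} is a priori only a radial, one-sided directional derivative, whereas the chain rule needs $F$ to be differentiable along \emph{arbitrary} perturbations $\nu+\varepsilon v_\varepsilon$ with $v_\varepsilon$ near a general direction $v\in\RR^d_0$. In the finite-dimensional setting here this is painless: via the affine isomorphism $\pi\from S_d\to\widetilde S_d$ of the notation section, ``functional differentiable'' corresponds to ordinary differentiability of $\widetilde F=F\circ\pi^{-1}$ on $\widetilde S_d$, and the two claimed identities are then just the classical chain rule in $\RR^{d-1}$ rewritten in the original coordinates through \eqref{eq:derRel0}--\eqref{eq:derRel2}; moreover, in every application of \cref{prop:chainRule} in the paper the relevant functional derivatives are Lipschitz, so the required continuity holds automatically. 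The remaining bookkeeping---checking membership in $\RR^d_0$ and the algebra linking $\frac{\delta}{\delta\frm}$, $D^\frm_1$, and $D^\frm_{yz}$---is routine.
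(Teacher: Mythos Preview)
Your proposal is correct but takes a different route from the paper. The paper does exactly what you mention in your final paragraph as the alternative: it passes to $\widetilde S_d$ via $\pi$, applies the ordinary chain rule there using the identity $\partial_{m_z}\widetilde F(\widetilde\mu)=D^\frm_{dz}F(\mu)$ (cited from Carmona--Delarue), and then translates back through \eqref{eq:derRel1} and the zero-sum constraint; for \eqref{eq:chain2} it first obtains the identity for $z\in[d-1]$ modulo an extra $z=d$ term on each side, then multiplies by $\mu_z$ and sums to isolate the $z=d$ case via \eqref{eq:funcDerNorm}, and substitutes back. Your argument is more intrinsic: you work directly on $S_d$ via a single directional-derivative lemma proved from \eqref{eq:differenceIntegral}, and both \eqref{eq:chain1} and \eqref{eq:chain2} fall out immediately without any coordinate bookkeeping or the separate $z=d$ computation. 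The tradeoff is the one you already identify: the coordinate route makes the required regularity transparent (ordinary differentiability of $\widetilde F$), whereas your lemma needs continuity of $\mu\mapsto\frac{\delta F}{\delta\frm}(\mu,\cdot)$ to pass the limit through the $\zeta$-integral. In this finite-dimensional setting the two hypotheses coincide, and in every use of the proposition in the paper the derivatives are Lipschitz, so nothing is lost either way.
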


\begin{proof}
	We first prove \eqref{eq:chain1}.
	We see that, defining $\tilde{F}$ and $\tilde{\varphi}$ as usual, we have for any $t\in\RR$,
	\begin{equation*}
		F(\varphi(t)) = \tilde{F}(\tilde{\varphi}(t)),
	\end{equation*}
	and thus
	\begin{equation*}
		\frac{d}{dt} F(\varphi(t)) = \frac{d}{dt} \tilde{F}(\tilde{\varphi}(t)).
	\end{equation*}
	By \cite[Proposition 5.66]{car-del2018a}, we have that \begin{align}\label{eq:CD5.66}
 \frac{\partial\tilde{F}}{\partial m_z} (\tilde{\mu}) = D^m_{dz}F(\mu),\qquad \tilde\mu\in\tilde S_d,
 \end{align}
 which, combined with the chain rule, gives
	\begin{align*}
		\frac{d}{dt} (F\circ\varphi)(t) &= \sum_{z \in [d-1]} \frac{\partial \tilde{F}}{\partial m_z} (\tilde{\varphi}(t)) \frac{d}{dt} \tilde{\varphi}_z(t) \\
								   &= \sum_{z \in [d-1]} \left( \frac{\delta F}{\delta \frm}(\varphi(t), z) - \frac{\delta F}{\delta \frm}(\varphi(t), d) \right) \frac{d}{dt} \varphi_z(t) \\
								   &= \sum_{z \in [d]} \frac{\delta F}{\delta \frm}(\varphi(t), z) \frac{d}{dt} \varphi_z(t),
	\end{align*}
	where the last equality follows from the fact that 
    \begin{equation*}
        \sum_{z \in [d]} \frac{d}{dt} \varphi_z(t) = 0,
    \end{equation*}
    which in turn follows since $\sum_{z\in[d]}\varphi_z(t)= 1$. This concludes the proof of \eqref{eq:chain1}.

	We now prove \eqref{eq:chain2}.
	Again, we define $\tilde{F}$ and $\tilde{\psi}$ as usual, and see that for any $\tilde\mu\in\tilde S_d$,
	\begin{equation*}
		\frac{\partial }{\partial m_z} (\tilde{F}\circ\tilde{\psi})(\tilde{\mu}) = \sum_{y \in [d-1]}  \frac{\partial \tilde{F}}{\partial m_y}  (\tilde{\psi}(\tilde{\mu})) \frac{\partial \tilde{\psi}_y}{\partial m_z} (\tilde{\mu}).
	\end{equation*}
	Applying \eqref{eq:CD5.66}, this is equal to
	\begin{equation*}
		\sum_{y \in [d-1]} D^m_{dy}F(\psi(\mu)) D^m_{dz}\psi_y(\mu),
	\end{equation*}
	which we can expand to get
	\begin{equation*}
		\sum_{y \in [d]} \frac{\delta F}{\delta \frm}(\psi(\mu), y) \left(\left( \frac{\delta \psi}{\delta \frm}(\mu, z)\right)_y - \left(\frac{\delta \psi}{\delta \frm}(\mu, d)\right)_y \right),
	\end{equation*}
	where similarly to before, we have used the fact that $\sum_{y \in [d]} \left(\frac{\delta \psi}{\delta \frm}(\mu, z)\right)_y = 0$.

	Replacing $\tilde{F}$ with $\tilde{F}\circ\tilde{\psi}$ in \eqref{eq:CD5.66}, we get that for any $z \in [d-1]$,
	\begin{equation}\label{eq:chain2proof}
		\frac{\delta (F\circ\psi)}{\delta \frm}(\mu, z) - \frac{\delta (F\circ\psi)}{\delta \frm}(\mu, d) = \frac{\delta F}{\delta \frm}(\psi(\mu), \cdot) \cdot \left( \frac{\delta \psi}{\delta \frm}(\mu, z) - \frac{\delta \psi}{\delta \frm}(\mu, d) \right).
	\end{equation}
	Next, we multiply both sides by $\mu_z$ and sum over $z$ to get
	\begin{equation*}
		-\frac{\delta (F\circ\psi)}{\delta \frm}(\mu, d)\mu_d + (\mu_d - 1) \frac{\delta (F\circ\psi)}{\delta \frm}(\mu, d) = \frac{\delta F}{\delta \frm}(\psi(\mu), \cdot) \cdot \left( - \frac{\delta \psi}{\delta \frm}(\mu, d)\mu_d + (\mu_d - 1)\frac{\delta \psi}{\delta \frm}(\mu, d) \right).
	\end{equation*}
	Simplifying, this is equal to
	\begin{equation*}
		\frac{\delta (F\circ\psi)}{\delta \frm}(\mu, d) = \frac{\delta F}{\delta \frm}(\psi(\mu), \cdot) \cdot \frac{\delta \psi}{\delta \frm}(\mu, d).
	\end{equation*}
	We then substitute this back into \eqref{eq:chain2proof} to obtain \eqref{eq:chain2}.
\end{proof}

{\bf Acknowledgment.}  We thank the anonymous Associate Editor and the anonymous reviewers for their valuable reports and comments which helped to improve the quality of this paper. A. Cohen gratefully acknowledges support from NSF under grants DMS-2505998.

\footnotesize

\bibliographystyle{abbrv} 
\bibliography{references} 

@article {bar-pri,
    AUTHOR = {Bardi, Martino and Priuli, Fabio S.},
     TITLE = {Linear-quadratic {$N$}-person and mean-field games with
              ergodic cost},
   JOURNAL = {SIAM J. Control Optim.},
  FJOURNAL = {SIAM Journal on Control and Optimization},
    VOLUME = {52},
      YEAR = {2014},
    NUMBER = {5},
     PAGES = {3022--3052},
      ISSN = {0363-0129,1095-7138},
   MRCLASS = {49N70 (91A10 91A13)},
  MRNUMBER = {3264562},
MRREVIEWER = {Ivan\ Matychyn},
       DOI = {10.1137/140951795},
       URL = {https://doi.org/10.1137/140951795},
}

@book{kolokoltsov2010nonlinear,
    AUTHOR = {Kolokoltsov, Vassili N.},
     TITLE = {Nonlinear {M}arkov processes and kinetic equations},
    SERIES = {Cambridge Tracts in Mathematics},
    VOLUME = {182},
 PUBLISHER = {Cambridge University Press, Cambridge},
      YEAR = {2010},
     PAGES = {xviii+375},
      ISBN = {978-0-521-11184-3},
   MRCLASS = {60J25 (47D07 47H20 82C31 82C40)},
  MRNUMBER = {2680971},
MRREVIEWER = {M.\ Iosifescu},
       DOI = {10.1017/CBO9780511760303},
       URL = {https://doi.org/10.1017/CBO9780511760303},
}

@article {bor-son2012,
    AUTHOR = {Borkar, Vivek S. and Sundaresan, Rajesh},
     TITLE = {Asymptotics of the invariant measure in mean field models with
              jumps},
   JOURNAL = {Stoch. Syst.},
  FJOURNAL = {Stochastic Systems},
    VOLUME = {2},
      YEAR = {2012},
    NUMBER = {2},
     PAGES = {322--380},
   MRCLASS = {60K35 (60F10 68M20 92D30 94A15 94A17)},
  MRNUMBER = {3354770},
MRREVIEWER = {Ingemar Kaj},
       DOI = {10.1214/12-SSY064},
       URL = {https://doi-org.proxy.lib.umich.edu/10.1214/12-SSY064},
}

@book {fre-wen2012,
    AUTHOR = {Freidlin, Mark I. and Wentzell, Alexander D.},
     TITLE = {Random perturbations of dynamical systems},
    SERIES = {Grundlehren der mathematischen Wissenschaften [Fundamental
              Principles of Mathematical Sciences]},
    VOLUME = {260},
   EDITION = {Third},
      NOTE = {Translated from the 1979 Russian original by Joseph Sz\"{u}cs},
 PUBLISHER = {Springer, Heidelberg},
      YEAR = {2012},
     PAGES = {xxviii+458},
      ISBN = {978-3-642-25846-6},
   MRCLASS = {60H25 (34C29 34F05 60F10)},
  MRNUMBER = {2953753},
       DOI = {10.1007/978-3-642-25847-3},
       URL = {https://doi-org.proxy.lib.umich.edu/10.1007/978-3-642-25847-3},
}

@ARTICLE{coh-zel2022,
    AUTHOR = {Cohen, Asaf and Zell, Ethan},
     TITLE = {Analysis of the finite-state ergodic master equation},
   JOURNAL = {Appl. Math. Optim.},
  FJOURNAL = {Applied Mathematics and Optimization},
    VOLUME = {87},
      YEAR = {2023},
    NUMBER = {3},
     PAGES = {Paper No. 40, 53},
      ISSN = {0095-4616},
   MRCLASS = {49N80 (35M99 35Q89 60J27 91A15)},
  MRNUMBER = {4565007},
       DOI = {10.1007/s00245-022-09954-0},
       URL = {https://doi.org/10.1007/s00245-022-09954-0},
}

@book {car-del2018a,
    AUTHOR = {Carmona, Ren\'{e} and Delarue, Fran\c{c}ois},
     TITLE = {Probabilistic theory of mean field games with applications.
              {I}},
    SERIES = {Probability Theory and Stochastic Modelling},
    VOLUME = {83},
      NOTE = {Mean field FBSDEs, control, and games},
 PUBLISHER = {Springer, Cham},
      YEAR = {2018},
     PAGES = {xxv+713},
      ISBN = {978-3-319-56437-1; 978-3-319-58920-6},
   MRCLASS = {60-02 (35R60 49N70 49N90 60H15 60H30 91A15 93E20)},
  MRNUMBER = {3752669},
MRREVIEWER = {Vassili N. Kolokol\cprime tsov},
}

@article{del-tse2021,
    title={Uniform in time weak propagation of chaos on the torus}, 
    author={François Delarue and Alvin Tse},
    journal={Ann. Inst. Henri Poincar\'{e} Probab. Stat.},
    volume={to appear},
    year={2025},
}

@article {Ying2018,
    AUTHOR = {Ying, Lei},
     TITLE = {On the approximation error of mean-field models},
   JOURNAL = {Stoch. Syst.},
  FJOURNAL = {Stochastic Systems},
    VOLUME = {8},
      YEAR = {2018},
    NUMBER = {2},
     PAGES = {126--142},
   MRCLASS = {60J27 (60B10)},
  MRNUMBER = {3811915},
MRREVIEWER = {Adina Oprisan},
       DOI = {10.1287/stsy.2018.0012},
       URL = {https://doi-org.proxy.lib.umich.edu/10.1287/stsy.2018.0012},
}

@article{Gomes2013,
  AUTHOR = {Gomes, Diogo A. and Mohr, Joana and Souza, Rafael Rig{\~a}o},
     TITLE = {Continuous time finite state mean field games},
   JOURNAL = {Appl. Math. Optim.},
  FJOURNAL = {Applied Mathematics and Optimization. An International Journal
              with Applications to Stochastics},
    VOLUME = {68},
      YEAR = {2013},
    NUMBER = {1},
     PAGES = {99--143},
      ISSN = {0095-4616},
   MRCLASS = {91A06 (34A12 91A10 91A13 91A15 93E20)},
  MRNUMBER = {3072242},
MRREVIEWER = {Claudio Marchi},
       DOI = {10.1007/s00245-013-9202-8},
       URL = {http://dx.doi.org.proxy.lib.umich.edu/10.1007/s00245-013-9202-8},
}

@article {car-por2019,
    AUTHOR = {Cardaliaguet, Pierre and Porretta, Alessio},
     TITLE = {Long time behavior of the master equation in mean field game
              theory},
   JOURNAL = {Anal. PDE},
  FJOURNAL = {Analysis \& PDE},
    VOLUME = {12},
      YEAR = {2019},
    NUMBER = {6},
     PAGES = {1397--1453},
      ISSN = {2157-5045},
   MRCLASS = {35Q91 (35B40 35F21 35G50 37N40 91A13)},
  MRNUMBER = {3921309},
MRREVIEWER = {Abderrahmane Habbal},
       DOI = {10.2140/apde.2019.12.1397},
       URL = {https://doi-org.proxy.lib.umich.edu/10.2140/apde.2019.12.1397},
}

@article{cec-pel2019,
    AUTHOR = {Cecchin, Alekos and Pelino, Guglielmo},
     TITLE = {Convergence, fluctuations and large deviations for finite
              state mean field games via the master equation},
   JOURNAL = {Stochastic Process. Appl.},
  FJOURNAL = {Stochastic Processes and their Applications},
    VOLUME = {129},
      YEAR = {2019},
    NUMBER = {11},
     PAGES = {4510--4555},
      ISSN = {0304-4149,1879-209X},
   MRCLASS = {60F05 (60F10 60J27 60K35 91A13 91A15 93E20)},
  MRNUMBER = {4013871},
       DOI = {10.1016/j.spa.2018.12.002},
       URL = {https://doi.org/10.1016/j.spa.2018.12.002},
}

@Article{Cecchin2017,
    AUTHOR = {Cecchin, Alekos and Fischer, Markus},
     TITLE = {Probabilistic approach to finite state mean field games},
   JOURNAL = {Appl. Math. Optim.},
  FJOURNAL = {Applied Mathematics and Optimization},
    VOLUME = {81},
      YEAR = {2020},
    NUMBER = {2},
     PAGES = {253--300},
      ISSN = {0095-4616,1432-0606},
   MRCLASS = {49N80 (60J27 60K35 91A10 91A16 93E20)},
  MRNUMBER = {4082352},
MRREVIEWER = {Benjamin\ Seeger},
       DOI = {10.1007/s00245-018-9488-7},
       URL = {https://doi.org/10.1007/s00245-018-9488-7},
}

@BOOK{CardaliaguetDelarueLasryLions,
AUTHOR = {Cardaliaguet, Pierre and Delarue, Fran\c{c}ois and Lasry,
              Jean-Michel and Lions, Pierre-Louis},
     TITLE = {The master equation and the convergence problem in mean field
              games},
    SERIES = {Annals of Mathematics Studies},
    VOLUME = {201},
 PUBLISHER = {Princeton University Press, Princeton, NJ},
      YEAR = {2019},
     PAGES = {x+212},
      ISBN = {978-0-691-19071-6; 978-0-691-19070-9},
   MRCLASS = {49-02 (49N70 60H30 60K35 91A13 91A15)},
  MRNUMBER = {3967062},
       DOI = {10.2307/j.ctvckq7qf},
       URL = {https://doi.org/10.2307/j.ctvckq7qf},
}

@article {Huang2006,
    AUTHOR = {Huang, Minyi and Malham{\'e}, Roland P. and Caines, Peter E.},
     TITLE = {Large population stochastic dynamic games: Closed-loop
              {M}c{K}ean-{V}lasov systems and the {N}ash certainty
              equivalence principle},
   JOURNAL = {Commun. Inf. Syst.},
  FJOURNAL = {Communications in Information and Systems},
    VOLUME = {6},
      YEAR = {2006},
    NUMBER = {3},
     PAGES = {221--251},
      ISSN = {1526-7555},
   MRCLASS = {91A15 (49L20 91A23)},
  MRNUMBER = {2346927},
       URL = {http://projecteuclid.org.proxy.lib.umich.edu/euclid.cis/1183728987},
}

@article {Lasry2007,
    AUTHOR = {Lasry, Jean-Michel and Lions, Pierre-Louis},
     TITLE = {Mean field games},
   JOURNAL = {Jpn. J. Math.},
  FJOURNAL = {Japanese Journal of Mathematics},
    VOLUME = {2},
      YEAR = {2007},
    NUMBER = {1},
     PAGES = {229--260},
      ISSN = {0289-2316},
   MRCLASS = {91A23 (82B05 91B28)},
  MRNUMBER = {2295621},
       DOI = {10.1007/s11537-007-0657-8},
       URL = {http://dx.doi.org.proxy.lib.umich.edu/10.1007/s11537-007-0657-8},
}

@article {bay-coh2017,
    AUTHOR = {Bayraktar, Erhan and Cohen, Asaf},
     TITLE = {Analysis of a finite state many player game using its master
              equation},
   JOURNAL = {SIAM J. Control Optim.},
  FJOURNAL = {SIAM Journal on Control and Optimization},
    VOLUME = {56},
      YEAR = {2018},
    NUMBER = {5},
     PAGES = {3538--3568},
      ISSN = {0363-0129,1095-7138},
   MRCLASS = {91A06 (35F20 60F05 60J27 91A15 93E20)},
  MRNUMBER = {3860894},
       DOI = {10.1137/17M113887X},
       URL = {https://doi.org/10.1137/17M113887X},
}

@book {DZ,
    AUTHOR = {Dembo, Amir and Zeitouni, Ofer},
     TITLE = {Large deviations techniques and applications},
    SERIES = {Applications of Mathematics (New York)},
    VOLUME = {38},
   EDITION = {Second},
 PUBLISHER = {Springer-Verlag},
   ADDRESS = {New York},
      YEAR = {1998},
     PAGES = {xvi+396},
      ISBN = {0-387-98406-2},
   MRCLASS = {60F10},
  MRNUMBER = {1619036 (99d:60030)},
}

@book{lang2012real,
    AUTHOR = {Lang, Serge},
     TITLE = {Real and functional analysis},
    SERIES = {Graduate Texts in Mathematics},
    VOLUME = {142},
   EDITION = {Third},
 PUBLISHER = {Springer-Verlag, New York},
      YEAR = {1993},
     PAGES = {xiv+580},
      ISBN = {0-387-94001-4},
   MRCLASS = {00A05 (26-01 28-01 46-01 47-01 58-01)},
  MRNUMBER = {1216137},
       DOI = {10.1007/978-1-4612-0897-6},
       URL = {https://doi.org/10.1007/978-1-4612-0897-6},
}

@article{Budhiraja2015,
    AUTHOR = {Budhiraja, Amarjit and Dupuis, Paul and Fischer, Markus and
              Ramanan, Kavita},
     TITLE = {Limits of relative entropies associated with weakly
              interacting particle systems},
   JOURNAL = {Electron. J. Probab.},
  FJOURNAL = {Electronic Journal of Probability},
    VOLUME = {20},
      YEAR = {2015},
     PAGES = {no. 80, 22},
      ISSN = {1083-6489},
   MRCLASS = {60K35 (60F10 60F17)},
  MRNUMBER = {3383564},
MRREVIEWER = {Boualem\ Djehiche},
       DOI = {10.1214/EJP.v20-4003},
       URL = {https://doi.org/10.1214/EJP.v20-4003},
}

@misc{Hofer2024,
      title={Synchronization Games}, 
      author={Felix Höfer and H. Mete Soner},
      year={2024},
      eprint={2402.08842},
      archivePrefix={arXiv},
      primaryClass={math.OC},
      url={https://arxiv.org/abs/2402.08842}, 
}

@article{Butkovsky2014,
    AUTHOR = {Butkovsky, O. A.},
     TITLE = {On ergodic properties of nonlinear {M}arkov chains and
              stochastic {M}c{K}ean-{V}lasov equations},
   JOURNAL = {Theory Probab. Appl.},
  FJOURNAL = {Theory of Probability and its Applications},
    VOLUME = {58},
      YEAR = {2014},
    NUMBER = {4},
     PAGES = {661--674},
      ISSN = {0040-585X,1095-7219},
   MRCLASS = {60J10 (37A30 60B10 60H10)},
  MRNUMBER = {3403022},
       DOI = {10.1137/S0040585X97986825},
       URL = {https://doi.org/10.1137/S0040585X97986825},
}

@article{Malrieu2003,
    AUTHOR = {Malrieu, Florent},
     TITLE = {Convergence to equilibrium for granular media equations and
              their {E}uler schemes},
   JOURNAL = {Ann. Appl. Probab.},
  FJOURNAL = {The Annals of Applied Probability},
    VOLUME = {13},
      YEAR = {2003},
    NUMBER = {2},
     PAGES = {540--560},
      ISSN = {1050-5164,2168-8737},
   MRCLASS = {60K35 (82C31 82C70)},
  MRNUMBER = {1970276},
MRREVIEWER = {C\'edric\ Villani},
       DOI = {10.1214/aoap/1050689593},
       URL = {https://doi.org/10.1214/aoap/1050689593},
}

@misc{coh-zel24,
      title={Asymptotic Nash Equilibria of Finite-State Ergodic Markovian Mean Field Games}, 
      author={Asaf Cohen and Ethan Zell},
      year={2024},
      eprint={2404.11695},
      archivePrefix={arXiv},
      primaryClass={math.OC},
      url={https://arxiv.org/abs/2404.11695}, 
}

@article{Neumann2022,
    AUTHOR = {Neumann, Berenice Anne},
     TITLE = {Nonlinear {M}arkov chains with finite state space: invariant
              distributions and long-term behaviour},
   JOURNAL = {J. Appl. Probab.},
  FJOURNAL = {Journal of Applied Probability},
    VOLUME = {60},
      YEAR = {2023},
    NUMBER = {1},
     PAGES = {30--44},
      ISSN = {0021-9002,1475-6072},
   MRCLASS = {60G65},
  MRNUMBER = {4546109},
       DOI = {10.1017/jpr.2022.23},
       URL = {https://doi.org/10.1017/jpr.2022.23},
}

@article{McKean1966,
    AUTHOR = {McKean, Jr., H. P.},
     TITLE = {A class of {M}arkov processes associated with nonlinear
              parabolic equations},
   JOURNAL = {Proc. Nat. Acad. Sci. U.S.A.},
  FJOURNAL = {Proceedings of the National Academy of Sciences of the United
              States of America},
    VOLUME = {56},
      YEAR = {1966},
     PAGES = {1907--1911},
      ISSN = {0027-8424},
   MRCLASS = {60.62},
  MRNUMBER = {221595},
MRREVIEWER = {F.\ B.\ Knight},
       DOI = {10.1073/pnas.56.6.1907},
       URL = {https://doi.org/10.1073/pnas.56.6.1907},
}

@article{LackerLeFlem2023,
    AUTHOR = {Lacker, Daniel and Le Flem, Luc},
     TITLE = {Sharp uniform-in-time propagation of chaos},
   JOURNAL = {Probab. Theory Related Fields},
  FJOURNAL = {Probability Theory and Related Fields},
    VOLUME = {187},
      YEAR = {2023},
    NUMBER = {1-2},
     PAGES = {443--480},
      ISSN = {0178-8051,1432-2064},
   MRCLASS = {82C22 (60H10)},
  MRNUMBER = {4634344},
       DOI = {10.1007/s00440-023-01192-x},
       URL = {https://doi.org/10.1007/s00440-023-01192-x},
}

@article{Lacker2023,
    AUTHOR = {Lacker, Daniel},
     TITLE = {Hierarchies, entropy, and quantitative propagation of chaos
              for mean field diffusions},
   JOURNAL = {Probab. Math. Phys.},
  FJOURNAL = {Probability and Mathematical Physics},
    VOLUME = {4},
      YEAR = {2023},
    NUMBER = {2},
     PAGES = {377--432},
      ISSN = {2690-0998,2690-1005},
   MRCLASS = {82C22 (60F17 60H10)},
  MRNUMBER = {4595391},
       DOI = {10.2140/pmp.2023.4.377},
       URL = {https://doi.org/10.2140/pmp.2023.4.377},
}

@article{Saburov2016,
    AUTHOR = {Saburov, Mansoor},
     TITLE = {Ergodicity of nonlinear {M}arkov operators on the finite
              dimensional space},
   JOURNAL = {Nonlinear Anal.},
  FJOURNAL = {Nonlinear Analysis. Theory, Methods \& Applications. An
              International Multidisciplinary Journal},
    VOLUME = {143},
      YEAR = {2016},
     PAGES = {105--119},
      ISSN = {0362-546X,1873-5215},
   MRCLASS = {47H25 (37A30 47H60)},
  MRNUMBER = {3516825},
       DOI = {10.1016/j.na.2016.05.006},
       URL = {https://doi.org/10.1016/j.na.2016.05.006},
}

@article{Budhiraja2015_2,
    AUTHOR = {Budhiraja, Amarjit and Dupuis, Paul and Fischer, Markus and
              Ramanan, Kavita},
     TITLE = {Local stability of {K}olmogorov forward equations for finite
              state nonlinear {M}arkov processes},
   JOURNAL = {Electron. J. Probab.},
  FJOURNAL = {Electronic Journal of Probability},
    VOLUME = {20},
      YEAR = {2015},
     PAGES = {no. 81, 30},
      ISSN = {1083-6489},
   MRCLASS = {60K35 (34D20 60F10)},
  MRNUMBER = {3383565},
MRREVIEWER = {Boualem\ Djehiche},
       DOI = {10.1214/EJP.v20-4004},
       URL = {https://doi.org/10.1214/EJP.v20-4004},
}

@article{Malrieu2001,
    AUTHOR = {Malrieu, F.},
     TITLE = {Logarithmic {S}obolev inequalities for some nonlinear {PDE}'s},
   JOURNAL = {Stochastic Process. Appl.},
  FJOURNAL = {Stochastic Processes and their Applications},
    VOLUME = {95},
      YEAR = {2001},
    NUMBER = {1},
     PAGES = {109--132},
      ISSN = {0304-4149,1879-209X},
   MRCLASS = {60K35 (35K55 82C22)},
  MRNUMBER = {1847094},
MRREVIEWER = {Kazuaki\ Taira},
       DOI = {10.1016/S0304-4149(01)00095-3},
       URL = {https://doi.org/10.1016/S0304-4149(01)00095-3},
}

@article{Guillin2023,
    AUTHOR = {Guillin, Arnaud and Le Bris, Pierre and Monmarch\'e, Pierre},
     TITLE = {On systems of particles in singular repulsive interaction in
              dimension one: log and {R}iesz gas},
   JOURNAL = {J. \'Ec. polytech. Math.},
  FJOURNAL = {Journal de l'\'Ecole polytechnique. Math\'ematiques},
    VOLUME = {10},
      YEAR = {2023},
     PAGES = {867--916},
      ISSN = {2429-7100,2270-518X},
   MRCLASS = {60K35 (35Q82 60B20 60F15 60J60 82C22)},
  MRNUMBER = {4585552},
MRREVIEWER = {Michele\ Aleandri},
       DOI = {10.5802/jep.235},
       URL = {https://doi.org/10.5802/jep.235},
}

@article{Rosenzweig2023,
    AUTHOR = {Rosenzweig, Matthew and Serfaty, Sylvia},
     TITLE = {Global-in-time mean-field convergence for singular
              {R}iesz-type diffusive flows},
   JOURNAL = {Ann. Appl. Probab.},
  FJOURNAL = {The Annals of Applied Probability},
    VOLUME = {33},
      YEAR = {2023},
    NUMBER = {2},
     PAGES = {754--798},
      ISSN = {1050-5164,2168-8737},
   MRCLASS = {60H10 (35Q70 60H30 60K35 82C22)},
  MRNUMBER = {4564418},
MRREVIEWER = {Benjamin\ Jourdain},
       DOI = {10.1214/22-aap1833},
       URL = {https://doi.org/10.1214/22-aap1833},
}

@article{Chen2024,
    AUTHOR = {Chen, Fan and Lin, Yiqing and Ren, Zhenjie and Wang, Songbo},
     TITLE = {Uniform-in-time propagation of chaos for kinetic mean field
              {L}angevin dynamics},
   JOURNAL = {Electron. J. Probab.},
  FJOURNAL = {Electronic Journal of Probability},
    VOLUME = {29},
      YEAR = {2024},
     PAGES = {Paper No. 17, 43},
      ISSN = {1083-6489},
   MRCLASS = {60K35 (35B40 35H10 35Q83 35Q84 60J60)},
  MRNUMBER = {4701825},
       DOI = {10.1214/24-ejp1079},
       URL = {https://doi.org/10.1214/24-ejp1079},
}

@article{Durmus2020,
  title = {An elementary approach to uniform in time propagation of chaos},
  volume = {148},
  ISSN = {1088-6826},
  url = {http://dx.doi.org/10.1090/proc/14612},
  DOI = {10.1090/proc/14612},
  number = {12},
  journal = {Proceedings of the American Mathematical Society},
  publisher = {American Mathematical Society (AMS)},
  author = {Durmus,  Alain and Eberle,  Andreas and Guillin,  Arnaud and Zimmer,  Raphael},
  year = {2020},
  month = sep,
  pages = {5387–5398}
}

@article{Salem2020,
    AUTHOR = {Salem, Samir},
     TITLE = {A gradient flow approach of propagation of chaos},
   JOURNAL = {Discrete Contin. Dyn. Syst.},
  FJOURNAL = {Discrete and Continuous Dynamical Systems. Series A},
    VOLUME = {40},
      YEAR = {2020},
    NUMBER = {10},
     PAGES = {5729--5754},
      ISSN = {1078-0947,1553-5231},
   MRCLASS = {60H10 (35Q84 37A60 49Q22 60K35)},
  MRNUMBER = {4128323},
       DOI = {10.3934/dcds.2020243},
       URL = {https://doi.org/10.3934/dcds.2020243},
}

@article{Bayraktar2021,
    AUTHOR = {Bayraktar, Erhan and Cecchin, Alekos and Cohen, Asaf and
              Delarue, Francois},
     TITLE = {Finite state mean field games with {W}right-{F}isher common
              noise},
   JOURNAL = {J. Math. Pures Appl. (9)},
  FJOURNAL = {Journal de Math\'ematiques Pures et Appliqu\'ees. Neuvi\`eme
              S\'erie},
    VOLUME = {147},
      YEAR = {2021},
     PAGES = {98--162},
      ISSN = {0021-7824,1776-3371},
   MRCLASS = {91A16 (35K65 91A15)},
  MRNUMBER = {4213680},
MRREVIEWER = {Diogo\ Lu\'is Aguiar Gomes},
       DOI = {10.1016/j.matpur.2021.01.003},
       URL = {https://doi.org/10.1016/j.matpur.2021.01.003},
}

@article{Bayraktar2022,
    AUTHOR = {Bayraktar, Erhan and Cecchin, Alekos and Cohen, Asaf and
              Delarue, Francois},
     TITLE = {Finite state mean field games with {W}right-{F}isher common
              noise as limits of {$N$}-player weighted games},
   JOURNAL = {Math. Oper. Res.},
  FJOURNAL = {Mathematics of Operations Research},
    VOLUME = {47},
      YEAR = {2022},
    NUMBER = {4},
     PAGES = {2840--2890},
      ISSN = {0364-765X,1526-5471},
   MRCLASS = {91A16 (35Q91 49N80 93E20)},
  MRNUMBER = {4515487},
       DOI = {10.1287/moor.2021.1230},
       URL = {https://doi.org/10.1287/moor.2021.1230},
}

@article{Cecchin2021,
    AUTHOR = {Cecchin, Alekos and Delarue, Fran\c{c}ois},
     TITLE = {Selection by vanishing common noise for potential finite state
              mean field games},
   JOURNAL = {Comm. Partial Differential Equations},
  FJOURNAL = {Communications in Partial Differential Equations},
    VOLUME = {47},
      YEAR = {2022},
    NUMBER = {1},
     PAGES = {89--168},
      ISSN = {0360-5302,1532-4133},
   MRCLASS = {35K65 (35L40 49L25 49N80 60F05 91A16)},
  MRNUMBER = {4373880},
       DOI = {10.1080/03605302.2021.1955256},
       URL = {https://doi.org/10.1080/03605302.2021.1955256},
}

@misc{Light2023,
      title={Invariant Distributions in Nonlinear Markov Chains with Aggregators: Theory, Computation, and Applications}, 
      author={Bar Light},
      year={2024},
      eprint={2303.15898},
      archivePrefix={arXiv},
      primaryClass={math.PR},
      url={https://arxiv.org/abs/2303.15898}, 
}

@article{Cohen-Zell-Lauriere,
  author  = {Asaf Cohen and Mathieu Lauri{{\`e}}re and Ethan Zell},
  title   = {Deep Backward and Galerkin Methods for the Finite State Master Equation},
  Fjournal = {Journal of Machine Learning Research},
  journal = {J. Mach. Learn. Res.},year    = {2024},
  volume  = {25},
  number  = {401},
  pages   = {1--50},
  url     = {http://jmlr.org/papers/v25/24-0215.html}
}
\end{document}